\newtheorem{theorem}{Theorem}[section]
\newtheorem{lemma}[theorem]{Lemma}
\newtheorem{definition}[theorem]{Definition}
\newtheorem{proposition}[theorem]{Proposition}
\newtheorem{remark}[theorem]{Remark}
\numberwithin{equation}{section}
\newcommand{\mr}{\mathbb{R}}
\newcommand{\rom}[1]{\uppercase\expandafter{\romannumeral #1\relax}}
\newcommand{\ds}{\displaystyle}
\newcommand{\ep}{\epsilon}
\newcommand{ \R }{ \mathbb{R} }
\def\Xint#1{\mathchoice
    {\XXint\displaystyle\textstyle{#1}}%
     {\XXint\textstyle\scriptstyle{#1}}%
     {\XXint\scriptstyle\scriptscriptstyle{#1}}%
     {\XXint\scriptstyle\scriptscriptstyle{#1}}%
    \!\int}
\def\XXint#1#2#3{{\setbox0=\hbox{$#1{#2#3}{\int}$}
    \vcenter{\hbox{$#2#3$}}\kern-.5\wd0}}
\begin{document}

\today

\title[]{Local H\"older continuity for fractional nonlocal equations with general growth}

\author[]{Sun-Sig Byun, Hyojin Kim \and Jihoon Ok}

\address{Sun-Sig Byun: Department of Mathematical Sciences and Research Institute of Mathematics, Seoul National University, Seoul 08826, Korea}
\email{byun@snu.ac.kr}

\address{Hyojin Kim: Department of Mathematical Sciences, Seoul National University, Seoul 08826, Korea}
\email{hyojin@snu.ac.kr}

\address{Jihoon Ok: Department of Mathematics, Sogang University, Seoul 04107, Korea}
\email{jihoonok@sogang.ac.kr}

\keywords{Nonlocal operator; general growth; N-function; local boundedness; H\"older continuity}

\subjclass[2020]{
35R11; 
47G20; 
35D30;  
35B65. 
}

\thanks{S. Byun was supported by NRF-2021R1A4A1027378. H. Kim was supported by 2020R1C1C1A01009760, J. Ok was supported by NRF-2017R1C1B2010328.}

\begin{abstract}
We study generalized fractional $p$-Laplacian equations to prove local boundedness and H\"older continuity of weak solutions to such nonlocal problems by finding a suitable fractional Sobolev-Poincar\'e inquality.

\end{abstract}

\maketitle

\section{Introduction}\label{sec1}

In this paper we study the fractional nonlocal equation
\begin{align}\label{pde}
\mathcal Lu=0  \quad \mbox{in } \ \Omega
\end{align}
defined on a bounded domain $\Omega$ in $\mathbb{R}^n$
with $n \geq 2$ by
\begin{align}\label{op}
\mathcal
Lv(x):=\mathrm{p.v.}\int_{\mr^n}g\left(\frac{|v(x)-v(y)|}{|x-y|^s}\right)\frac{v(x)-v(y)}{|v(x)-v(y)|}K(x,y)\frac{dy}{|x-y|^{s}},
\end{align}
where $0<s<1$, $g:[0,\infty)\to [0,\infty)$ is a strictly
increasing, continuous function that satisfies $g(0)=0$, $\ds
\lim_{t\to\infty}g(t)=\infty$ and
\begin{align}\label{growth}
1< p \le \frac{tg(t)}{G(t)} \le q < \infty \quad \text{for some }\
1<p\le q, \quad \mbox{where }\ G(t):=\int_0^tg(s)\,ds.
\end{align}
$K:\mr^{n} \times \mr^{n} \rightarrow (0,\infty]$ is a symmetric,
i.e., $K(x,y)=K(y,x)$, and measurable kernel that satisfies
\begin{align}\label{kernel}
\frac{\lambda}{|x-y|^{n}} \le K(x,y) \le \frac{\Lambda}{|x-y|^{n}},
\qquad x,y\in \R^n,
\end{align}
for some $0<\lambda \le \Lambda$. A main point 
is that
the function $G$ is an N-function satisfying the $\Delta_{2}$ and
$\nabla_{2}$ conditions (see the next section) and that a simple
example of the kernel $K(x,y)$ is $a(x,y)|x-y|^{-n}$ with $\lambda
\le a \le \Lambda$. Note in particular case when
$K(x,y)=|x-y|^{-n}$, $\mathcal L$ is the so-called $s$-fractional
$G$-Laplace operator and we denote it by $\mathcal L=(-\Delta)_{G}^{s}$.

The goal of this paper is to establish the local H\"older regularity for
the nonlocal problem \eqref{pde} without a priori boundedness assumption of
weak solutions. In addition we will discuss the existence and
uniqueness of weak solution to \eqref{pde} with a Dirichlet
boundary condition.

An obvious example is $g(t)=t$ and $K(x,y)=|x-y|^{-n}$, in which
case it reduces to the $s$-fraction Laplace operator $(-\Delta)^s$
and Caffarelli, Chan and Vasseur \cite{CCV1} proved H\"older
regularity result for the corresponding parabolic fractional
equation by the approach of De Giorgi modified to the nonlocal
setting. We refer to
\cite{CCV1,CS,Kas1,Kas2,KMS2,Sil,MSY} for various regularity results, including the
Harnack inequality, self improving property and $L^p$-regularity,
for weak solutions to the fractional nonlocal linear equations. On
the other hand for the fractional $p$-Laplacian type equations,
i.e., $g(t)=t^{p-1}$ with $1<p<\infty$, Di Castro, Kuusi and
Palatucci  \cite{DKP2} proved local H\"older regularity by employing the so-called tail(see
the next section). We further refer to
\cite{Coz,DKP1,DZZ,GK,GKS,KKL,KKP1,KKP2,KMS1,Lin,Now1,Now2} for
studies on the nonlocal nonlinear equations of the fractional
$p$-Laplacian type.

A general non-autonomous fractional nonlocal operator can be written as
\begin{align*}
\mathcal
Lv(x):=\mathrm{p.v.}\int_{\mr^n}h\left(x,y,\frac{|v(x)-v(y)|}{|x-y|^s}\right)\frac{v(x)-v(y)}{|v(x)-v(y)|}K(x,y)\frac{dy}{|x-y|^{s}}.
\end{align*}
If $h(x,y,t) \approx t^{p-1}$, then we say that the operator or
equation satisfies the $p$-growth condition. On the other hand, if
$g(x,y,t)$ has a more general structure, then we say that the
operator or equation satisfies a non-standard growth condition.
Typical examples of non-standard growth conditions include the
variable growth condition: $h(x,y,t)\approx t^{p(x,y)-1}$, the
double phase condition: $h(x,y,t)\approx t^{p-1}+a(x,y)t^{q-1}$, and
the general growth condition: $h(x,y,t)\approx g(t)$. Recently
there has been a great deal of studies concerning fractional
nonlocal equations with nonstandard growth conditions, in particular
for H\"older regularity in \cite{Ok1,CK1} with the variable growth
condition and in \cite{BOS,DP,FZ} with the double phase condition,
respectively.

We are mainly focusing on the general growth condition. The local
one corresponding to the nonlocal equation \eqref{pde} is the so
called $G$-Laplace equation:
\begin{align*}
\mathrm{div}\, \left(g(|Du|)\frac{Du}{|Du|}\right)=0\quad \mbox{in}\
\ \Omega, \quad \text{where }\ g(t)=G'(t),
\end{align*}
for which Lieberman \cite{Lie1} proved the $C^{0,\alpha}$ and
$C^{1,\alpha}$ continuity of weak solutions under sharp conditions
on $g$ like \eqref{growth}. We also refer to
\cite{DSV1,HHL1,HO} and references therein for the regularity
results regarding generalized $p$-Laplace equations. It is interesting
to consider the assertion $C^{0,\alpha}$ continuity as a natural
outgrowth to the nonlocal equation \eqref{pde}. Now, we can see such
H\"older regularity results in the recent papers \cite{FSV1, FSV2,
CKW1}. However, they are established with boundedness assumption of
weak solutions or under rather restrictive conditions on $g$. Our
intention in this paper is to prove the local H\"older regularity of
weak solutions to \eqref{pde} without this a priori assumption and
other condition than \ref{growth}.

With the definition of weak solution, the related function spaces
and the tail to be introduced in details in the next section,
we now state our main result.

\begin{theorem}\label{mainthm}
Let $0<s<1$. Suppose that $u \in \mathbb{W} ^{s,G}(\Omega) \cap L_{s}^{g}(\mr^{n})$
is a weak solution to \eqref{pde} with  \eqref{op}, \eqref{growth} and
\eqref{kernel}. Then $u \in C_{\mathrm{loc}}^{0,\alpha}(\Omega)$ for some
$\alpha \equiv \alpha(n,s,p,q,\lambda,\Lambda)\in(0,1)$. Moreover, there exist
positive constants $c_b$ and $c_h$ depending on $n,s,p,q,\lambda$
and $\Lambda$ such that for any $B_{r}(x_{0}) \Subset  \Omega$,
\begin{align}
\label{lb1} \|u\|_{L^\infty(B_{r/2}(x_0))} \le c_b r^{s}G^{-1}\left(
\Xint-_{B_r(x_0)}G\left( \frac{|u|}{r^s} \right)dx \right) +
r^{s}g^{-1}(r^s\mathrm{Tail}(u;x_0,r/2))
\end{align}
and
\begin{align}\label{holder}
[ u ]_{C^{0,\alpha}(B_{r/2}(x_{0}))} \le
\frac{c_h}{r^{\alpha}}\left[ r^{s}G^{-1} \left(
\Xint-_{B_{r}(x_0)}G\left( \frac{|u|}{r^{s}} \right)dx \right) +
r^{s}g^{-1}(r^{s}\mathrm{Tail}(u;x_{0},r))\right].
\end{align}
\end{theorem}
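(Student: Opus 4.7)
The overarching strategy is a De Giorgi--type iteration scheme adapted to the fractional Orlicz setting, in the spirit of Di Castro--Kuusi--Palatucci \cite{DKP2} but with $t^{p-1}$ replaced by the general $g$. First I would derive a Caccioppoli inequality for the truncated functions $w_{\pm}=(u-k)_{\pm}$ by formally testing the equation with $\eta^{q} w_{\pm}$, where $\eta$ is a smooth cutoff compactly supported in a smaller ball. Splitting the double integral defining $\mathcal L u$ into (i) pairs $(x,y)$ both lying in the support of $\eta$ and (ii) pairs with one point far away, one uses the kernel bounds \eqref{kernel} and the $\Delta_{2}$, $\nabla_{2}$ properties of $G$ encoded in \eqref{growth} to absorb the mixed terms. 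The far part produces a $\mathrm{Tail}$-contribution involving $g$, while the local part controls a fractional Orlicz Gagliardo seminorm of $\eta\, w_{\pm}$.

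To prove the local boundedness \eqref{lb1}, I would combine this Caccioppoli estimate with the fractional Sobolev--Poincar\'e inequality for $G$ promised in the abstract: the latter upgrades the Gagliardo seminorm of $\eta\,w_{+}$ to an $L^{\kappa}$-integral of $w_{+}$ for some $\kappa>1$ depending on $n,s,p,q$. A standard Moser/De~Giorgi iteration on decreasing radii $r_{j}=r/2+r/2^{j+1}$ and increasing truncation levels $k_{j}\nearrow k$ then yields \eqref{lb1}, with the tail term $r^{s}g^{-1}(r^{s}\mathrm{Tail}(u;x_{0},r/2))$ playing exactly the role of the critical level above which the iteration converges. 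The inverse $g^{-1}$ in the tail is the natural non-homogeneous analogue of the quantity $\mathrm{Tail}(u)^{1/(p-1)}$ familiar from the fractional $p$-Laplacian.

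For the H\"older estimate \eqref{holder}, the goal is an oscillation-decay statement of the form
\[
\underset{B_{\sigma r}}{\mathrm{osc}}\,u\;\le\;\theta\,\underset{B_{r}}{\mathrm{osc}}\,u \;+\; \text{(scaled tail)},\qquad \theta\in(0,1),
\]
from which \eqref{holder} follows by standard dyadic iteration once the tail is geometrically summed. The key step is a measure-to-pointwise improvement: if $u$ exceeds a fixed level on a positive fraction of $B_{r}$, then it is bounded below by a strictly larger constant on $B_{r/2}$. I would establish this via a logarithmic estimate (testing with something like $\log(\delta+(u-k)_{-})$ plus cutoffs) to spread positivity, followed by a second De~Giorgi iteration applied to the function $(k-u)_{+}$, reusing the Caccioppoli and Sobolev--Poincar\'e tools from the boundedness step.

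The principal obstacle is the lack of homogeneity of $g$: unlike $t^{p-1}$, one cannot simply pull constants through $g$, and essentially every algebraic identity from the $p$-Laplacian case must be replaced by an inequality using \eqref{growth} and the $\Delta_{2}$/$\nabla_{2}$ conditions. In particular, the correct formulation and proof of the fractional $G$-Sobolev--Poincar\'e inequality (uniform in $G$ across the range $p\le tg(t)/G(t)\le q$) is delicate and will likely be the technical heart of the paper. A secondary difficulty is handling the $g$-tail: one must verify that $r^{s}g^{-1}(r^{s}\mathrm{Tail}(u;x_{0},\rho))$ enjoys the right monotonicity and summability under dyadic scaling to make the oscillation iteration close, and that it is consistent with the local $L^{\infty}$-type quantity $r^{s}G^{-1}(\Xint-_{B_{r}}G(|u|/r^{s})\,dx)$ appearing on the right-hand sides of \eqref{lb1} and \eqref{holder}.
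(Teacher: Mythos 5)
Your proposal matches the paper's strategy essentially step for step: a Caccioppoli estimate obtained by testing with $w_{\pm}\phi^{q}$, a fractional $G$-Sobolev--Poincar\'e inequality as the new technical ingredient, a De Giorgi iteration for local boundedness with the $g^{-1}(\mathrm{Tail})$ term as critical level, and for H\"older continuity a logarithmic lemma combined with a second truncation iteration to produce geometric oscillation decay. The only imprecision is the description of the log-lemma test function: the paper tests with $\eta=\frac{(u+d)\phi^{q}}{G((u+d)/r^{s})}$ (the $G$-analogue of $u^{1-p}\phi^{p}$ from the $p$-Laplacian case), which then \emph{produces} the logarithmic quantity in the estimate, rather than a log-type test function as your sketch suggests.
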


\begin{remark}
We can consider a weaker condition on $g$ that $g(0)=0$, $\ds
\lim_{t\to\infty}g(t)=\infty$, and
\begin{align*}
\frac{g(s)}{s^{p-1}} \le L \frac{g(t)}{t^{p-1}} \ \ \mbox{and}\ \
\frac{g(t)}{t^{q-1}} \le L \frac{g(s)}{s^{q-1}} \quad \mbox{for all
}\ 0<s\le t,
\end{align*}
for some $1<p<q$ and $L\ge 1$. In fact, we see that there exists a
function $\tilde g$ such that $\tilde g \approx g$ and $\tilde g$
satisfies the assumption of $g$ in \eqref{growth}. See \cite[Chapter
2]{HH} for details.
\end{remark}

Our proof of the theorem is based on the De Giorgi approach established in \cite{DKP2}, in particular, for the fractional $p$-Laplace type equations in the setting of fractional Sobolev space $W^{s,p}$. On the other hand, to the fractional $G$-Laplace type equations,  this approach can not be directly applied, as $G(st)\not\approx G(s)G(t)$ (this equivalence is true when $G(t)=t^p$). Indeed, we are forced to face a more complicated and delicate situation under which we need to make a very careful systematic analysis to overcome the complexity and difficulty coming from such a $G$-Laplace type nonlocal problem. 
Moreover, an integral version of Sobolev-Poincare type inequality plays an
essential role in the process of De Giorgi iteration, which is not
known in the fractional Orlicz-Sobolev space as of today, as far as
we are concerned. Therefore in this paper we obtain this estimate in
Lemma~\ref{lem.sobopoin}.

The paper is organized as follows. In the next section we introduce
notations, functions spaces. weak solutions and fundamental
inequalities that will be used throughout this paper.
Section~\ref{sec3} is devoted to deriving two essential estimates
for weak solutions to \eqref{pde}. One is a Caccioppoli type
inequality and the other is a logarithmic estimate. In
Section~\ref{sec4} we prove Theorem~\ref{mainthm} by proving the
local boundedness and then H\"older continuity of weak solutions.
The final section includes the existence and uniqueness of weak
solutions to \eqref{pde} with the Dirichlet boundary condition.

\section{Preliminaries}\label{sec2}

In this paper $B_r(x_0)$ denotes the ball in $\R^n$ with radius $r>0$ centered at $x_0 \in \mathbb{R}^n$. When the center is clear in the context, we write it
by $B_r$ for the sake of simplicity. The average of an integrable
function $f$ on $B_r$ is defined as
\begin{align*}
(f)_{B_r}=\Xint-_{B_r} f\, dx =\frac{1}{|B_r|} \int_{B_r} f\,dx.
\end{align*}
Throughout this paper we denote by $c$ to mean a universal constant that can be computed by given quantities 
such as $n, s, p, q, \lambda, \Lambda$. This generic constant  can vary from line to line.

\subsection{N-functions}

A measurable function $G:[0,\infty) \rightarrow [0,\infty)$ is
called an \textit{N-function}(nice Young function) if it is a Young
function, i.e., it is increasing and convex and $G(0)=0$, and
satisfies
\begin{align*}
\lim_{t \rightarrow
0+}\frac{G(t)}{t} = 0 \quad \mbox{and} \quad  \lim_{t \rightarrow
\infty}\frac{G(t)}{t} = \infty.
\end{align*}
For an N-function $G:[0,\infty) \rightarrow [0,\infty)$, its
conjugate function $G^{*}:[0,\infty)\to [0,\infty)$ of $G$ is
defined by
\begin{align*}
G^{*}(t):=\sup_{s \ge 0}\, (st-G(s)), \quad t\ge0.
\end{align*}
Throughout this paper we always assume that $G\in C^1([0,\infty))$
satisfies \eqref{growth}. Note from \eqref{growth} that for
$t\in[0,\infty)$,
\begin{align}\label{Gineq}
a^q G(t)  \le G(at) \le a^p G(t)\  \text{if}  \ 0<a<1 \  \mbox{and}\  a^p G(t)  \le G(at) \le a^q G(t) \ \text{if}\   a>1,
\end{align}
and that
\begin{align}\label{G*ineq}
a^{p'} G^*(t)  \le G^*(at) \le a^{q'} G^*(t)\  \text{if}  \ 0<a<1 \  \mbox{and}\  a^{q'} G^*(t)  \le G^*(at) \le a^{p'} G^*(t) \  \text{if}\  a>1.
\end{align}
where $p'$ and $q'$ are the H\"older conjugates of $p$ and $q$, respectively.
Also we see that $G$ satisfies the following $\Delta_2$ and
$\nabla_2$ conditions (see \cite[Proposition 2.3]{MR}):
\begin{itemize}
\item[($\Delta_{2}$)] there exists a constant $\kappa > 1$ such that
\begin{align}\label{del2}
G(2t) \le \kappa G(t) \quad \mbox{for all} \quad t \ge 0;
\end{align}
\item[($\nabla_{2}$)]
there exists a constant $l > 1$ such that
\begin{align}\label{na2}
G(t) \le \frac{1}{2l}G(lt) \quad \mbox{for all} \quad t \ge 0,
\end{align}
\end{itemize}
where the constants $\kappa$ and $l$ are to be determined by $q$ and $p$.
Note that $G$ satisfies the  $\nabla_2$ condition if and only if $G^*$ does the $\Delta_2$ condition.
In addition, from the definition of the conjugate function, we have
\begin{align}\label{Young0}
ts \le G(t)+G^*(s), \qquad t,s\ge 0.
\end{align}
From \eqref{Gineq}, we deduce that for every $\epsilon\in(0,1)$
\begin{align}\label{Young}
ts \le \epsilon^{1-q} G(t)+ \epsilon G^*(s), \qquad t,s\ge 0,
\end{align}
which is Young's inequality with $\epsilon$. We further have from
\eqref{growth} that
\begin{align}\label{Young1}
G^{*}(g(t)) = tg(t) - G(t) \le (q-1)G(t), \quad t\ge 0.
\end{align}
Also the convexity and \eqref{Gineq} imply
$$
2^{-1} (G(t) + G(s)) \le G(t+s) \le 2^{q-1} (G(t)+G(s)),
$$
which will be used often later in this paper.

\subsection{Fractional Orlicz-Sobolev spaces}

For an open subset $U$ in $\mathbb{R}^n$,  we denote by $\ds
\mathcal{M}(U)$ to mean the class of all real-valued
measurable functions on $U$. For an N-function $G$ satisfying the $\Delta_2$ and $\nabla_2$ conditions, we
define the \textit{Orlicz space} $L^{G}(U)$
as
\begin{align*}
L^{G}(U):=\left\{ v \in \mathcal{M}(U) \ \Big| \
\int_{U}G(|v(x)|)\,dx<\infty \right\},
\end{align*}
which is a Banach space with the Luxemburg norm defined as
\begin{align*}
\|v\|_{L^{G}(U)}:= \inf \left\{ \lambda>0 \ \Big| \ \int_{U}G\left( \frac{|v(x)|}{\lambda} \right)dx \le 1 \right\}.
\end{align*}
Then note that
\begin{align}\label{lux}
\|v\|_{L^{G}(U)} \le \int_{U}G\left( |v| \right)dx + 1
\end{align}
We next let $0<s<1$ and define the \textit{fractional Orlicz-Sobolev
space} $W^{s,G}(U)$ as
\begin{align*} W^{s,G}(U) := \left\{
v \in L^{G}(U) \ \Big| \ \int_{U}\int_{U} G\left(
\frac{|v(x)-v(y)|}{|x-y|^{s}} \right)\frac{dxdy}{|x-y|^{n}} < \infty
\right\},
\end{align*}
which is also a Banach space with the norm
\begin{align*}
\|v\|_{W^{s,G}(U)}:= \|v\|_{L^{G}(U)} + [v]_{s,G,U},
\end{align*}
where $[v]_{s,G,U}$ is the Gagliardo semi-norm defined by
\begin{align*}
[v]_{s,G,U} := \inf \left\{ \lambda>0 \ \Big| \ \int_{U}
\int_{U}G\left( \frac{|v(x)-v(y)|}{\lambda|x-y|^{s}}
\right)\frac{dxdy}{|x-y|^{n}} \le 1 \right\}.
\end{align*}
Thus we have
\begin{align}\label{gar}
[v]_{s,G,U} \le \int_{U}\int_{U}G\left( \frac{|v(x)-v(y)|}{|x-y|^{s}} \right)\frac{dxdy}{|x-y|^{n}} + 1.
\end{align}
Now we introduce the function space to which weak solutions of
\eqref{op} belong,  see the next subsection for the concept of weak
solution. We write
$C_{\Omega}:=(\Omega \times \mr^{n}) \cup (\mr^{n} \times \Omega)$.
Then the space  $\mathbb{W}^{s,G}(\Omega)$ consists of all functions  $v \in \mathcal{M}(\mr^{n})$ with
$\ v |_{\Omega} \in L^{G}(\Omega)$ and
$$\iint_{C_{\Omega}} G\left( \frac{|v(x)-v(y)|}{|x-y|^{s}} \right)\frac{dxdy}{|x-y|^{n}} < \infty.$$
Note that if $v \in \mathbb{W}^{s,G}(\Omega)$, then $v |_{\Omega} \in W^{s,G}(\Omega)$.

\subsection{Weak solution and tail}
We first recall $g$ with \eqref{growth} and $K$ with \eqref{kernel}
to define a weak solution to \eqref{pde}.
\begin{definition}\label{sol}
 $ u \in \mathbb{W}^{s,G}(\Omega)$ is a
 weak solution (resp. subsolution or supersolution) to \eqref{pde} if
\begin{align*}
\iint_{C_{\Omega}}g\left(\frac{|u(x)-u(y)|}{|x-y|^s}\right)\frac{u(x)-u(y)}{|u(x)-u(y)|}(\eta(x)-\eta(y))K(x,y)\,dxdy = 0 \ \text{(resp. $\le0$ or $\ge 0$)}
\end{align*}
for any $\eta \in \mathbb{W}^{s,G}(\Omega)$ (resp. nonnegative $\eta
\in \mathbb{W}^{s,G}(\Omega)$) such that $\eta=0$ in $\mr^{n}
\setminus \Omega$.
\end{definition}

We next write
$$
L^{g}_{s}(\mr^{n}):= \left\{ u \in \mathcal{M}(\mr^{n}) : \int_{\mr^{n}}g\left( \frac{|u(x)|}{(1+|x|)^{s}} \right) \frac{dx}{(1+|x|)^{n+s}}< \infty \right\},
$$
and the tail of $u \in L^{g}_{s}(\mr^{n})$ for the ball $B_R(x_0)$ is denoted by
\begin{align}
\label{tail}
\mbox{Tail}(u;x_{0},R) := \int_{\mr^{n} \setminus B_{R}(x_{0})}g\left( \frac{|u(x)|}{|x-x_{0}|^s} \right)\frac{dx}{|x-x_{0}|^{n+s}}.
\end{align}
We notice that  $u \in L^{g}_{s}(\mr^{n})$ if and only if $\mbox{Tail}(u;x_{0},R) < \infty$ for all $x_{0} \in \mr^{n}$ and $R > 0$. 
Indeed, for $x \in \mr^{n} \setminus B_{R}(x_0)$, a direct computation leads to
$$ \frac{1+|x|}{|x-x_0|} \le 1+\frac{1+|x_0|}{R}.$$
Then it follows from \eqref{Gineq} that
\begin{align*}
\mbox{Tail}(u;x_{0},R) &= \int_{\mr^{n} \setminus B_{R}(x_{0})}g\left( \frac{|u(x)|}{(1+|x|)^s}\left( \frac{1+|x|}{|x-x_0|} \right)^{s} \right)\left( \frac{1+|x|}{|x-x_0|} \right)^{n+s}\frac{dx}{(1+|x|)^{n+s}}\\
&\le \left( 1+\frac{1+|x_0|}{R} \right)^{n+sq}\int_{\mr^{n} \setminus B_{R}(x_{0})}g\left( \frac{|u(x)|}{(1+|x|)^s} \right)\frac{dx}{(1+|x|)^{n+s}} < \infty.
\end{align*}
To show the converse relation, choose two different ponts $x_1$, $x_2$ with $|x_1|>1$, $|x_2|>1$, and 
let $0<R \leq  \frac{|x_1-x_2|}{4}$. Then we find
\begin{align*}
\frac{|x-x_i|}{1+|x|} \le 1+\frac{|x_i|-1}{1+|x|} \le |x_i|,\  i=1,2.
\end{align*}
Therefore we can estimate as above that
\begin{align*}
&\int_{\mr^{n}}g\left( \frac{|u(x)|}{(1+|x|)^{s}} \right) \frac{dx}{(1+|x|)^{n+s}}\\
&\quad \le \int_{\mr^{n} \setminus B_{R}(x_1)}g\left( \frac{|u(x)|}{(1+|x|)^{s}} \right) \frac{dx}{(1+|x|)^{n+s}} + \int_{\mr^{n} \setminus B_{R}(x_2)}g\left( \frac{|u(x)|}{(1+|x|)^{s}} \right) \frac{dx}{(1+|x|)^{n+s}}\\
&\quad \le |x_1|^{n+sq}\int_{\mr^{n} \setminus B_{R}(x_{1})}g\left( \frac{|u(x)|}{|x-x_{1}|^s} \right)\frac{dx}{|x-x_{1}|^{n+s}}\\
&\quad \quad + |x_2|^{n+sq}\int_{\mr^{n} \setminus B_{R}(x_{2})}g\left( \frac{|u(x)|}{|x-x_{2}|^s} \right)\frac{dx}{|x-x_{2}|^{n+s}}<\infty.
\end{align*}

\begin{remark}
Observe that
\begin{align}
\label{cor.tail}
R^{s}g^{-1}\left( R^{s}\mathrm{Tail}(u;x_{0},R) \right)
= R^{s}g^{-1}\left( R^{s}\int_{\mr^{n} \setminus B_{R}(x_{0})}g\left( \frac{|u(x)|}{|x-x_{0}|^s} \right)\frac{dx}{|x-x_{0}|^{n+s}} \right).
\end{align}
For the case of $g(t)=t^{p-1}$, \eqref{cor.tail} is
reduced to
\begin{align*}
\left[ R^{sp}\int_{\mr^{n} \setminus B_{R}(x_{0})}\frac{|u(x)|^{p-1}}{|x-x_{0}|^{n+sp}}\,dx \right]^{\frac{1}{p-1}},
\end{align*}
which is the tail used in \cite{DKP2}.
In this paper we use \eqref{tail} instead of \eqref{cor.tail} for simplicity.
\end{remark}

\begin{remark} \,
\begin{enumerate}
\item 
Note that $W^{s,G}(\mr^{n}) \subset \mathbb{W}^{s,G}(\Omega) \cap L^{g}_{s}(\mr^{n})$.

\item
Let $\psi$ be a Young function satisfying $g(t) \le c\psi(t)$ for $t \ge t_{0}$, where $c$ and $t_{0}$ are some positive constants.
If $u \in L^{\psi}(\mr^{n})$ or $u \in L^{\psi}(B_R(0)) \cap L^{\infty}(\mr^{n} \setminus B_R(0))$, then $u \in L_{s}^{g}(\mr^{n})$.
\end{enumerate}
\end{remark}

\section{Auxiliary Estimates}\label{sec3}

In this section we derive two estimates for the weak solutions to \eqref{pde} that play essential roles in the proof of the main theorem.
The first one is a Caccioppoli type estimate. A similar Caccioppoli type estimate in the Orlicz setting can be also found in \cite{CKW1}.
\begin{proposition}
\label{lem.caccio}(Caccioppoli type estimate)
Let $u \in \mathbb{W}^{s,G}(\Omega) \cap L^{g}_{s}(\mr^{n})$ be a weak solution to \eqref{pde}.
Then for any $k\ge 0$, $B_{r} \equiv B_{r}(x_0) \Subset \Omega$ and $\phi \in C_{0}^{\infty}(B_{r})$
with $0\le \phi\le 1$, we have
\begin{align}\label{caccio}
\begin{split}
&\int_{B_{r}}\int_{B_{r}}G\left(\frac{|w_{\pm}(x)-w_{\pm}(y)|}{|x-y|^s}\right)\min{\{ \phi^{q}(x),\phi^{q}(y) \}}\frac{dxdy}{|x-y|^{n}}\\
&\quad\le c\int_{B_{r}}\int_{B_{r}}G\left(\frac{|\phi(x)-\phi(y)|}{|x-y|^s} \max \{w_{+}(x),w_{+}(y)\} \right)\frac{dxdy}{|x-y|^{n}}\\
&\qquad +c\int_{B_{r}}w_{\pm}(x)\phi^{q}(x)\,dx\left( \sup_{y\, \in\, \mathrm{supp}\,\phi}\int_{\mr^{n} \setminus B_{r}}g\left(\frac{w_{\pm}(x)}{|x-y|^{s}}\right)\frac{dx}{|x-y|^{n+s}}\right),
\end{split}
\end{align}
where $w_{\pm}:=(u-k)_{\pm}=\max\{\pm(u-k),0\}$ and $c>0$ depends on $n,s,p,q,\lambda$ and $\Lambda$.
\end{proposition}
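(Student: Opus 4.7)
\textbf{Proof plan for Proposition \ref{lem.caccio}.} My plan is to test the weak formulation with $\eta = w_{\pm}\phi^{q}$, split the resulting double integral into a local part over $B_{r}\times B_{r}$ and a tail part over $B_{r}\times(\mr^{n}\setminus B_{r})$ (together with its symmetric counterpart), extract the good energy term from a pointwise algebraic inequality on the local part, and absorb the sign-bad contribution from the tail part using the kernel bound \eqref{kernel}. Since \eqref{pde} is invariant under $u\mapsto -u$ (the operator $\mathcal L$ is odd in $u$), it suffices to carry out the argument for $w_{+}$; the case $w_{-}$ then follows by replacing $u$ with $-u$ and $k$ with $-k$. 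A short approximation argument is needed to justify that $\eta=w_{+}\phi^{q}$ is admissible in Definition~\ref{sol}, using that $\phi$ has compact support in $B_{r}\Subset\Omega$ and $w_{+}|_{\Omega}\in W^{s,G}(\Omega)$.

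\textbf{Local part.} Setting $h=|x-y|^{s}$, $A=u(x)-u(y)$, $B=w_{+}(x)-w_{+}(y)$, and $M=\max\{w_{+}(x),w_{+}(y)\}$, the main task is to show the pointwise inequality
\begin{align*}
g\!\left(\tfrac{|A|}{h}\right)\mathrm{sgn}(A)\,\bigl(w_{+}(x)\phi^{q}(x)-w_{+}(y)\phi^{q}(y)\bigr)
\ge c_{1} h\, G\!\left(\tfrac{|B|}{h}\right)\min\{\phi^{q}(x),\phi^{q}(y)\}
- c_{2} h\, G\!\left(\tfrac{M\,|\phi(x)-\phi(y)|}{h}\right).
\end{align*}
I would derive this by cases on the signs of $u(x)-k$ and $u(y)-k$. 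In the principal case where both are $\ge k$, one has $A=B$, and writing
$w_{+}(x)\phi^{q}(x)-w_{+}(y)\phi^{q}(y) = \min\{\phi^{q}(x),\phi^{q}(y)\}\,B + (\text{cross term involving }\phi^{q}(x)-\phi^{q}(y))$,
the first piece gives $g(B/h)B\min\{\phi^{q}\}\ge p\,h\,G(B/h)\min\{\phi^{q}\}$ by \eqref{growth}, while the cross term is absorbed by Young's inequality with $\epsilon$ in the form \eqref{Young} together with \eqref{Young1} and the elementary bound $|\phi^{q}(x)-\phi^{q}(y)|\le 2q|\phi(x)-\phi(y)|$ (since $\phi\le 1$); choosing $\epsilon$ proportional to $\min\{\phi^{q}(x),\phi^{q}(y)\}$ and invoking \eqref{Gineq} on the resulting term converts it to $G(M|\phi(x)-\phi(y)|/h)$. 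The remaining cases, in which exactly one or neither of $u(x),u(y)$ exceeds $k$, are easier because either $w_{+}(x)\phi^{q}(x)-w_{+}(y)\phi^{q}(y)$ has the same sign as $\mathrm{sgn}(A)$ (so one simply drops the error term) or both $w_{+}(x)$ and $w_{+}(y)$ vanish. Multiplying the resulting pointwise bound by $K(x,y)/|x-y|^{s}$ and integrating converts the factor $h/|x-y|^{n+s}$ into $dxdy/|x-y|^{n}$, producing exactly the left-hand side and the first error term of \eqref{caccio}.

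\textbf{Tail part.} For $x\in B_{r}\cap\mathrm{supp}(\phi)$ and $y\notin B_{r}$, one has $\eta(y)=0$, so the contribution equals
$g(|u(x)-u(y)|/|x-y|^{s})\mathrm{sgn}(u(x)-u(y))\,w_{+}(x)\phi^{q}(x)\,K(x,y)/|x-y|^{s}$.
The good-sign portion ($u(x)\ge u(y)$) can be discarded on passing to a lower bound. The bad-sign portion occurs only when $u(y)>u(x)\ge k$ (otherwise $w_{+}(x)=0$), and then $|u(x)-u(y)|=u(y)-u(x)\le u(y)-k=w_{+}(y)$, so monotonicity of $g$ gives
\begin{align*}
g\!\left(\tfrac{|u(x)-u(y)|}{|x-y|^{s}}\right)\le g\!\left(\tfrac{w_{+}(y)}{|x-y|^{s}}\right).
\end{align*}
Using \eqref{kernel}, the bad-sign contribution is controlled by the product structure in \eqref{caccio} by factoring out $\int_{B_{r}} w_{+}(x)\phi^{q}(x)\,dx$ and taking the supremum over $y\in\mathrm{supp}(\phi)$ of the remaining tail integral (relabelling the dummy variable). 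The symmetric piece over $(\mr^{n}\setminus B_{r})\times B_{r}$ is handled identically by swapping $x$ and $y$.

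\textbf{Main obstacle.} The genuinely delicate step is the pointwise algebraic inequality on $B_{r}\times B_{r}$: unlike the pure $p$-growth setting of \cite{DKP2}, where $(a-b)^{p-1}\mathrm{sgn}(a-b)$ admits clean factorization, here the absence of the identity $G(st)\approx G(s)G(t)$ forces one to work with $g$ and $G^{*}$ jointly and to track constants through the $\Delta_{2}$ and $\nabla_{2}$ conditions. The correct $\epsilon$-weighting in \eqref{Young} must be chosen so that the main term $c_{1}G(|B|/h)\min\{\phi^{q}\}$ survives absorption while the error has the right homogeneity $G(M|\phi(x)-\phi(y)|/h)$; this is the point where the constants in \eqref{growth} enter the Caccioppoli constant $c$.
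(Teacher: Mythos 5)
Your overall strategy coincides with the paper's: test with $\eta = w_{+}\phi^{q}$, split the double integral into a local piece over $B_{r}\times B_{r}$ and a tail piece, prove a pointwise algebraic inequality on the local piece, and control the tail using monotonicity of $g$, the kernel bound, and Fubini. Your treatment of the tail part and the reduction to $w_{+}$ via oddness of $\mathcal L$ are both fine.

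The genuine gap is in the absorption step on the local part. You replace the cross term $|\phi^{q}(x)-\phi^{q}(y)|$ by the crude bound $2q|\phi(x)-\phi(y)|$, thereby discarding the factor $\max\{\phi(x),\phi(y)\}^{q-1}$, and then propose to recover a $\min\{\phi^{q}(x),\phi^{q}(y)\}$ factor in front of $G(|B|/h)$ by taking $\epsilon$ proportional to $\min\{\phi^{q}(x),\phi^{q}(y)\}$ in \eqref{Young}. This cannot work: the companion constant in \eqref{Young} is $\epsilon^{1-q}$, which then becomes $\bigl(\min\{\phi^{q}(x),\phi^{q}(y)\}\bigr)^{1-q}$ and is unbounded on the set where $\phi$ touches zero (for instance $\phi(x)=0$, $\phi(y)=\tfrac12$, where the cross term does not vanish but your $\epsilon$ does). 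Moreover \eqref{Gineq} cannot repair this, since the argument of $G$ in your error term carries no compensating small factor. The mechanism the paper uses instead is to keep the factor $\phi^{q-1}(y)$ attached to $g$, so that after applying \eqref{Young} with a \emph{fixed} $\epsilon$ one can invoke the $G^{*}$-growth inequality \eqref{G*ineq} with $a=\phi^{q-1}(y)\le 1$ to obtain $G^{*}\bigl(g(\cdot)\phi^{q-1}(y)\bigr)\le \phi^{q}(y)\,G^{*}(g(\cdot))\le (q-1)\phi^{q}(y)\,G(\cdot)$ via \eqref{Young1}; since in that sub-case $\phi^{q}(y)$ equals the coefficient of the main term $p\,G(\cdot)\phi^{q}(y)$, a fixed $\epsilon\le \min\{p/(2q(q-1)),\tfrac12\}$ suffices to absorb it, and the surviving $\phi^{q}(y)\ge\min\{\phi^{q}(x),\phi^{q}(y)\}$ gives the claimed left-hand side. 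Without routing the $\phi^{q-1}$ factor through $G^{*}$ in this way, the absorption cannot produce a uniform constant.
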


\begin{proof}
We only consider $w_{+}$, as the same argument can apply to $w_{-}$.
Take $\eta:=w_{+}\phi^{q} \in \mathbb{W}^{s,p}(\Omega)$ as a test function to find
\begin{align}\label{c.1}
\begin{split}
0 &= \int_{\mr^n}\int_{\mr^n}g\left(\frac{|u(x)-u(y)|}{|x-y|^s}\right)\frac{u(x)-u(y)}{|u(x)-u(y)|}(\eta(x)-\eta(y))K(x,y)\frac{dxdy}{|x-y|^{s}}\\
&= \int_{B_{r}}\int_{B_{r}}g\left(\frac{|u(x)-u(y)|}{|x-y|^s}\right)\frac{u(x)-u(y)}{|u(x)-u(y)|}(\eta(x)-\eta(y))K(x,y)\frac{dxdy}{|x-y|^{s}}\\
&\quad + 2\int_{\mr^n \setminus B_{r}}\int_{B_{r}}g\left(\frac{|u(x)-u(y)|}{|x-y|^s}\right)\frac{u(x)-u(y)}{|u(x)-u(y)|}\eta(x)K(x,y)\frac{dxdy}{|x-y|^{s}}\\
&=:\rom{1} + \rom{2}.
\end{split}
\end{align}
Note that $\eta(x)=0$ for $x \in B_r \cap \{ u(x)<k \}$. We divide the latter part of the proof into two steps.

\textit{Step 1.} In this step we derive an estimate in terms of $w_{+}$ from \eqref{c.1}.
We first consider the integrand of $\rom{1}$ with respect to the measure, $K(x,y)\frac{dxdy}{|x-y|^{s}}$.
In the case when $u(x) \ge u(y)$ for $x,y \in B_{r}$,
we have
\begin{align}\label{c.rom1.c1}
\begin{split}
&g\left(\frac{|u(x)-u(y)|}{|x-y|^s}\right)\frac{u(x)-u(y)}{|u(x)-u(y)|}(\eta(x)-\eta(y))\\
&= g\left(\frac{u(x)-u(y)}{|x-y|^s}\right)(\eta(x)-\eta(y))\\
&=
\begin{cases}
\ds g\left(\frac{w_{+}(x)-w_{+}(y)}{|x-y|^s}\right)(\eta(x)-\eta(y)) &\  \mbox{if} \ \ u(x) \ge u(y) \ge k, \\
\ds g\left(\frac{u(x)-u(y)}{|x-y|^s}\right)\eta(x) &\ \mbox{if} \ \ u(x) \ge k > u(y), \\
\ds 0 &\ \mbox{if} \ \ k > u(x) \ge u(y)
\end{cases}\\
&\ge g\left( \frac{w_{+}(x)-w_{+}(y)}{|x-y|^s} \right)(\eta(x)-\eta(y))\\
&= g\left( \frac{|w_{+}(x)-w_{+}(y)|}{|x-y|^s} \right)\frac{w_{+}(x)-w_{+}(y)}{|w_{+}(x)-w_{+}(y)|}(\eta(x)-\eta(y)).
\end{split}
\end{align}
On the other hand, in the case when $u(x) < u(y)$ for $x,y\in B_r$, we exchange the roles of $x$ and $y$ in \eqref{c.rom1.c1} to obtain the same result.
Then we recall the assumption \eqref{kernel} to get
\begin{align}\label{c.rom1}
\rom{1} \ge \lambda\int_{B_{r}}\int_{B_{r}}g\left( \frac{|w_{+}(x)-w_{+}(y)|}{|x-y|^s} \right)\frac{w_{+}(x)-w_{+}(y)}{|w_{+}(x)-w_{+}(y)|}(\eta(x)-\eta(y))\frac{dxdy}{|x-y|^{n+s}}.
\end{align}
Next, let us consider $\rom{2}$. Note that
\begin{align*}
g\left(\frac{|u(x)-u(y)|}{|x-y|^s}\right)\frac{u(x)-u(y)}{|u(x)-u(y)|}\eta(x) \ge
\begin{cases}
-g\left(\frac{w_+(y)}{|x-y|^s}\right)\eta(x) &\ \ \text{if}\ \ u(y)>u(x)\ge k,\\
0 & \ \ \text{if}\ \ \text{otherwise}.
\end{cases}
\end{align*}
Inserting this inequality into $\rom{2}$, we deduce
\begin{align}\label{c.rom2}
\rom{2} \ge -2\Lambda\int_{\mr^n \setminus B_{r}}\int_{B_{r}}g\left( \frac{w_{+}(y)}{|x-y|^s} \right)\eta(x)\frac{dxdy}{|x-y|^{n+s}}.
\end{align}
We then combine \eqref{c.1}, \eqref{c.rom1}, and \eqref{c.rom2} to discover
\begin{align}
\label{c.sub}
\begin{split}
&\int_{B_{r}}\int_{B_{r}}g\left( \frac{|w_{+}(x)-w_{+}(y)|}{|x-y|^s} \right)\frac{w_{+}(x)-w_{+}(y)}{|w_{+}(x)-w_{+}(y)|}(\eta(x)-\eta(y))\frac{dxdy}{|x-y|^{n+s}}\\
&\le \frac{2\Lambda}{\lambda}\int_{\mr^n \setminus B_{r}}\int_{B_{r}}g\left( \frac{w_{+}(y)}{|x-y|^s} \right)\eta(x)\frac{dxdy}{|x-y|^{n+s}}.
\end{split}
\end{align}

\textit{Step 2.} Set
\begin{align*}
\rom{3}:=\int_{B_{r}}\int_{B_{r}}g\left( \frac{|w_{+}(x)-w_{+}(y)|}{|x-y|^s} \right)\frac{w_{+}(x)-w_{+}(y)}{|w_{+}(x)-w_{+}(y)|}(\eta(x)-\eta(y))\frac{dxdy}{|x-y|^{n+s}}
\end{align*}
and
\begin{align*}
\rom{4}:=\int_{\mr^n \setminus B_{r}}\int_{B_{r}}g\left( \frac{w_{+}(y)}{|x-y|^s} \right)\eta(x)\frac{dxdy}{|x-y|^{n+s}}.
\end{align*}
Then we see from \eqref{c.sub} that $\rom{3}\le \frac{2\Lambda}{\lambda} \rom{4}$.

To estimate $\rom{3}$, we first look at the integrand of $\rom{3}$ with respect to the measure $\frac{dxdy}{|x-y|^{n}}$.
Consider the following three cases:
\begin{enumerate}
\item $w_{+}(x)>w_{+}(y)$ and  $\phi(x) \le \phi(y)$,
\item $w_{+}(x)>w_{+}(y)$ and  $\phi(x) > \phi(y)$,
\item $w_{+}(x) \le w_{+}(y)$.
\end{enumerate}
\noindent In the case (1), we have
\begin{align}
\label{c.rom3.c1.1}
\begin{split}
&g\left(\frac{|w_{+}(x)-w_{+}(y)|}{|x-y|^s}\right)\frac{w_{+}(x)-w_{+}(y)}{|w_{+}(x)-w_{+}(y)|}\frac{w_+(x)\phi^q(x)-w_+(y)\phi^q(y)}{|x-y|^s}\\
&= g\left(\frac{w_{+}(x)-w_{+}(y)}{|x-y|^s}\right)\frac{w_{+}(x)-w_{+}(y)}{|x-y|^s}\phi^{q}(y) - g
\left(\frac{w_{+}(x)-w_{+}(y)}{|x-y|^s}\right)\frac{\phi^{q}(y)-\phi^{q}(x)}{|x-y|^s}w_{+}(x)\\
&\ge p G\left(\frac{w_{+}(x)-w_{+}(y)}{|x-y|^s}\right)\phi^{q}(y) - qg\left(\frac{w_{+}(x)-w_{+}(y)}{|x-y|^s}\right)
\phi^{q-1}(y)\frac{\phi(y)-\phi(x)}{|x-y|^s}w_{+}(x),
\end{split}
\end{align}
where we have used \eqref{growth} and the following elementary inequality
\begin{align*}
\phi^{q}(y)-\phi^{q}(x) \le q\phi^{q-1}(y)(\phi(y)-\phi(x)).
\end{align*}
We further estimate the second term on right hand side of \eqref{c.rom3.c1.1}. By using \eqref{Young} and  \eqref{Young1}, we have that for $\ep \in (0,1)$,
\begin{align}\label{c.rom3.y}
\begin{split}
&g\left(\frac{w_{+}(x)-w_{+}(y)}{|x-y|^s}\right)\phi^{q-1}(y)\frac{\phi(y)-\phi(x)}{|x-y|^s}w_{+}(x)\\
&\le \ep G^{*}\left(g\left( \frac{w_{+}(x)-w_{+}(y)}{|x-y|^s} \right)\phi^{q-1}(y)\right)  + c(\ep)G\left( \frac{\phi(y)-\phi(x)}{|x-y|^s}w_{+}(x) \right)\\
&\le \ep(q-1) G\left(\frac{w_{+}(x)-w_{+}(y)}{|x-y|^s}\right)\phi^{q}(y) + c(\ep)G\left( \frac{\phi(y)-\phi(x)}{|x-y|^s}w_{+}(x) \right).
\end{split}
\end{align}
For the last inequality, we have used \eqref{G*ineq} with
$a=\phi^{q-1}(y)\le 1$. Choosing $\ds \ep = \min \left\{
\frac{p}{2(q-1)}, \frac{1}{2} \right\}$ and plugging
\eqref{c.rom3.y} into \eqref{c.rom3.c1.1}, we discover
\begin{align}\label{c.rom3}
\begin{split}
&g\left(\frac{|w_{+}(x)-w_{+}(y)|}{|x-y|^s}\right)\frac{w_{+}(x)-w_{+}(y)}{|w_{+}(x)-w_{+}(y)|}\frac{w_+(x)\phi^q(x)-w_+(y)\phi^q(y)}{|x-y|^s}\\
&\ge \frac{p}{2}G\left(\frac{|w_{+}(x)-w_{+}(y)|}{|x-y|^s}\right)\min \{ \phi^{q}(x), \phi^{q}(y) \} - cG\left( \frac{|\phi(x)-\phi(y)|}{|x-y|^s}\max \{ w_{+}(x), w_{+}(y) \} \right).
\end{split}\end{align}
In the case (2), 
we use \eqref{growth} to have
\begin{align*}
&g\left(\frac{|w_{+}(x)-w_{+}(y)|}{|x-y|^s}\right)\frac{w_{+}(x)-w_{+}(y)}{|w_{+}(x)-w_{+}(y)|}\frac{w_+(x)\phi^q(x)-w_+(y)\phi^q(y)}{|x-y|^s}\\
&\ge g\left(\frac{w_{+}(x)-w_{+}(y)}{|x-y|^s}\right)\frac{w_{+}(x)-w_{+}(y)}{|x-y|^s}\phi^{q}(x)\ge pG\left(\frac{|w_{+}(x)-w_{+}(y)|}{|x-y|^s}\right)\min \{ \phi^{q}(x), \phi^{q}(y) \}.
\end{align*}
Therefore, we also obtain the estimate \eqref{c.rom3} in this case. Moreover, since the integrand is invariant with the exchanging of $x$ and $y$,
we again have the estimate \eqref{c.rom3} in the case (3). Consequently, we obtain
\begin{align*}
\begin{split}
\rom{3} &\ge c\int_{B_{r}}\int_{B_{r}}G\left(\frac{|w_{+}(x)-w_{+}(y)|}{|x-y|^s}\right)\min{ \{ \phi^{q}(x), \phi^{q}(x) \}}\frac{dxdy}{|x-y|^{n+s}}\\
& \quad - c\int_{B_{r}}\int_{B_{r}}G\left( \frac{|\phi(x)-\phi(y)|}{|x-y|^s}\max \{ w_{+}(x), w_{+}(y) \} \right)\frac{dxdy}{|x-y|^{n+s}}.
\end{split}
\end{align*}
To estimate $\rom{4}$, we first use Fubini's theorem to find
\begin{align*}
\begin{split}
\rom{4} &= \int_{\mr^n \setminus B_{r}}\int_{B_{r}}g\left( \frac{w_{+}(y)}{|x-y|^s} \right)\eta(x)\frac{dxdy}{|x-y|^{n+s}}\\
&\le \int_{B_{r}}w_{+}(x)\phi^{q}(x)\,dx\left( \sup_{y\, \in\, \mathrm{supp}\,\phi}\int_{\mr^{n} \setminus B_{r}}g\left(\frac{w_{+}(y)}{|x-y|^{s}}
\right)\frac{dy}{|x-y|^{n+s}}\right).
\end{split}
\end{align*}
Hence we obtain \eqref{caccio}, as $\rom{3} \le  c \rom{4}$.
\end{proof}

\begin{remark}
\label{rmk.caccio}
In Proposition \ref{lem.caccio}, the estimate \eqref{caccio} for $w_+$ (resp. $w_-$) still holds true when $u$ is a weak subsolution (resp. supersolution)
to \eqref{pde}.
\end{remark}

The second one to be derived is a logarithmic estimate. This will be used in the proof of the decay estimate for
the oscillation of weak solutions, Lemma~\ref{lem.osc}. We need the following elementary inequality.
\begin{lemma}(\cite[Lemma 3.1]{DKP2}
\label{lo.lem})
Let $q \ge 1$ and $\ep \in (0,1]$. Then
\begin{align*}
|a|^{q} \le (1+c_q\ep)|b|^{q}+(1+c_q\ep)\ep^{1-q}|a-b|^{q}
\end{align*}
for every $a, b \in \mr^{n}$. Here $c_q > 0$ depends on $n$ and $q$.
\end{lemma}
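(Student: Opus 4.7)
The plan is to reduce this vector-valued inequality to a one-variable scalar inequality via the triangle inequality, then exploit the convexity of $t \mapsto t^q$. Since $|a| \le |b| + |a-b|$ and $t \mapsto t^q$ is monotone on $[0,\infty)$, the lemma will follow once I establish the scalar statement that for all $x, y \ge 0$, $q \ge 1$ and $\ep \in (0,1]$,
\[
(x+y)^q \le (1+c_q\ep)\, x^q + (1+c_q\ep)\,\ep^{1-q}\, y^q,
\]
applied with $x=|b|$ and $y=|a-b|$. The degenerate case $q=1$ is immediate with $c_1=0$, so I would focus on $q>1$.

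For $q>1$ I would invoke Jensen's inequality with a free parameter $\theta \in (0,1)$. Writing $x+y = \theta \cdot (x/\theta) + (1-\theta) \cdot (y/(1-\theta))$ and using the convexity of $t \mapsto t^q$ yields $(x+y)^q \le \theta^{1-q} x^q + (1-\theta)^{1-q} y^q$. The remaining task is to choose $\theta$, depending only on $\ep$ and $q$, so that $\theta^{1-q} \le 1+c_q\ep$ and $(1-\theta)^{1-q} \le (1+c_q\ep)\ep^{1-q}$. Solving these for $\theta$ gives the sandwich
\[
(1+c_q\ep)^{-1/(q-1)} \le \theta \le 1 - \ep\,(1+c_q\ep)^{-1/(q-1)},
\]
and such a $\theta$ exists precisely when $(1+\ep)^{q-1} \le 1 + c_q \ep$.

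The hard part (effectively the only nontrivial step) is this last parameter calibration. For $\ep \in (0,1]$, a Taylor-expansion estimate on $(1+\ep)^{q-1}$ yields $(1+\ep)^{q-1} \le 1 + C_q \ep$ for some $C_q$ depending only on $q$, and I would then set $c_q := C_q$. Apart from this elementary bound, the argument is purely a convexity reduction followed by one application of Jensen's inequality, so I do not expect any genuine obstacle. I note in passing that the dependence on $n$ recorded in the statement is superfluous: once the triangle inequality is used, the problem is already one-dimensional.
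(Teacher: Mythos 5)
Your argument is correct. The paper does not reprove this lemma --- it simply cites Lemma~3.1 of Di~Castro--Kuusi--Palatucci \cite{DKP2} --- so there is no in-paper proof to compare against, but it is worth noting how your route relates to theirs. After the same triangle-inequality and homogeneity reduction to the scalar inequality
\begin{align*}
(1+t)^{q} \le 1 + c_q\ep + (1+c_q\ep)\,\ep^{1-q}\,t^{q}, \qquad t\ge 0,
\end{align*}
the reference proves it by a one-variable calculus argument: differentiate the difference, locate the unique interior critical point, and evaluate there, which after simplification again reduces to the same calibration inequality $(1+\ep)^{q-1}\le 1+c_q\ep$. Your convexity-with-a-free-parameter argument, writing $x+y$ as the convex combination $\theta\,(x/\theta)+(1-\theta)\,(y/(1-\theta))$ and optimizing $\theta$, reaches exactly that calibration without any differentiation, so it is both shorter and more transparent about where the constant $c_q$ comes from. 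The elementary bound $(1+\ep)^{q-1}\le 1+C_q\ep$ for $\ep\in(0,1]$ follows, e.g., from the mean value theorem with $C_q=(q-1)\max\{1,2^{q-2}\}$, and the degenerate case $q=1$ is indeed trivial with $c_1=0$. Your remark that the constant has no genuine $n$-dependence is also correct (the statement inherits an $n$ from the ambient $\mr^n$, but the triangle inequality makes the problem one-dimensional); the reference accordingly records a constant depending only on $q$.
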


\begin{proposition}
\label{lem.log}(Logarithmic estimate)
Let $u \in \mathbb{W}^{s,p}(\Omega) \cap L_{s}^{g}(\mr^{n})$ be a weak supersolution to \eqref{pde} with
$u \ge 0$ in $B_{R}(x_0) \subset \Omega$. Then for any $d>0$ and $0<r<\frac{R}{2}$, we have
\begin{align}\label{lo.in}
\int_{B_{r}}\int_{B_{r}}|\log{(u(x)+d)}-\log{(u(y)+d)}|\frac{dxdy}{|x-y|^n} \le cr^n + c\frac{r^{n+s}}{g(d/r^{s})}\mathrm{Tail}(u_{-};x_0,R)
\end{align}
for some $c=c(n,s,p,q,\lambda,\Lambda)>0$, where $B_r=B_r(x_0)$ and $B_R(x_0)=B_R$. In addition, we have the estimate
\begin{align}\label{lo}
\int_{B_{r}}|h-(h)_{B_r}|\,dx \le cr^n\left[ 1 + \frac{r^s}{g(d/r^{s})}\mathrm{Tail}(u_{-};x_0,R) \right],
\end{align}
where
\begin{align*}
h:= \min\left\{ (\log(a+d) - \log(u+d))_{+}, \log{b} \right\}, \quad a>0 \ \ \text{and}\ \ b>1.
\end{align*}
\end{proposition}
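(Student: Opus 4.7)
The plan is to mirror the logarithmic estimate of \cite[Lemma~1.3]{DKP2}, adapted to the Orlicz setting governed by $g$. Write $\bar u := u+d$ and fix a cutoff $\phi \in C_0^\infty(B_{3r/2}(x_0))$ with $0\le\phi\le 1$, $\phi\equiv 1$ on $B_r(x_0)$ and $|\nabla\phi|\le c/r$. Since $r<R/2$, we have $B_{3r/2}\subset B_R$, and because $u\ge 0$ on $B_R$, $\bar u\ge d$ on $\mathrm{supp}\,\phi$. I would test the weak supersolution inequality of Definition~\ref{sol} against the nonnegative function
\begin{equation*}
\eta(x) \;:=\; \frac{\phi^q(x)}{r^s\,g(\bar u(x)/r^s)},
\end{equation*}
which is the Orlicz analogue of the test function $\phi^p(u+d)^{1-p}$ used for the fractional $p$-Laplacian. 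Note $0\le\eta\le r^s/g(d/r^s)$ on $\mathrm{supp}\,\phi$, and this bound is precisely what produces the factor $g(d/r^s)^{-1}$ on the right-hand side of \eqref{lo.in}.

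The next step is to substitute $\eta$ into the definition and decompose the resulting integral into a local part on $B_{3r/2}\times B_{3r/2}$ and a symmetric tail part on $(\mathbb{R}^n\setminus B_{3r/2})\times B_{3r/2}$. For the local part I would derive, case by case on the signs of $\bar u(x)-\bar u(y)$ and $\phi(x)-\phi(y)$, a pointwise lower bound of logarithmic type. The core algebraic input is that \eqref{growth} makes $\log g$ quasi-Lipschitz in $\log t$ with slopes in $[p-1,q-1]$: writing $A:=g(\bar u(x)/r^s)$, $B:=g(\bar u(y)/r^s)$ with $\bar u(x)>\bar u(y)$,
\begin{equation*}
\frac{1}{B}-\frac{1}{A} \;=\; \frac{A-B}{AB} \;\gtrsim\; \frac{\log(\bar u(x)/\bar u(y))}{A},
\end{equation*}
while a further application of \eqref{growth} bounds $g(|\bar u(x)-\bar u(y)|/|x-y|^s)$ below by $A$ times a power of $(r/|x-y|)^s$ in the regime where the logarithm is non-degenerate. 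When $\phi(x)\neq\phi(y)$, the cross term is absorbed by Young's inequality \eqref{Young} together with \eqref{Young1} and \eqref{G*ineq}, leaving a residual $G$-cutoff error of the form $G(|\phi(x)-\phi(y)|/|x-y|^s)$, exactly as in the proof of Proposition~\ref{lem.caccio}. Using $|\nabla\phi|\le c/r$ and \eqref{Gineq}, this cutoff integral contributes at most $cr^n$.

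For the tail part, $u(y)\ge 0$ on $B_{3r/2}\subset B_R$ forces only $u_-$ on $\mathbb{R}^n\setminus B_R$ to give an unfavourable sign; combining $\eta\le r^s/g(d/r^s)$ on $\mathrm{supp}\,\phi$, $K\le\Lambda|x-y|^{-n}$, and the comparison $|x-y|\approx|x-x_0|$ for $x\notin B_R$ and $y\in B_{3r/2}$ (which follows from $|y-x_0|\le\tfrac{3r}{2}\le\tfrac{3}{4}|x-x_0|$), this part is bounded by $cr^{n+s}g(d/r^s)^{-1}\mathrm{Tail}(u_-;x_0,R)$. Assembling the pieces and restricting the positive term on the left to $B_r\times B_r$, where $\phi\equiv 1$, yields \eqref{lo.in}. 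Finally, \eqref{lo} follows immediately: $h$ is a composition of $1$-Lipschitz operations with $\log\bar u$, so $|h(x)-h(y)|\le|\log\bar u(x)-\log\bar u(y)|$, and the standard oscillation bound
\begin{equation*}
\int_{B_r}|h-(h)_{B_r}|\,dx\;\le\;\frac{(2r)^n}{|B_r|}\iint_{B_r\times B_r}\frac{|h(x)-h(y)|}{|x-y|^n}\,dxdy
\end{equation*}
combined with \eqref{lo.in} produces \eqref{lo}.

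The hardest step will be the pointwise algebraic inequality in the local part. Under only the doubling bounds \eqref{growth} on $g$, instead of the exact $p$-power identities available in \cite{DKP2}, producing a clean logarithmic lower bound for $g(|\bar u(x)-\bar u(y)|/|x-y|^s)(\eta(y)-\eta(x))$ demands a careful case split according to whether $\bar u(x)/\bar u(y)$ is close to or far from $1$ and whether $|x-y|$ is small compared to $r$, together with a calibration of Young's inequality that keeps the error purely of cutoff type with no residual $\bar u$-dependence.
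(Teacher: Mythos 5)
Your overall plan (test function, local/tail decomposition, absorbing cut-off cross terms, tail estimate, and passage from \eqref{lo.in} to \eqref{lo}) matches the paper's, but the crucial ``local part'' step has a genuine gap, and in fact the choice of test function already commits you to it.

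The claimed algebraic input — that \eqref{growth} makes $\log g$ quasi-Lipschitz in $\log t$ with slopes in $[p-1,q-1]$, hence $\frac{A-B}{B}\gtrsim\log(\bar u(x)/\bar u(y))$ with $A=g(\bar u(x)/r^s)$, $B=g(\bar u(y)/r^s)$ — is not a consequence of \eqref{growth}. What \eqref{growth} gives (via $tg(t)\approx G(t)$ and the monotonicity of $G(t)/t^p$, $G(t)/t^q$) is only the \emph{almost}-monotonicity
\[
\frac{p}{q}\left(\frac{t}{s}\right)^{p-1}\le \frac{g(t)}{g(s)}\le \frac{q}{p}\left(\frac{t}{s}\right)^{q-1},\qquad t\ge s,
\]
and the multiplicative factor $p/q<1$ destroys the quasi-Lipschitz property for $t/s$ near $1$: the lower bound drops below $1$, whereas you need $g(t)/g(s)-1\ge c\log(t/s)$ with a \emph{uniform} $c>0$. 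Concretely, \eqref{growth} allows $g$ to be nearly constant on an interval like $[t,\,2t]$ (for $p$ close to $1$ and $q$ large one can push $g(2t)/g(t)$ arbitrarily close to $1$ while keeping $g$ strictly increasing and $p\le tg(t)/G(t)\le q$), so your pointwise bound for the integrand fails exactly in the regime where $\bar u(x)/\bar u(y)$ is of order one, which is also the regime that dominates the left-hand side of \eqref{lo.in}.

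This is also why the test function matters. You take $\eta=\phi^q/(r^s g(\bar u/r^s))$, whose $t$-derivative involves $g'$, a quantity \eqref{growth} does not control, so the mean-value-theorem trick that works in the $p$-Laplacian case has no analogue here. The paper instead takes $\eta=v\phi^q/G(v/r^s)$ (comparable to yours pointwise, but not equal) precisely because
\[
\frac{d}{dt}\,\frac{t}{G(t/r^s)}=\frac{1}{G(t/r^s)}\Bigl(1-\frac{(t/r^s)g(t/r^s)}{G(t/r^s)}\Bigr)\le\frac{1-p}{G(t/r^s)},
\]
which \emph{is} controlled by \eqref{growth}. Even with this better test function, the paper does not get a one-line logarithmic inequality; it splits into $v(y)\le v(x)\le 2v(y)$ and $v(x)>2v(y)$ and produces the bound $F(x,y)\le-\tilde c\,|\log v(x)-\log v(y)|\min\{\phi(x),\phi(y)\}^q+\text{(cut-off errors)}$ by two distinct routes: a mean-value argument plus the elementary $\log v(x)-\log v(y)\le (v(x)-v(y))/v(y)$ and $G(a)/G(b)+1\ge a/b$ in the first regime, and the almost-monotonicity (which \emph{is} usable once the ratio exceeds $2$) together with $\log t\le t^{p-1}/(p-1)$ in the second. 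You correctly flag the local-part algebra as ``the hardest step,'' but the route you sketch for it doesn't go through; replacing it with the paper's two-regime argument and switching to $\eta=v\phi^q/G(v/r^s)$ would repair it. The tail estimate and the deduction of \eqref{lo} from \eqref{lo.in} in your sketch are fine and agree with the paper.
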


\begin{proof}
Write $v(x):=u(x)+d$ and fix a cut-off function $\ds \phi \in C^{\infty}_{0}(B_{3r/2})$ such that $0 \le \phi \le 1$, $|D\phi| \le \frac{4}{r}$ and $\phi \equiv 1$ in $B_{r}$.
Since $\frac{v}{G(v/r^s)}$ is nonnegative in $B_R$ and  belongs to $\mathbb{W}^{s,p}(\Omega)$,
we can take $\eta=\frac{v\phi^{q}}{G(v/r^s)}$ as a test function to find
\begin{align}\label{lo.sp}
\begin{split}
0&\le \int_{B_{2r}}\int_{B_{2r}}g\left(\frac{|v(x)-v(y)|}{|x-y|^s}\right)\frac{v(x)-v(y)}{|v(x)-v(y)|}(\eta(x)-\eta(y))K(x,y)\frac{dxdy}{|x-y|^{s}}\\
&\quad +2\int_{\mr^{n} \setminus B_{2r}}\int_{B_{2r}}g\left(\frac{|u(x)-u(y)|}{|x-y|^s}\right)\frac{u(x)-u(y)}{|u(x)-u(y)|}\eta(x)K(x,y)\frac{dxdy}{|x-y|^{s}}\\
&=:\rom{1}+\rom{2}.
\end{split}
\end{align}
We define
\begin{align*}
F=F(x,y):=g\left(\frac{|v(x)-v(y)|}{|x-y|^s}\right)\frac{v(x)-v(y)}{|v(x)-v(y)|}\frac{\eta(x)-\eta(y)}{|x-y|^{s}}, \quad x,y\in B_{2r}.
\end{align*}
Note that $F(x,y)=F(y,x)$.
We divide the remaining proof into four steps.

\smallskip
\textit{Step 1.}
We first assume that $v(y) \le v(x) \le 2v(y)$ for $x,y\in B_{2r}$ to assert that
\begin{align}\label{estimateF}
F(x,y) \le - \tilde c (\log v(x) -\log v(y))\phi(x)^q+c \left( \frac{|x-y|}{r} \right)^{s}+
+c \left( \frac{|x-y|}{r} \right)^{(1-s)p}
\end{align}
for some small constant $\tilde c>0$ and large constant $c>0$ depending on $n,p$ and $q$.
To prove this, let us suppose $\phi(x) \ge \phi(y)$. By the definition of $\eta$, we get
\begin{align}\label{lo.rom1.sp}
\begin{split}
F(x,y)&= g\left(\frac{v(x)-v(y)}{|x-y|^s}\right)\left(\frac{v(x)}{G(v(x)/r^s)}-\frac{v(y)}{G(v(y)/r^s)}\right)\frac{\phi^{q}(x)}{|x-y|^{s}}\\
&\qquad + g\left(\frac{v(x)-v(y)}{|x-y|^s}\right)\frac{v(y)}{G(v(y)/r^{s})}\frac{\phi^{q}(x)-\phi^{q}(y)}{|x-y|^{s}}\\
&=: F_{1}(x,y)+F_{2}(x,y).
\end{split}
\end{align}
Applying Mean Value Theorem to the mappings $t\mapsto \frac{t}{G(t/r^s)}$ for $v(y)\le t \le v(x)$,
and $t\mapsto t^q$ for $\phi(y) \le t \le \phi(x)$, respectively, we have
\begin{align}\label{lo.mvt1}
\frac{v(x)}{G(v(x)/r^s)}-\frac{v(y)}{G(v(y)/r^s)} \le -(p-1)\frac{v(x)-v(y)}{G(v(x)/r^{s})}
\end{align}
and
\begin{align}\label{lo.mvt2}
\phi^{q}(x)-\phi^{q}(y) \le q\phi^{q-1}(x)(\phi(x)-\phi(y)).
\end{align}
For the inequality \eqref{lo.mvt1}, we have used \eqref{growth}.
Putting \eqref{lo.mvt1} into $F_{1}$ and using \eqref{growth} and
the fact that $v(x) \le 2v(y)$, we have
\begin{align}\label{lo.rom1.c1}
\begin{split}F_{1} & \le -(p-1)g\left(\frac{v(x)-v(y)}{|x-y|^s}\right)\frac{v(x)-v(y)}{|x-y|^{s}}\frac{\phi^{q}(x)}{G(v(x)/r^s)}\\
&  \le  -c_1 G\left(\frac{v(x)-v(y)}{|x-y|^s}\right)\frac{\phi^{q}(x)}{G(v(y)/r^s)}
\end{split}
\end{align}
for some small constant $c_1=c_1(p,q)>0$.

We use \eqref{lo.mvt2} and recall \eqref{Young} with $\ds \ep=\min \left\{ \frac{c_{1}}{2q(q-1)},
\frac{1}{2} \right\}$ and \eqref{Young1}, to discover
\begin{align}\label{lo.rom1.c2}
\begin{split}
F_{2} & \le qg\left(\frac{v(x)-v(y)}{|x-y|^s}\right)\phi^{q-1}(x)\frac{\phi(x)-\phi(y)}{|x-y|^s}\frac{v(y)}{G(v(y)/r^s)}\\
& \le q\left[ \ep (q-1) G\left(\frac{v(x)-v(y)}{|x-y|^s}\right)\phi^{q}(x)+\ep^{1-q} G\left( \frac{\phi(x)-\phi(y)}{|x-y|^s}v(y) \right) \right]\frac{1}{G(v(y)/r^s)}\\
& \le \left[\frac{c_1}{2}G\left(\frac{v(x)-v(y)}{|x-y|^s}\right)\phi^{q}(x)+c G \left( \frac{\phi(x)-\phi(y)}{|x-y|^s}v(y) \right) \right]\frac{1}{G(v(y)/r^s)}.
\end{split}
\end{align}
We then combine \eqref{lo.rom1.sp}, \eqref{lo.rom1.c1}, and \eqref{lo.rom1.c2} and use the fact that
$|D\phi| \le \frac{4}{r}$ and $|x-y|\le 4r$ for $x,y\in B_{2r}$, to obtain \eqref{estimateF}.

We next suppose $\phi(x) < \phi(y)$. Using \eqref{lo.mvt1} and \eqref{growth}, we have
\begin{align*}
F (x,y)&= g\left(\frac{v(x)-v(y)}{|x-y|^s}\right)\left(\frac{\phi^{q}(x)v(x)}{G(v(x)/r^s)}-\frac{\phi^{q}(y)v(y)}
{G(v(y)/r^s)}\right)\frac{1}{|x-y|^{s}}\\
&\le g\left(\frac{v(x)-v(y)}{|x-y|^s}\right)\left(\frac{v(x)}{G(v(x)/r^s)}-\frac{v(y)}{G(v(y)/r^s)}\right)\frac{\phi^{q}(y)}{|x-y|^{s}}\\
&\le -cg\left(\frac{v(x)-v(y)}{|x-y|^s}\right)\frac{v(x)-v(y)}{|x-y|^{s}}\frac{\phi^{q}(y)}{G(v(x)/r^s)}\\
&\le -cG\left(\frac{v(x)-v(y)}{|x-y|^s}\right)\frac{\phi^{q}(x)}{G(v(y)/r^s)}.
\end{align*}
Therefore we also have
\begin{align}\label{F1}
F(x,y)\le - c G\left(\frac{v(x)-v(y)}{|x-y|^s}\right)\frac{\phi^{q}(x)}{G(v(y)/r^s)}
+c \left( \frac{|x-y|}{r} \right)^{(1-s)p}.
\end{align}
In addition, by Mean Value Theorem,
\begin{align*}
\log v(x) -\log v(y) &\le  \frac{v(x)-v(y)}{v(y)} =\frac{(v(x)-v(y))/|x-y|^s}{v(y)/r^s} \frac{|x-y|^s}{r^s}\\
&\le  \left\{G\left(\frac{v(x)-v(y)}{|x-y|^s}\right) \frac{1}{G(v(y)/r^s)} +1\right\} \frac{|x-y|^s}{r^s}\\
&\le c G\left(\frac{v(x)-v(y)}{|x-y|^s}\right) \frac{1}{G(v(y)/r^s)} + \frac{|x-y|^s}{r^s}.
\end{align*}
For the second inequality we have used the fact that $\frac{G(t)}{t}$ is increasing for $t$.
This estimate and \eqref{F1} imply finally \eqref{estimateF}.

\smallskip
\textit{Step 2.}
We now assume that $v(x) > 2v(y)$ for $x,y\in B_{2r}$ to claim that
\begin{align}\label{estimateF1}
F(x,y)\le -\tilde c (\log v(x)-\log v(y)) \phi^{q}(y)+c\left(\frac{|x-y|}{r}\right)^{s(p-1)} +c\left(\frac{|x-y|}{r}\right)^{(1-s)q}
\end{align}
for some small constant $\tilde c>0$ and large constant $c>0$ depending on $n,p$ and $q$.
To this end, we recall the definition of $\eta$ to see that
\begin{align*}
F(x,y)&= g\left(\frac{v(x)-v(y)}{|x-y|^s}\right)\left(\frac{v(x)}{G(v(x)/r^s)}-\frac{v(y)}{G(v(y)/r^s)}\right)\frac{\phi^{q}(y)}{|x-y|^{s}}\\
&\quad +g\left(\frac{v(x)-v(y)}{|x-y|^s}\right)\frac{v(x)}{G(v(x)/r^{s})}\frac{\phi^{q}(x)-\phi^{q}(y)}{|x-y|^{s}}\\
&=: F_{3}(x,y)+F_{4}(x,y).
\end{align*}
Since $\frac{t}{G(t)}$ is decreasing for $t$, we have
\begin{align*}
F_{3} &\le g\left( \frac{v(x)-v(y)}{|x-y|^s} \right)\left( \frac{2v(y)}{G(2v(y)/r^s)}-\frac{v(y)}{G(v(y)/r^s)} \right) \frac{\phi^{q}(y)}{|x-y|^{s}}\\
&= g\left( \frac{v(x)-v(y)}{|x-y|^s} \right)\frac{v(y)}{G(v(y)/r^s)}\left( 2\frac{G(v(y)/r^{s})}{G(2v(y)/r^{s})} - 1 \right)\frac{\phi^{q}(y)}{|x-y|^{s}}\\
&\le -\left( 1 - \frac{1}{2^{p-1}} \right)g\left( \frac{v(x)-v(y)}{|x-y|^s} \right)\frac{v(y)}{G(v(y)/r^s)}\frac{\phi^{q}(y)}{|x-y|^{s}}.
\end{align*}
On the other hand, in light of Lemma \ref{lo.lem}, we find that for $\ep \in (0,1)$,
\begin{align*}
F_{4} &\le c_q\ep g\left(\frac{v(x)-v(y)}{|x-y|^s}\right)\frac{v(x)}{G(v(x)/r^s)}\frac{\phi^{q}(y)}{|x-y|^{s}}\\
&\qquad +c\ep^{1-q}g\left(\frac{v(x)-v(y)}{|x-y|^s}\right)\frac{v(x)}{G(v(x)/r^s)}\frac{|\phi(x)-\phi(y)|^{q}}{|x-y|^{s}}.
\end{align*}
In addition, using the fact that $\frac{t}{G(t)}$ is decreasing for $t$, $2v(y) < v(x)$, $|D\phi| \le \frac{c}{r}$, $|x-y|\le 4r$ for $x,y\in B_{2r}$ and $tg(t)\le q G(t)$, we discover
\begin{align*}
F_{4} \le c_q\ep g\left(\frac{v(x)-v(y)}{|x-y|^s}\right)\frac{v(y)}{G(v(y)/r^s)}\frac{\phi^{q}(y)}{|x-y|^{s}}+c\ep^{1-q}\left(\frac{|x-y|}{r}\right)^{(1-s)q}.
\end{align*}
We then choose $\ds \ep=\min \left\{ \frac{1}{2c_{q}}\left( 1 - \frac{1}{2^{p-1}} \right), \frac{1}{2} \right\}$, and combine the above estimates to discover
\begin{align*}
F \le - c g\left(\frac{v(x)-v(y)}{|x-y|^s}\right)\frac{v(y)}{G(v(y)/r^s)}\frac{\phi^{q}(y)}{|x-y|^{s}}+c\left(\frac{|x-y|}{r}\right)^{(1-s)q}.
\end{align*}
Note
\begin{align*}
\frac{v(y)}{G(v(y)/r^s)}\frac{1}{|x-y|^{s}} \ge \frac{1}{4^s}\frac{v(y)/r^s}{G(v(y)/r^s)} \ge \frac{p}{4^s} \frac{1}{g(v(y)/r^s)}
\end{align*}
to have
\begin{align}\label{F2}
F(x,y)\le -c g\left(\frac{v(x)-v(y)}{|x-y|^s}\right)\frac{\phi^{q}(y)}{g(v(y)/r^s)} +c\left(\frac{|x-y|}{r}\right)^{(1-s)q}.
\end{align}
Moreover, since $v(x) > 2v(y)$,
\begin{align*}
\log v(x) -\log v(y) \le \log 2(v(x)-v(y)) -\log v(y) \le c \left(\frac{2(v(x)-v(y))}{v(y)} \right)^{p-1},
\end{align*}
where we have used the fact that $\log t <  \frac{t^{p-1}}{p-1}$. Note that
\begin{align*}
\frac{g(s)}{s^{p-1}} \le q \frac{G(s)}{s^p} \le  q \frac{G(t)}{t^p}  \le \frac{q}{p} \frac{g(t)}{t^{p-1}} \quad \mbox{for any }\ t\ge s\ge 0,
\end{align*}
to discover
\begin{align*}
\log v(x) -\log v(y) & \le c \left(\frac{(v(x)-v(y))/|x-y|^s}{v(y)/r^s} \frac{|x-y|^s}{r^s} \right)^{p-1}\\
&\le c g\left(\frac{v(x)-v(y)}{|x-y|^s}\right)\frac{1}{g(v(y)/r^s)} +c \left( \frac{|x-y|}{r} \right)^{s(p-1)}.
\end{align*}
This and \eqref{F2} imply the estimate \eqref{estimateF1}.

\smallskip
\textit{Step 3.}
We next estimate $\rom{1}$ in \eqref{lo.sp}.
We recall \eqref{estimateF} when $v(y)\le v(x)\le 2v(y)$, and \eqref{estimateF1} when $v(x)>2v(y)$, and use the fact $F(x,y)=F(y,x)$,
to discover that for every $x,y\in B_{2r}$,
\begin{align*}
F(x,y) &\le  - \tilde c \, |\log v(x) -\log v(y)| \min\{\phi(x),\phi(y)\}^q\\
&\qquad +c \left( \frac{|x-y|}{r} \right)^{s}
+c \left( \frac{|x-y|}{r} \right)^{(1-s)p}+c\left(\frac{|x-y|}{r}\right)^{s(p-1)}.
\end{align*}
Then since $\phi\equiv 1$ in $B_r$ and $K(x,y)$ satisfies  \eqref{kernel}, we have
\begin{align}\label{estimateI}\begin{split}
\rom{1}&\le -\frac{\tilde c}{\lambda}  \int_{B_{r}}\int_{B_{r}}|\log{v(x)}-\log{v(y)}|\frac{dxdy}{|x-y|^n}  \\
&\quad  + c  \int_{B_{2r}}\int_{B_{2r}} \left[ \left( \frac{|x-y|}{r} \right)^{s} + \left(\frac{|x-y|}{r} \right)^{(1-s)p}+c\left(\frac{|x-y|}{r}\right)^{s(p-1)} \right]\frac{dxdy}{|x-y|^n}\\
&\le -\frac{\tilde c}{\lambda}  \int_{B_{r}}\int_{B_{r}}|\log{v(x)}-\log{v(y)}|\frac{dxdy}{|x-y|^n} +c r^n.
\end{split}
\end{align}

\smallskip
\textit{Step 4.}
We next estimate $\rom{2}$. Observe that for $x \in B_{R}$ and $y \in \mr^{n}$,
\begin{align*}
g\left(\frac{|u(x)-u(y)|}{|x-y|^{s}}\right)\frac{u(x)-u(y)}{|u(x)-u(y)|} &\le g\left(\frac{(u(x)-u(y))_{+}}{|x-y|^{s}}\right)\le c\left[ g\left(\frac{u(x)}{|x-y|^s}\right)+g\left(\frac{u(y)_{-}}{|x-y|^s}\right) \right].
\end{align*}
Recalling $\mathrm{supp}\, \phi \subset B_{3r/2}$, we have
\begin{align*}
\begin{split}
\rom{2} &\le c\int_{\mr^{n} \setminus B_{2r}}\int_{B_{3r/2}}g\left(\frac{(u(x)-u(y))_{+}}{|x-y|^s}\right)\frac{r^{s}}{g(v(x)/r^{s})}\frac{dxdy}{|x-y|^{n+s}}\\
&\le c\int_{B_{R} \setminus B_{2r}}\int_{B_{3r/2}}g\left(\frac{(u(x)-u(y))_{+}}{|x-y|^s}\right)\frac{r^{s}}{g(v(x)/r^{s})}\frac{dxdy}{|x-y|^{n+s}}\\
&\quad +c\int_{\mr^{n} \setminus B_{R}}\int_{B_{3r/2}}g\left(\frac{u(x)}{|x-y|^s}\right)\frac{r^{s}}{g(v(x)/r^{s})}\frac{dxdy}{|x-y|^{n+s}}\\
&\quad +c\int_{\mr^{n} \setminus B_{R}}\int_{B_{3r/2}}g\left(\frac{u(y)_{-}}{|x-y|^s}\right)\frac{r^{s}}{g(v(x)/r^{s})}\frac{dxdy}{|x-y|^{n+s}}\\
&=:\rom{2}_{1}+\rom{2}_{2}+\rom{2}_{3}.
\end{split}
\end{align*}
Since $u \ge 0$ in $B_{R}$ and  $v=u+d$, we see that
\begin{align*}
(u(x)-u(y))_{+} \le v(x)
 \ \mbox{ and } \
u(x) \le v(x), \quad x,y\in B_R.
\end{align*}
Thus
\begin{align*}
\rom{2}_{1} &\le c\int_{B_{R} \setminus B_{2r}}\int_{B_{3r/2}}g\left(\frac{(u(x)-u(y))_{+}}{r^s}\right)\frac{r^{s}}{g(v(x)/r^{s})}\frac{dxdy}{|x-y|^{n+s}}\\
&\le cr^s \int_{B_{R} \setminus B_{2r}}\int_{B_{3r/2}}\frac{dxdy}{|x-y|^{n+s}}  \le cr^s \int_{B_{3r/2}}\int_{\R^n \setminus B_{r/2}(x)}
\frac{dydx}{|x-y|^{n+s}}  \le c r^n
\end{align*}
and
\begin{align*}
\rom{2}_{2} &\le c\int_{\mr^{n} \setminus B_{R}}\int_{B_{3r/2}}g\left(\frac{u(x)}{r^s}\right) \frac{r^s}{g(v(x)/r^s)}
\frac{dxdy}{|x-y|^{n+s}}\le cr^s \int_{\mr^{n} \setminus B_{2r}}\int_{B_{3r/2}} \frac{dxdy}{|x-y|^{n+s}}  \le c r^n.
\end{align*}
Observing that for any $x \in B_{3r/2}$ and $y \in \mr^{n} \setminus B_{2r}$
\begin{align*}
\frac{|y-x_{0}|}{|x-y|} \le 1+\frac{|x-x_{0}|}{|x-y|} \le 1+\frac{3r/2}{2r-(3r/2)} = 4,
\end{align*}
we find
\begin{align*}
\rom{2}_{3} \le c\int_{\mr^{n} \setminus B_{R}}\int_{B_{3r/2}}g\left(\frac{u(y)_{-}}{|y-x_0|^s}\right)\frac{r^{s}}{g(d/r^{s})}\frac{dxdy}{|y-x_0|^{n+s}}\le c\frac{r^{n+s}}{g(d/r^{s})}\mbox{Tail}(u_{-};x_0,R).
\end{align*}
Consequently, we have
\begin{align*}
\rom{2} \le cr^{n} + c\frac{r^{n+s}}{g(d/r^{s})}\mbox{Tail}(u_{-};x_0,R).
\end{align*}
Inserting this estimate and \eqref{estimateI} into \eqref{lo.sp}, we get \eqref{lo.in}.

\smallskip
\textit{Step 5.}
Now we are ready to prove the estimate \eqref{lo}. Observe that
\begin{align*}
\int_{B_{r}}|h-(h)_{B_{r}}|dx \le c\int_{B_r}\int_{B_r}|h(x)-h(y)|\frac{dxdy}{|x-y|^{n}}.
\end{align*}
Since $h(x)$ is a truncation of $\log{v(x)}$, 
\begin{align*}
\int_{B_r}\int_{B_r}|h(x)-h(y)|\frac{dxdy}{|x-y|^{n}} \le c\int_{B_{r}}\int_{B_{r}}|\log{v(x)}-\log{v(y})|\frac{dxdy}{|x-y|^{n}}.
\end{align*}
Combining \eqref{lo.in} and the above inequalities, we finally obtain \eqref{lo}.
\end{proof}

\section{Proof of Theorem~\ref{mainthm}}
\label{sec4}
\subsection{Local Boundedness}
This subsection is devoted to the proof of the local boundedness of weak solutions to \eqref{pde} with the estimate \eqref{lb1} in
Theorem~\ref{mainthm}. Key ingredients of the proof are the Caccioppoli type estimate, Proposition~\ref{lem.caccio}, and
the Sobolev-Poincar\'e type inequality below.  We notice that the Sobolev inequality, more precisely, the Sobolev-Poincar\'e inequality
for the fractional Orlicz-Sobolev space $W^{s,G}(B_r)$ is well known in terms of the Luxemburg norms. However, it does not directly imply
a certain integral version of the Sobolev-Poincar\'e inequality. For the sake of completeness, we need to prove the following
Sobolev-Poincar\'e inequality for functions in $W^{s,G}(B_r)$.

\begin{lemma}\label{lem.sobopoin}(Sobolev-Poincar\'e inequality)
Let $s\in(0,1)$. Then there exists $\theta=\theta(n,s)>1$ such that if
$G$ is an N-function satisfying the $\Delta_2$ condition \eqref{del2} and the $\nabla_2$ condition \eqref{na2} with constants $\kappa$ and $l$,
and $f\in W^{s,G}(B_r)$, then
\begin{align}\label{sobopoin}
\left(\Xint-_{B_r}G\left(\frac{|f-(f)_{B_r}|}{r^s}\right)^{\theta}\,dx\right)^{\frac{1}{\theta}} \le c \Xint-_{B_r}\int_{B_r} G\left(\frac{|f(x)-f(y)|}{|x-y|^s}\right)\,dy dx,
\end{align}
where $c=c(n,s,\kappa, l)>0$.
\end{lemma}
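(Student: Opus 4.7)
The plan is to derive \eqref{sobopoin} by transferring the classical fractional $p$-Sobolev-Poincar\'e inequality to the Orlicz setting, using a Maz'ya-type dyadic truncation based on the growth condition \eqref{Gineq}.

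First, I would reduce to the case $r=1$ and $(f)_{B_1}=0$. The rescaling $\tilde f(y):=f(ry)/r^s$ preserves both sides of \eqref{sobopoin} with the same structural constants, and subtracting the mean is harmless since only $f-(f)_{B_r}$ appears. Then I invoke the classical fractional Sobolev-Poincar\'e inequality: for every $v\in W^{s,p}(B_1)$ with $(v)_{B_1}=0$, there exists $\chi=\chi(n,s,p)>1$ (for instance $\chi=n/(n-sp)$ when $sp<n$, and any $\chi>1$ otherwise) with
\begin{align*}
\left( \int_{B_1} |v|^{p\chi}\,dx \right)^{1/\chi}
\le c \int_{B_1}\int_{B_1} \frac{|v(x)-v(y)|^p}{|x-y|^{n+sp}}\,dy\,dx.
\end{align*}
This settles the model case $G(t)=t^p$ with $\theta=\chi$.

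To handle a general N-function $G$ satisfying the $\Delta_2$ and $\nabla_2$ conditions, I would decompose $f$ dyadically. For each $k\in\mz$, set $A_k:=\{x\in B_1:2^k<|f(x)|\le 2^{k+1}\}$ and define the truncation
\begin{align*}
f_k(x):=\mathrm{sgn}(f(x))\,\min\bigl\{(|f(x)|-2^{k-1})_+,\,2^{k-1}\bigr\},
\end{align*}
so that $f_k$ is supported in $\{|f|>2^{k-1}\}$, equals $2^{k-1}\mathrm{sgn}(f)$ on $\{|f|\ge 2^k\}\supset A_k$, and satisfies $|f_k(x)-f_k(y)|\le|f(x)-f(y)|$ pointwise. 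Applying the $L^p$-Sobolev-Poincar\'e above to each $f_k$ (after centering) and using the plateau condition yields
\begin{align*}
(2^k)^p\,|A_k|^{1/\chi}\le c\int_{B_1}\int_{B_1}\frac{|f(x)-f(y)|^p}{|x-y|^{n+sp}}\,\mathbf{1}_{U_k}(x,y)\,dy\,dx,
\end{align*}
where $U_k:=\{(x,y):\max(|f(x)|,|f(y)|)>2^{k-1}\}$ reflects the support of $f_k$. Multiplying each such inequality by $G(2^k)^\theta/(2^k)^{p}$ and summing over $k\in\mz$, the $\Delta_2$ and $\nabla_2$ conditions allow reconstruction of $\int_{B_1}G(|f|)^\theta\,dx$ on the left, while on the right the growth of $G$ recombines the dyadic sum into a single modular $G$-integral, giving \eqref{sobopoin}.

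The main obstacle is the recombination on the right-hand side: the telescoping sum $\sum_k G(2^k)^\theta(2^k)^{-p}\mathbf{1}_{U_k}(x,y)|f(x)-f(y)|^p$ must be controlled by the modular of $G(|f(x)-f(y)|/|x-y|^s)$ appearing in \eqref{sobopoin}. This forces a careful choice of $\theta=1+\delta>1$ with $\delta$ small and depending only on $n,s,\kappa,l$; with the correct $\delta$ the $p$-$q$ growth \eqref{Gineq} (through $G(2^{k+1})\asymp G(2^k)$ from $\Delta_2$ together with the monotonicity of $G(t)/t^p$ and $G(t)/t^q$) telescopes the geometric series, while the $\nabla_2$ constant $l$ absorbs the resulting multiplicative errors. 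Fixing such $\theta$ then completes the proof.
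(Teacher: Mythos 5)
The approach you sketch (Maz'ya-type dyadic truncation applied to the $L^p$ fractional Sobolev--Poincar\'e inequality) is genuinely different from the paper's proof. The paper instead proves a pointwise fractional potential bound
\[
|f(x)-(f)_{B_r}| \le c\int_{B_r}\Big[\int_{B_r}\frac{|f(y)-f(z)|}{|y-z|^{n+s/2}}\,dz\Big]\frac{dy}{|x-y|^{n-s/2}}
\]
by a chaining argument, then uses Jensen's inequality for the convex function $G$ to pass $G$ through the averaged kernels, and finally exploits integrability of the kernel $|x-y|^{-(n-s/2)\theta}$ for $\theta<n/(n-s/2)$. No truncation is involved, and $\Delta_2$/$\nabla_2$ enter only through \eqref{Gineq}.

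Your key recombination step, however, has a genuine gap. After multiplying the $k$-th truncation inequality by the weight and summing, you must control a pointwise sum of the schematic form
\[
\sum_{k}\frac{G(2^k)}{(2^k)^{p}}\,\frac{|f_k(x)-f_k(y)|^{p}}{|x-y|^{sp}} \ \ \text{by}\ \  G\!\left(\frac{|f(x)-f(y)|}{|x-y|^{s}}\right).
\]
This fails whenever $|f(x)|$ and $|f(y)|$ are both of size $M$ while $t:=|f(x)-f(y)|\ll M$. Indeed, take $f(x)=M=2^{N}$ and $f(y)=M-t$ with $t<2^{N-2}$: then $|f_k(x)-f_k(y)|=0$ for $k\ne N$ and $=t$ for $k=N$, so the sum evaluates to $G(M)M^{-p}t^{p}\,|x-y|^{-sp}$. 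Since $G(s)/s^{p}$ is nondecreasing by \eqref{growth}, this is at least $G(t)\,|x-y|^{-sp}$, and the gap can be as large as $(M/t)^{q-p}$ (e.g. for $G(s)\approx s^{q}$), which cannot be absorbed by the $\Delta_2$ or $\nabla_2$ constants for a fixed $\theta$. The dyadic decomposition ``sees'' the absolute level $M$ of the function, while the target modular only sees the increment $t$; this mismatch is precisely what breaks the telescoping on the right-hand side in the nonlocal setting, in contrast to the local case where $\sum_k|\nabla f_k|^p=|\nabla f|^p$ exactly. Two smaller issues also need attention: the Sobolev--Poincar\'e inequality for $f_k$ requires $f_k$ to vanish on a set of measure comparable to $|B_1|$ (a median normalization, not a mean-zero one), and the multiplier should be $G(2^k)/(2^k)^p$ rather than $G(2^k)^\theta/(2^k)^p$ to match the $\ell^1\hookrightarrow\ell^\theta$ estimate $\bigl(\sum_k G(2^k)^\theta|A_k|\bigr)^{1/\theta}\le\sum_k G(2^k)|A_k|^{1/\theta}$ on the left.
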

\begin{proof}
We first show that
\begin{equation}\label{fractionalpotential}
|f(x)-(f)_{B_r}| \le c \int_{B_r} \left[\int_{B_r}\frac{ |f(y)-f(z)|}{|y-z|^{n+s}}\,dz\right] \frac{dy}{|x-y|^{n-s}}, \quad \text{a.e. } x\in B_r,
\end{equation}
by using a standard chain argument (see for instance \cite{DLO} and references there in). Fix any Lebesgue's point $x\in B_r$ for $f$. For each $i=0,1,2,\dots$, set $r_i=2^{-i}r$. Then there exists a sequence $\{B^i\}_{i=1}^\infty$ of balls in $B_r$ such that $x\in B^i$, $B^i\subset B_{2r_i}(x)\cap B_r$, $B^{i+1}\subset B^i$, $r_{i} \le  (\text{the radius of }B^i)\le 2r_i$. In particular, we choose $B^0=B_r$ and $B^i=B_{r_i}(x)$ for every $i$ with $r_i \le \mathrm{dist}(x,\partial B_r)$. Then,
\[\begin{split}
|f(x)-(f)_{B_r}| &\le \sum_{i=0}^\infty |(f)_{B^{i+1}}-(f)_{B^i}| \le \sum_{i=0}^\infty \Xint-_{B^{i+1}}|f(y)-(f)_{B^{i}}|\,dy  \\
&\le c \sum_{i=0}^\infty r_i^{-n}\int_{B^{i}}|f(y)-(f)_{B^{i}}|\,dy  \le c \sum_{i=0}^\infty r_i^{-n+s}\int_{B^{i}}\int_{B^{i}}\frac{ |f(y)-f(z)|}{|y-z|^{n+s}}\,dz\,dy.  \\
\end{split}\]
Set
\[
h(y):= \int_{B_r}\frac{ |f(y)-f(z)|}{|y-z|^{n+s}}\,dz, \quad y\in B_r.
\]
Then
\begin{eqnarray*}
|f(x)-(f)_{B_r}| & \leq & c \sum_{i=0}^\infty r_i^{-n+s}\int_{B_{2r_i}(x)\cap B_r}h(y)\,dy \\
&   \leq  & c \sum_{i=0}^\infty \sum_{j=i}^\infty 2^{(n-s)i}r\int_{(B_{2r_{j}}(x)\setminus B_{2r_{j+1}}(x))\cap B_r}h(y)\,dy  \\
&   =  &  c \sum_{j=0}^\infty \left(\sum_{i=0}^j 2^{(n-s)(i-j)}\right)\int_{(B_{2r_{j}}(x)\setminus B_{2r_{j+1}}(x))\cap B_r}r_j^{-n+s} h(y)\,dy  \\
&   \leq  & c \sum_{j=0}^\infty \int_{(B_{2r_{j}}(x)\setminus
B_{2r_{j+1}}(x))\cap B_r}\frac{h(y)}{|x-y|^{n-s}}\,dy \\
&   =  & c \int_{B_r} \frac{h(y)}{|x-y|^{n-s}}\,dy, \end{eqnarray*}
and this is \eqref{fractionalpotential}.

We next prove the desired estimate following the argument in \cite[Thoerem 7]{DE}. To do this, note that for $s>0$ there exists $c(n,s)\ge 1$
such that
\begin{align}
\label{equivalence}
\frac{1}{c(n,s)}\le r^{-s}\int_{B_r}\frac{1}{|x-y|^{n-s}}\,dy  \le c(n,s), \quad \mbox{for every }\ x\in B_r.
\end{align}
Using \eqref{fractionalpotential} with $s$ replaced by $\frac{s}{2}$ and Jensen's inequality, we have
\[\begin{split}
&\Xint-_{B_r} G\left(\frac{|f(x)-(f)_{B_r}|}{r^s}\right)^{\theta}\,dx \\
&\le c \Xint-_{B_r} G\left(r^{-s}\int_{B_r} \left[\int_{B_r}\frac{ |f(y)-f(z)|}{|y-z|^{n+s/2}}\,dz\right] \frac{dy}{|x-y|^{n-s/2}}\right)^{\theta}\,dx\\
&=c \Xint-_{B_r} G\left(r^{-s}\int_{B_r} \left[\int_{B_r}\frac{ |f(y)-f(z)|}{|y-z|^{s}}\,\frac{dz}{|y-z|^{n-s/2}}\right]\, \frac{dy}{|x-y|^{n-s/2}}\right)^{\theta}\,dx\\
&\le c \Xint-_{B_r} \left[r^{-s}\int_{B_r} \int_{B_r}G\left(\frac{ |f(y)-f(z)|}{|y-z|^{s}}\right)\,\frac{dz}{|y-z|^{n-s/2}}\, \frac{dy}{|x-y|^{n-s/2}}\right]^{\theta}\,dx.
\end{split}\]
Denote
\[
L:= \int_{B_r} \int_{B_r}G\left(\frac{ |f(y)-f(z)|}{|y-z|^{s}}\right)\,\frac{dz\, dy}{|y-z|^{n}}.
\]
Then recall the fact that $|y-z|\le 2r$ and use Jensen's inequality and Fubini's theorem, to discover
\[\begin{split}
&\Xint-_{B_r} G\left(\frac{|f(x)-(f)_{B_r}|}{r^s}\right)^{\theta}\,dx \\
&\le c L^\theta \Xint-_{B_r} \left[L^{-1}\int_{B_r} \frac{r^{-s/2}}{|x-y|^{n-s/2}} \left(\int_{B_r}G\left(\frac{ |f(y)-f(z)|}{|y-z|^{s}}\right)\,\frac{dz}{|y-z|^{n}}\right)\,dy\, \right]^{\theta}\,dx\\
&\le c L^{\theta-1} \Xint-_{B_r} \int_{B_r} \left(\frac{r^{-s/2}}{|x-y|^{n-s/2}}\right)^{\theta} \left(\int_{B_r}G\left(\frac{ |f(y)-f(z)|}{|y-z|^{s}}\right)\,\frac{dz}{|y-z|^{n}}\right)\,dy \,dx\\
&= c L^{\theta-1} \Xint-_{B_r} \int_{B_r}\left[ \int_{B_r}\left(\frac{r^{-s/2}}{|x-y|^{n-s/2}}\right)^{\theta} \,dx \right]G\left(\frac{ |f(y)-f(z)|}{|y-z|^{s}}\right)\,\frac{dz\, dy}{|y-z|^{n}}.
\end{split}\]
We now choose $\theta=\theta(n,s)$ such that
\[
1< \theta< \frac{n}{n-s/2}.
\]
From this choice and \eqref{equivalence} with $s$ replaced by $s\theta/2-n(\theta-1)$, we discover
\[
\frac{1}{c}  \le r^{n(\theta-1)-s\theta/2} \int_{B_r} \frac{1}{|x-y|^{(n-s/2)\theta}}\, dx  \le c \quad \mbox{for every }\ x\in B_r.
\]
Consequently,
\[\begin{split}
\Xint-_{B_r} G\left(\frac{|f(x)-(f)_{B_r}|}{r^s}\right)^{\theta}\,dx &\le c (|B_r|^{-1}L)^{\theta-1} \Xint-_{B_r} \int_{B_r} G\left(\frac{ |f(y)-f(z)|}{|y-z|^{s}}\right)\,\frac{dz\, dy}{|y-z|^{n}}\\
&\le c \left( \Xint-_{B_r} \int_{B_r} G\left(\frac{ |f(y)-f(z)|}{|y-z|^{s}}\right)\,\frac{dz\, dy}{|y-z|^{n}}\right)^{\theta}.
\end{split}\]
This finishes the proof.
\end{proof}

\begin{remark}
In Lemma \ref{lem.sobopoin}, we selected $\theta>1$ such that
$\theta \in (1,\frac{n}{n-s/2})$. This selection is not optimal and
it is possible to consider a lager value $\theta$. However the
condition $\theta>1$ is enough in the proof of Theorem
\ref{thm.bounded} below.
\end{remark}

The following technical lemma will be used in the De Giorgi iteration.
\begin{lemma}(\cite[Lemma 7.1]{G})
\label{tech}
Let $\beta >0$ and ${A_{i}}$ be a sequence of real positive numbers such that
\begin{align*}
A_{i+1} \le CB^{i}A_{i}^{1+\beta}
\end{align*}
with $C>0$ and $B>1$.
If $A_{0} \le C^{-\frac{1}{\beta}}B^{{-\frac{1}{\beta^{2}}}}$, then we have
\begin{align*}
A_{i} \le B^{-\frac{i}{\beta}}A_{0} \quad \mbox{hence, in particular, } \ \lim_{i \rightarrow \infty} A_{i}=0.
\end{align*}
\end{lemma}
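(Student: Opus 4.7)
The plan is a straightforward induction on $i$, with the induction hypothesis being the target inequality $A_i \le B^{-i/\beta} A_0$ itself. The base case $i = 0$ holds trivially since $B^0 = 1$. For the inductive step, I would substitute the hypothesis into the recurrence $A_{i+1} \le C B^i A_i^{1+\beta}$ and collect the powers of $B$: the factor $B^i \cdot B^{-i(1+\beta)/\beta}$ collapses to $B^{-i/\beta}$, yielding $A_{i+1} \le C B^{-i/\beta} A_0^{1+\beta}$.

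The next step is to peel off one copy of $A_0$ and one extra factor of $B^{-1/\beta}$ to produce $B^{-(i+1)/\beta} A_0$, leaving the residual multiplicative factor $B^{1/\beta} C A_0^\beta$. The hypothesis $A_0 \le C^{-1/\beta} B^{-1/\beta^2}$ is calibrated exactly so that $C A_0^\beta \le B^{-1/\beta}$, which makes this residual factor at most $1$ and closes the induction. The conclusion that $A_i \to 0$ follows immediately from $B > 1$. There is no genuine obstacle here; the only subtle point is checking that the exponents align precisely, which is guaranteed by the form of the hypothesis on $A_0$.
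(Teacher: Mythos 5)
Your proof is correct, and it is precisely the standard inductive argument that appears in Giusti's book \cite[Lemma 7.1]{G}, which the paper cites without reproving the lemma. The exponent bookkeeping you describe closes the induction exactly, with the hypothesis on $A_0$ calibrated so that $C A_0^\beta \le B^{-1/\beta}$.
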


Now, we are ready to prove the local boundedness of weak solutions to \eqref{pde}.

\begin{theorem}
\label{thm.bounded}
Let $u \in \mathbb{W}^{s,G}(\Omega) \cap L^{g}_{s}(\mr^{n})$ be a weak subsolution to \eqref{pde} and
$B_{r} \Subset \Omega$. Then we have
\begin{align}\label{lb}
\underset{B_{r/2}}{\mathrm{sup}}\, u_{+} \le c_{b} r^{s}G^{-1}\left( \Xint-_{B_r}G\left( \frac{u_{+}}{r^s} \right)dx \right)
+ r^{s}g^{-1}(r^s\mbox{Tail}(u_{+};x_0,r/2)),
\end{align}
where $c_{b}= c_{b}(n,s,p,q,\lambda,\Lambda)>0$.
Moreover, if $u$ is a weak solution to \eqref{pde}, then $u\in L^\infty_{\mathrm{loc}}(\Omega)$ and we have the estimate \eqref{lb1}.
\end{theorem}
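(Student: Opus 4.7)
The plan is to prove \eqref{lb} by a De Giorgi iteration in the Orlicz setting, built on Proposition \ref{lem.caccio}, Lemma \ref{lem.sobopoin} and Lemma \ref{tech}. Once \eqref{lb} is established for $u_+$ (with $u$ a subsolution), the two-sided estimate \eqref{lb1} for weak solutions follows by applying the same bound to $-u$ via Remark \ref{rmk.caccio} and summing. For the iteration, set shrinking balls $B_j := B_{r_j}(x_0)$ with $r_j := (1 + 2^{-j})r/2$, smooth cut-offs $\phi_j$ equal to $1$ on $B_{j+1}$, supported in $B_{(r_j + r_{j+1})/2}$ and satisfying $|\phi_j(x) - \phi_j(y)| \le c\, 2^j |x-y|/r$, and increasing levels $k_j := K(1 - 2^{-j})$ for a parameter $K>0$ to be fixed at the end. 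Put $w_j := (u - k_j)_+$ and
\[
A_j := \Xint-_{B_j} G\!\left(\tfrac{w_j}{r^s}\right) dx.
\]

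I apply Proposition \ref{lem.caccio} to $w_{j+1}$ on $B_j$ with cut-off $\phi_j$. By \eqref{Gineq} and the gradient bound on $\phi_j$, the cross-term on the right of \eqref{caccio} is dominated by $c\, 2^{jq} |B_j| A_j$. For the nonlocal tail term, the elementary comparison $|x-y| \simeq |x-x_0|$ (up to $j$-dependent constants) for $y \in \mathrm{supp}\,\phi_j$ and $x \in \R^n \setminus B_j$, together with $w_{j+1} \le u_+$, bounds it by $c\, 2^{j\gamma}\,\mathrm{Tail}(u_+;x_0,r/2)$ for some $\gamma = \gamma(n,s,q)$. I then invoke Lemma \ref{lem.sobopoin} on the localization $w_{j+1}\phi_j$ (extended by zero outside $B_j$), whose Gagliardo seminorm is controlled by the right-hand side of \eqref{caccio} after a triangle-type split in $|(\phi_j w_{j+1})(x)-(\phi_j w_{j+1})(y)|$ and an application of $\Delta_2$. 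This produces
\[
\left(\Xint-_{B_{j+1}} G\!\left(\tfrac{w_{j+1}}{r^s}\right)^\theta dx\right)^{1/\theta} \le c\, C^j \left[A_j + G(K/r^s)^{-1} r^s\, \mathrm{Tail}(u_+;x_0,r/2)\right]
\]
for some $\theta = \theta(n,s) > 1$. Choosing $K$ so that $g(K/r^s) \ge r^s\,\mathrm{Tail}(u_+;x_0,r/2)$ absorbs the tail contribution into the leading term.

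The decisive De Giorgi gain is a Chebyshev-type measure reduction: on $\{u > k_{j+1}\} \cap B_{j+1}$ one has $w_j \ge K/2^{j+1}$, so \eqref{Gineq} gives
\[
\frac{|\{u > k_{j+1}\} \cap B_{j+1}|}{|B_{j+1}|} \le \frac{c\, 2^{j(q+1)}}{G(K/r^s)}\, A_j.
\]
Since $w_{j+1}$ is supported in this super-level set, H\"older's inequality between $L^\theta$ and its conjugate yields the recursion
\[
A_{j+1} \le \frac{c\, B^j}{G(K/r^s)^{\beta}}\, A_j^{1 + \beta}, \qquad \beta := (\theta - 1)/\theta > 0.
\]
Lemma \ref{tech} then forces $A_j \to 0$ provided $A_0 \le c\, G(K/r^s)\, B^{-1/\beta^2}$, which, upon inversion using \eqref{Gineq} for $G^{-1}$ together with the earlier $g^{-1}$ condition, amounts to
\[
K \ge c_b\, r^s G^{-1}\!\left(\Xint-_{B_r} G\!\left(\tfrac{u_+}{r^s}\right) dx\right) + r^s g^{-1}\!\left(r^s\, \mathrm{Tail}(u_+;x_0,r/2)\right).
\]
For this choice $(u - K)_+ = 0$ a.e.\ on $B_{r/2}$, which is exactly \eqref{lb}.

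The main obstacle I anticipate is carefully tracking Orlicz scaling through the iteration: since $G$ does not transform like a power under dilation but only satisfies the two-sided bound \eqref{Gineq}, each truncation step, each pull-through of the cut-off, and each tail bound requires a separate use of \eqref{Gineq}, \eqref{Young} or \eqref{Young1}, with the exponents $p$ and $q$ entering asymmetrically and producing the geometric factors $C^j, B^j$. A subsidiary delicate point is reconciling the weighted Gagliardo seminorm on the right of \eqref{caccio} (which carries $\min\{\phi^q(x),\phi^q(y)\}$) with the unweighted form required by Lemma \ref{lem.sobopoin}; passing to the function $\phi_j w_{j+1}$ and exploiting its vanishing outside $B_j$ (so that $(\phi_j w_{j+1})_{B_j}$ can be replaced by $0$ at the cost of a harmless $L^1$ remainder) is the mechanism I plan to use to bridge the two.
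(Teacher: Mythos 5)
Your overall plan is the same De Giorgi iteration the paper uses, and the auxiliary ingredients (Proposition~\ref{lem.caccio}, Lemma~\ref{lem.sobopoin}, Lemma~\ref{tech}) are the right ones. Two minor routing differences are worth noting. First, the paper applies Caccioppoli to the intermediate truncation $\tilde w_j=(u-\tilde k_j)_+$ with $\tilde k_j=(k_j+k_{j+1})/2$ and you skip this; that is a permissible simplification as long as the Chebyshev lower bound is taken against $w_j$ on the set $\{u>k_{j+1}\}$, which you do. Second, to bridge the weighted Caccioppoli left side (carrying $\min\{\phi^q(x),\phi^q(y)\}$) with the unweighted Sobolev--Poincar\'e inequality, you propose localizing with $\phi_j w_{j+1}$ and a triangle/$\Delta_2$ split; the paper instead simply restricts the Caccioppoli double integral to $B_{j+1}\times B_{j+1}$ where $\phi_j\equiv 1$, so the weight is identically one, and then adds back $(\tilde w_j)_{B_{j+1}}$ via Jensen (see \eqref{lb.in2}). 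Both routes work, but the restriction trick is cleaner in the nonlocal setting because it avoids the extra cross terms from the product $\phi_j w_{j+1}$.

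There is, however, a genuine error in your displayed iteration inequality that would prevent the recursion from closing. You wrote
\begin{align*}
\left(\Xint-_{B_{j+1}} G\!\left(\tfrac{w_{j+1}}{r^s}\right)^\theta dx\right)^{1/\theta} \le c\, C^j \left[A_j + G(K/r^s)^{-1} r^s\, \mathrm{Tail}(u_+;x_0,r/2)\right],
\end{align*}
with the tail term not carrying the factor $A_j$. Multiplying by the Chebyshev measure factor $\big(A_j/G(K/r^s)\big)^{1-1/\theta}$ then gives a term of order $A_j^{1-1/\theta}$ rather than $A_j^{1+\beta}$, so Lemma~\ref{tech} does not apply. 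The correct form, which is the paper's \eqref{lb.in1}, is
\begin{align*}
\left(\Xint-_{B_{j+1}} G^\theta\!\left(\tfrac{w_{j+1}}{r^s}\right) dx\right)^{1/\theta} \le c\, C^j\, A_j \left[1 + \frac{r^s}{g(K/r^s)}\, \mathrm{Tail}(u_+;x_0,r/2)\right],
\end{align*}
i.e.\ the tail contribution must also be linear in $A_j$, and the denominator must be $g(K/r^s)$ rather than $G(K/r^s)$. The $A_j$ factor is not automatic: it is produced by converting the $L^1$-type quantity $\Xint-_{B_j} w_{j+1}\phi_j^q\,dx$ (which appears in the nonlocal Caccioppoli term) into the Orlicz average $A_j$ via the structure \eqref{growth}. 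Concretely, on $\{w_{j+1}>0\}$ one has $w_j\ge K2^{-(j+1)}$ and $w_j\ge w_{j+1}$, so
\begin{align*}
G\!\left(\tfrac{w_j}{r^s}\right) \ge \tfrac{1}{q}\,\tfrac{w_j}{r^s}\,g\!\left(\tfrac{w_j}{r^s}\right) \ge c\,2^{-(q-1)j}\,\tfrac{w_{j+1}}{r^s}\,g\!\left(\tfrac{K}{r^s}\right),
\end{align*}
whence $\Xint-_{B_j} w_{j+1}\phi_j^q\,dx \le c\,2^{(q-1)j}\,\frac{r^s}{g(K/r^s)}\,A_j$; this is precisely \eqref{loc.rom2.c1}. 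Your later choice ``$g(K/r^s)\ge r^s\,\mathrm{Tail}(u_+;x_0,r/2)$'' is indeed the right one, but it is only meaningful once the tail term appears multiplied by $A_j$. With that correction, the recursion reads $A_{j+1}\le c\,B^j A_j^{1+\beta}$ after the choice of $K$, and the remainder of your argument goes through as in the paper.
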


\begin{proof}
Suppose that $u$ is a weak subsolution. Fix $B_{r} \Subset \Omega$. For any $j \in \mathbb{N}\cup \{0\}$, write
\begin{align*}
&r_{j}=(1+2^{-j})\frac{r}{2},\ \ \tilde{r}_{j}=\frac{r_{j}+r_{j+1}}{2},\ \ B_{j}=B_{r_{j}}, \ \ \tilde{B}_{j}=B_{\tilde{r}_{j}},\\
&k_{j}=(1-2^{-j})k,\ \ \tilde{k}_{j}=\frac{k_{j}+k_{j+1}}{2},\ \ w_{j}=(u-k_{j})_{+} \ \ \mbox{and}\ \ \tilde{w}_{j}=(u-\tilde{k}_{j})_{+}.
\end{align*}
Note from the above setting that
\begin{align}\label{lb.rel}
B_{j+1} \subset \tilde{B}_{j} \subset B_{j},\quad k_{j} \le \tilde{k}_{j} \le k_{j+1} \quad \mbox{and} \quad w_{j+1} \le \tilde{w}_{j} \le w_{j}.
\end{align}
We take any cut-off functions $\phi_{j} \in C_{0}^{\infty}(\tilde{B}_{j})$ such that $0 \le \phi_{j} \le 1$, $\phi_{j} \equiv 1$ in $B_{j+1}$
and $|D\phi_{j}| \le 2/r_{j+1}$. Putting $\phi_{j}$ into the Caccioppoli inequality \eqref{caccio} with $w_+=\tilde w_j$ (see Remark~\ref{rmk.caccio}) and dividing the inequality by $|B_{j+1}|$, we get
\begin{align}\label{lb.split}
\begin{split}
&\Xint-_{B_{j+1}}\int_{B_{j+1}}G\left(\frac{|\tilde{w}_{j}(x)-\tilde{w}_{j}(y)|}{|x-y|^s}\right)\frac{dxdy}{|x-y|^{n}}\\
&\le c\Xint-_{B_{j}}\int_{B_{j}}G\left(\frac{|\phi_{j}(x)-\phi_{j}(y)|}{|x-y|^s} \max \{ \tilde{w}_{j}(x), \tilde{w}_{j}(y)\} \right)\frac{dxdy}{|x-y|^{n}}\\
&\quad +c\Xint-_{B_{j}}\tilde{w}_{j}(x)\phi_{j}^{q}(x)dx \left( \sup_{y \,\in\, \mathrm{supp}\,\phi_{j}}\int_{\mr^{n} \setminus B_{j}}g\left(\frac{\tilde{w}_{j}(x)}{|x-y|^{s}}\right)\frac{dx}{|x-y|^{n+s}} \right)\\
&=:\rom{1}+\rom{2}.
\end{split}
\end{align}
We first look at the first term $\rom{1}$ in the right-hand side of the above inequality.
Since $|\phi_{j}(x)-\phi_{j}(y)|\le |D\phi_{j}| |x-y|  \le c2^{j}|x-y|/r$, we find
\begin{align}
\label{lb.rom1}
\begin{split}
\rom{1} &\le c\Xint-_{B_{j}}\int_{B_{j}}G\left(2^{j}r^{-1}|x-y|^{1-s} \max \{ \tilde{w}_{j}(x), \tilde{w}_{j}(y) \} \right)\frac{dxdy}{|x-y|^{n}}\\
&\le c2^{qj}\Xint-_{B_{j}}\int_{B_{j}}G\left( \frac{\max \{ \tilde{w}_{j}(x), \tilde{w}_{j}(y)\}}{r^s} \right) \left( \frac{|x-y|}{r} \right)^{(1-s)p} \frac{dxdy}{|x-y|^{n}}\\
&\le c2^{qj}r^{-(1-s)p}\Xint-_{B_{j}}G\left( \frac{\tilde{w}_{j}(x)}{r^s} \right) \left( \int_{B_{j}}\frac{dy}{|x-y|^{n-(1-s)p}} \right) dx\\
&\le c2^{qj}\Xint-_{B_{j}}G\left( \frac{w_{j}(x)}{r^s} \right)dx.
\end{split}
\end{align}
To estimate $\rom{2}$, we write
\begin{align*}
\rom{2}_{1} = \Xint-_{B_{j}}\tilde{w}_{j}(x)\phi_{j}^{q}(x)dx
 \quad\text{and}\quad
\rom{2}_{2} = \sup_{y \, \in\, \mathrm{supp}\,\phi_{j}}\int_{\mr^{n} \setminus B_{j}}g\left(\frac{\tilde{w}_{j}(x)}
{|x-y|^{s}}\right)\frac{dx}{|x-y|^{n+s}}.
\end{align*}
Since $g$ is increasing and $w_{j} \ge \tilde{k}_{j}-k_{j}$ in $\{ u_{j} \ge \tilde{k}_{j} \}$, we have
\begin{align*}
G\left( \frac{w_{j}}{r^s} \right) \ge \frac{1}{q}\frac{w_{j}}{r^s}g\left( \frac{w_{j}}{r^s} \right) \ge \frac{1}{q}\frac{\tilde{w}_{j}}{r^s}g\left( \frac{\tilde{k}_{j}-k_j}{r^s} \right) \ge c2^{-(q-1)j}\frac{\tilde{w}_{j}}{r^s}g\left( \frac{k}{r^s} \right).
\end{align*}
Thus
\begin{align}\label{loc.rom2.c1}
\rom{2}_{1} \le c2^{(q-1)j}\frac{r^s}{g\left( k/{r^s} \right)}\Xint-_{B_{j}}G\left( \frac{w_{j}}{r^s} \right)dx.
\end{align}
In order to estimate $\rom{2}_{2}$, we notice that for $x \in \mr^{n} \setminus B_{j}$ and $y \in \tilde{B}_{j}$,
\begin{align*}
\frac{|x-x_0|}{|x-y|} \le \frac{|x-y|+|y-x_0|}{|x-y|} \le 1+\frac{\tilde{r}_{j}}{r_{j}-\tilde{r}_{j}} \le 2^{j+4}.
\end{align*}
This and \eqref{lb.rel} imply
\begin{align}\label{loc.rom2.c2}\begin{split}
\rom{2}_2&\le \sup_{y\in \tilde B_j}\int_{\mr^{n} \setminus B_{r/2}}g\left(\frac{w_0}{|x-y|^{s}}\right)\frac{dx}{|x-y|^{n+s}}\\
&\le c2^{(n+sq)j}\int_{\mr^{n} \setminus B_{r/2}}g\left(\frac{u_+}{|x-x_0|^{s}}\right)\frac{dx}{|x-x_0|^{n+s}}\\
&=c2^{(n+sq)j}\mbox{Tail}(u_{+};x_0,r/2).
\end{split}\end{align}
In light of \eqref{loc.rom2.c1} and \eqref{loc.rom2.c2}, we thus deduce
\begin{align}\label{lb.rom2}
\rom{2} \le c2^{(n+sq+q)j}\frac{r^s}{g\left( k/{r^s} \right)}\left( \Xint-_{B_{j}}G\left( \frac{w_{j}}{r^s} \right)dx \right)\mbox{Tail}(u_{+};x_0,r/2).
\end{align}
Combining \eqref{lb.split}, \eqref{lb.rom1}, and \eqref{lb.rom2}, and applying the Sobolev-Poincar\'{e} inequality \eqref{sobopoin} to the left-hand side of \eqref{lb.split}, we have
\begin{align}\label{lb.in1}
\begin{split}
&\left( \Xint-_{B_{j+1}}G^{\theta}\left(\frac{|\tilde{w}_{j}-(\tilde{w})_{B_{j+1}}|}{r_{j+1}^{s}}\right)dx \right)^{\frac{1}{\theta}}\\
&\le c2^{(n+sq+q)j} \left[ \Xint-_{B_{j}}G\left( \frac{w_{j}}{r^s} \right)dx + \frac{r^s}{g\left( k/{r^s} \right)}
\left( \Xint-_{B_{j}}G\left( \frac{w_{j}}{r^s} \right)dx \right)\mbox{Tail}(u_{+};x_0,r/2) \right]
\end{split}
\end{align}
for some $\theta=\theta(n,s) > 1$. On the other hand, recalling the definition of $r_{j+1}$ and using Jensen's inequality and \eqref{lb.rel}, we discover
\begin{align}
\label{lb.in2}
\begin{split}
\left( \Xint-_{B_{j+1}}G^{\theta}\left( \frac{\tilde{w}_{j}}{r^{s}} \right)dx \right)^{\frac{1}{\theta}}
&\le c\left( \Xint-_{B_{j+1}}G^{\theta}\left( \frac{|\tilde{w}_{j} - (\tilde{w}_{j})_{B_{j+1}}|}{r^{s}}\right)dx \right)^{\frac{1}{\theta}} + cG\left( \frac{(\tilde{w}_{j})_{B_{j+1}}}{r^{s}} \right)\\
&\le c\left( \Xint-_{B_{j+1}}G^{\theta}\left( \frac{|\tilde{w}_{j} - (\tilde{w}_{j})_{B_{j+1}}|}{r_{j+1}^{s}}\right)dx \right)^{\frac{1}{\theta}} + c
\Xint-_{B_{j}}G\left( \frac{w_{j}}{r^{s}} \right)dx.
\end{split}
\end{align}

Let us estimate the left-hand side of \eqref{lb.in2}. Notice that the relations in \eqref{lb.rel} yield
\begin{align*}
&G^{\theta}\left( \frac{\tilde{w}_{j}}{r^{s}} \right)  \ge G^{\theta-1}\left( \frac{\tilde{w}_{j}}{r^{s}} \right)G\left( \frac{w_{j+1}}{r^{s}} \right) \ge G^{\theta-1}\left( \frac{k_{j+1}-\tilde{k}_{j}}{r^{s}} \right)G\left( \frac{w_{j+1}}{r^{s}} \right).
\end{align*}
Therefore it follows that
\begin{align}\label{lb.in3}
\begin{split}
G^{\frac{\theta-1}{\theta}}\left( \frac{k}{r^{s}} \right)\left( \Xint-_{B_{j+1}}G\left( \frac{w_{j+1}}{r^{s}} \right)dx \right)^{\frac{1}{\theta}} &\le c2^{qj}G^{\frac{\theta-1}{\theta}}\left( \frac{k_{j+1}-\tilde{k}_{j}}{r^{s}} \right)\left( \Xint-_{B_{j+1}}G\left( \frac{w_{j+1}}{r^{s}} \right)dx \right)^{\frac{1}{\theta}}\\
&\le c2^{qj}\left( \Xint-_{B_{j+1}}G^{\theta}\left( \frac{\tilde{w}_{j}}{r^{s}} \right)dx \right)^{\frac{1}{\theta}}.
\end{split}
\end{align}
Taking into account \eqref{lb.in1}, \eqref{lb.in2} and \eqref{lb.in3}, we deduce that
\begin{align}\label{lb.it1}
\begin{split}
&G^{\frac{\theta-1}{\theta}}\left( \frac{k}{r^{s}} \right)\left( \Xint-_{B_{j+1}}G\left( \frac{w_{j+1}}{r^{s}} \right)dx \right)^{\frac{1}{\theta}}\\
&\le c2^{(n+sq+2q)j} \left[ \Xint-_{B_{j}}G\left( \frac{w_{j}}{r^s} \right)dx + \frac{r^s}{g\left( k/{r^s} \right)}\left( \Xint-_{B_{j}}G\left( \frac{w_{j}}{r^s} \right)dx \right)\mbox{Tail}(u_{+};x_0,r/2) \right].
\end{split}
\end{align}
Denote
\begin{align*}
a_j:=\frac{1}{G(k/r^s)}\Xint-_{B_j}G\left( \frac{w_j}{r^s} \right)dx.
\end{align*}
Then \eqref{lb.it1} is identical to
\begin{align*}
a_{j+1} \le c_{2}2^{(n+sq+2q)\theta j}\left[ 1 + \frac{r^s}{g\left( k/{r^s} \right)}\mbox{Tail}(u_{+};x_0,r/2) \right]^{\theta}a_{j}^{\theta}
\end{align*}
for some $c_{2}>0$ depending on $n,s,p,q,\lambda$ and $\Lambda$. At this stage, choose
\begin{align*}
k = r^{s}G^{-1}\left( c_{3}\Xint-_{B_r}G\left( \frac{u_{+}}{r^s} \right)dx \right) + r^{s}g^{-1}(r^s\mbox{Tail}(u_{+};x_0,r/2)),
\end{align*}
where $c_{3} = (c_{2}2^{\theta})^{\frac{1}{\theta - 1}}2^{\frac{(n+sq+2q)\theta}{(\theta - 1)^{2}}}$.
Then we see that
\begin{align*}
a_{j+1} \le (c_{2}2^{\theta})2^{(n+sq+2q)\theta j}a_{j}^{\theta} \quad \mbox{and} \quad a_{0} \le c_3^{-1}= (c_{2}2^{\theta})^{-\frac{1}{\theta - 1}}2^{-\frac{(n+sq+2q)\theta}{(\theta - 1)^{2}}}.
\end{align*}
Set $c_{b} = \max \left\{ c_{3}^{1/p},c_{3}^{1/q} \right\}$. Since lemma \ref{tech} implies $a_{j} \rightarrow 0$ as $j \rightarrow \infty$, we discover
\begin{align*}
\underset{B_{r/2}}{\mathrm{sup}}\, u_{+} \le k \le c_{b}r^{s}G^{-1}\left( \Xint-_{B_r}G\left( \frac{u_{+}}{r^s} \right)dx \right) + r^{s}g^{-1}(r^s\mbox{Tail}(u_{+};x_0,r/2)),
\end{align*}
and this is \eqref{lb}.\\
If $u$ is a weak solution, then $-u$ is a weak subsolution. Then we have the estimate \eqref{lb} with $u_{+}$ replaced by 
$(-u)_{+}=u_{-}$. This completes the proof.
\end{proof}

\subsection{H\"older continuity}
We complete the proof of Theorem~\ref{mainthm} by obtaining \eqref{holder}.
Let $u \in \mathbb{W}^{s,G}(\Omega) \cap L^{g}_{s}(\mr^{n})$ be a weak solution to \eqref{pde}.
Let $B_{r} \equiv B_{r}(x_0) \Subset \Omega$. For  $\alpha\in(0,1)$, $\sigma\in(0,1)$ and $i=0,1,2,\dots$, we write
\begin{align}
\label{ri}
r_i:=\sigma^{i}\frac{r}{2} \quad \text{and} \quad B_i=B_{r_i}(x_0)
\end{align}
and define
\begin{align}
\label{ori}
\omega(r_i):=\left( \frac{r_i}{r_0} \right)^{\alpha}\omega(r_0)=\sigma^{\alpha i}\omega(r_0)
\end{align}
with
\begin{align}\label{omega0}
\omega(r_0):=2\left( c_br^{s}G^{-1}\left( \Xint-_{B_r}G\left( \frac{|u|}{r^s} \right)dx \right) +r^{s}g^{-1}(r^s\mbox{Tail}(u;x_0,r/2))  \right),
\end{align}
where $c_{b}$ is as in \eqref{lb1}.

For the proof of \eqref{holder}, it is enough to show the following
oscillation  decay estimate.
\begin{lemma}
\label{lem.osc}
Under the above setting, there exist small  $\alpha,\sigma\in (0,1)$ depending on $n,s,p,q,\lambda$ and $\Lambda$ such that
for every $i \geq 0$,
\begin{align}
\label{osc}
\underset{B_{i}}{\mathrm{osc}} \, u := \sup_{B_{i}}u - \inf_{B_{i}}u \le \omega(r_{i}).
\end{align}
\end{lemma}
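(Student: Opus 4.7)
The plan is to prove Lemma~\ref{lem.osc} by induction on $i$. The base case $i=0$ is immediate: since $B_0 = B_{r/2}(x_0)$, the local boundedness estimate \eqref{lb1} applied to both $u$ and $-u$, together with the definition \eqref{omega0}, gives $\|u\|_{L^\infty(B_0)} \le \omega(r_0)/2$, and therefore $\mathrm{osc}_{B_0} u \le \omega(r_0)$.

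For the inductive step, assume \eqref{osc} holds for all $j \le i$, set $M_j := \sup_{B_j} u$ and $m_j := \inf_{B_j} u$, and aim for the quantitative improvement $\mathrm{osc}_{B_{i+1}} u \le (1-\theta_0)\omega(r_i)$ with $\theta_0 \in (0,1)$ depending only on data; then $\sigma$ and $\alpha$ can be fixed so that $1-\theta_0 \le \sigma^\alpha$. Since $\mathcal{L}(-u) = -\mathcal{L}u$, I may replace $u$ by $-u$ and assume
$$\bigl|\{x \in B_{i+1} : u(x) \ge (M_i+m_i)/2\}\bigr| \ge \tfrac{1}{2}|B_{i+1}|.$$
The function $\tilde u := u - m_i$ is then a nonnegative weak supersolution on $B_i$. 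I apply the logarithmic estimate \eqref{lo} to $\tilde u$ on $B_{i+1} \subset \tfrac12 B_i$ (enforcing $\sigma < 1/2$) with level $a = \omega(r_i)/4$, shift $d = \omega(r_i)/2^{j_0}$ for a large integer $j_0$, and truncation $\log b$ with $b \sim 2^{j_0}$. A Chebyshev argument then yields
$$\bigl|\{x \in B_{i+1} : \tilde u(x) \le 2^{-j_0}\omega(r_i)\}\bigr| \le \frac{C|B_{i+1}|}{j_0}\Bigl(1 + \frac{r_{i+1}^s\,\mathrm{Tail}(\tilde u_-; x_0, r_i)}{g(d/r_{i+1}^s)}\Bigr).$$
The decisive step is controlling the tail factor. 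Splitting $\mathbb{R}^n \setminus B_i = (\mathbb{R}^n \setminus B_0) \cup \bigcup_{j=0}^{i-1}(B_j\setminus B_{j+1})$, the inductive bound $m_j \ge m_i - \omega(r_j)$ yields $\tilde u_- \le \omega(r_j)$ on $B_j\setminus B_{j+1}$, and the growth conditions \eqref{growth} together with $r_j = \sigma^j r_0$ and $\omega(r_j) = \sigma^{\alpha j}\omega(r_0)$ turn the resulting dyadic sum into a geometric series bounded by a constant provided $\alpha < s$ and $\sigma$ is small. The far-field contribution from $\mathbb{R}^n \setminus B_0$ is controlled by the $\mathrm{Tail}(u;x_0,r/2)$ term already embedded in $\omega(r_0)$.

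Once this initial small-measure bound is in place, I run a De Giorgi iteration using the Caccioppoli estimate \eqref{caccio} applied to $w_-$ at dyadic levels $k_\ell = m_i + \lambda(1 + 2^{-\ell})/2$, with $\lambda := 2^{-j_0-1}\omega(r_i)$, on a shrinking sequence of balls decreasing to $B_{i+1}$. Coupling with the fractional Sobolev--Poincar\'e inequality \eqref{sobopoin} (crucially in its integral form, which is precisely why Lemma~\ref{lem.sobopoin} was established) and reusing the tail bookkeeping above produces a recursion of the form $A_{\ell+1} \le C B^\ell A_\ell^{1+\beta}$ for the normalized super-level set measures $A_\ell$. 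Taking $j_0$ large enough ensures $A_0$ is small enough to trigger Lemma~\ref{tech}, forcing $A_\ell \to 0$, whence $u \ge m_i + \lambda$ on $B_{i+1}$ and consequently $\mathrm{osc}_{B_{i+1}} u \le \omega(r_i) - \lambda = (1 - 2^{-j_0-1})\omega(r_i)$, the required improvement.

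The hard part will be the simultaneous bookkeeping of tails and the non-homogeneity of $G$. Unlike the fractional $p$-Laplacian setting of \cite{DKP2}, one cannot rescale $u$ to $u/\omega(r_i)$ because $G(st) \not\approx G(s)G(t)$; every manipulation must be routed through the two-sided power bounds \eqref{Gineq} and \eqref{G*ineq}, which forces $\alpha$ to be chosen strictly smaller than $s$ with both $\sigma$ and $\alpha$ depending quantitatively on $n,s,p,q,\lambda,\Lambda$. Likewise, the shift $d$ in the logarithmic step and the level $\lambda$ in the De Giorgi iteration must be tuned simultaneously: $d$ must be large enough that $g(d/r_{i+1}^s)$ dominates the tail sum, yet comparable to $\omega(r_i)$ so that $\lambda$ remains a definite fraction of $\omega(r_i)$ and the improvement $\theta_0$ is independent of $i$.
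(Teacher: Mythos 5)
Your blueprint is essentially the paper's proof: induction on $i$, a tail estimate for the shifted/reflected function obtained by decomposing $\mathbb{R}^n\setminus B_i$ into dyadic annuli and invoking the inductive hypothesis together with the geometric decay of $\omega(r_j)/r_j^s$, the logarithmic estimate (Proposition~\ref{lem.log}) to get a small initial super-level density, a De Giorgi iteration fed by the Caccioppoli estimate and the integral Sobolev--Poincar\'e inequality, and a final tuning of $\sigma$ and $\alpha$. The one substantive difference is parametrization: you introduce a free dyadic depth $j_0$ (choosing $d=2^{-j_0}\omega(r_i)$ and improvement $\theta_0\sim 2^{-j_0}$), whereas the paper ties the depletion level directly to $\sigma$ via $\epsilon=\sigma^{(sp-\alpha(p-1))/(q-1)}$, which makes the tail factor $g(\omega/r^s)/g(\epsilon\omega/r^s)\cdot\sigma^{sp-\alpha(p-1)}$ cancel exactly and the initial density bound $c/\log(1/\sigma)\to 0$ automatically as $\sigma\to 0$; both routes work, but yours requires the extra care you already acknowledge in ordering the choices ($j_0$, then $\sigma$, then $\alpha$). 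One small caveat to fix: you state the density condition and run the log estimate on $B_{i+1}$ itself, but the subsequent De Giorgi iteration needs room to shrink onto $B_{i+1}$, so (as in the paper, which works on $2B_{j+1}$ under $\sigma<1/4$) both the density normalization \eqref{density} and the Chebyshev step must be posed on a ball strictly larger than $B_{i+1}$, or else the iteration only yields the lower bound on a strictly smaller ball and the induction does not close.
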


\begin{proof}
First of all, we assume that
\begin{align}\label{sigma1}
\alpha\le \frac{sp}{2(p-1)} \quad \text{and}\quad \sigma<\frac{1}{4}.
\end{align}
We prove this lemma by induction. Obviously, \eqref{osc} holds true for $i=0$ from \eqref{lb1} and the definition of $\omega(r_0)$.
Suppose that for some $j \geq 0$,
\begin{align}\label{ho.hypo}
\underset{B_i}{\mathrm{osc}} \, u \le \omega(r_i) \quad  \mbox{for all }\ i \in \{ 0,1,2, \cdots, j \},
\end{align}
and then we will prove \eqref{osc} for $i=j+1$.
We define $u_{j}$ by
\begin{align*}
u_{j}:=
\begin{cases}\displaystyle
u-\inf_{B_j}u, &\quad \displaystyle \mbox{if } \ |2B_{j+1} \cap \{ u \ge \inf_{B_j}u + \omega (r_j)/2 \}| \ge \tfrac{1}{2}|2B_{j+1}|,\\
\displaystyle\omega(r_{j})-(u-\inf_{B_j}u), &\quad\displaystyle \mbox{if } \ |2B_{j+1} \cap \{ u \le \inf_{B_j}u + \omega (r_j)/2 \}| \ge \tfrac{1}{2}|2B_{j+1}|,\\
\end{cases}
\end{align*}
where $2B_{j+1}:=B_{2r_{j+1}}$.
Then $u_{j} \ge 0$ in $B_{j}$ and
\begin{align}\label{density}
\frac{|2B_{j+1} \cap \{ u_j \ge \omega (r_j)/2 \}|}{|2B_{j+1}|} \ge \frac{1}{2}.
\end{align}
We divide the remaining part of the proof into three steps.

\smallskip

\textit{Step 1.} We first estimate Tail$(u_{j};x_0,r_{j})$. Define $T_{1}$ and $T_{2}$ as follows:
\begin{align}\label{ho.tail.sp}
\begin{split}
\mbox{Tail}(u_j;x_0,r_j)&=\sum_{i=1}^{j} \int_{B_{i-1} \setminus B_i}g\left( \frac{|u_{j}(x)|}{|x-x_0|^s} \right)\frac{dx}{|x-x_0|^{n+s}}\\
&\qquad +\int_{\mr^{n} \setminus B_0} g\left( \frac{|u_{j}(x)|}{|x-x_0|^s} \right)\frac{dx}{|x-x_0|^{n+s}}\\
&=:T_1+T_2.
\end{split}
\end{align}
Before estimating $T_{1}$ and $T_{2}$, observe that the definition of $u_{j}$ and the induction hypothesis \eqref{ho.hypo} imply
\begin{align}\label{ho.tail.u1}
\sup_{B_i}|u_j| \le 2\omega(r_i) \quad \mbox{for all } \ i \le j.
\end{align}
Moreover, the local boundedness of $u$ implies
\begin{align}\label{ho.tail.u2}
|u_{j}| \le |u|+\omega(r_{j})+\sup_{B_{j}}|u| \le |u|+2\omega(r_{0}).
\end{align}
We now estimate $T_{1}$. Recall \eqref{ho.tail.u1} to find
\begin{align}\label{ho.t1}
\begin{split}
T_1 &\le \sum_{i=1}^{j} \int_{B_{i-1} \setminus B_i}g\left( \frac{\sup_{B_{i-1}}|u_j|}{|x-x_0|^s} \right)\frac{dx}{|x-x_0|^{n+s}}\\
&\le c\sum_{i=1}^{j} \int_{B_{i-1} \setminus B_i}g\left( \frac{\omega(r_{i-1})}{r_{i}^s} \right)\left( \frac{r_{i}^s}{|x-x_0|^s} \right)^{p-1}\frac{dx}{|x-x_0|^{n+s}}\\
&= c\sum_{i=1}^{j} r_{i}^{s(p-1)}g\left( \frac{\omega(r_{i-1})}{r_{i}^s} \right)\int_{B_{i-1} \setminus B_i}\frac{dx}{|x-x_0|^{n+sp}} \le c\sum_{i=1}^{j} \frac{1}{r_{i}^{s}}g\left( \frac{\omega(r_{i-1})}{r_{i}^{s}} \right).
\end{split}
\end{align}

In order to estimate $T_{2}$, we write $h(t):=\tilde
g(t^{1/(q-1)}))$ with $\tilde g(t):=G(t)/t$. Then by \eqref{growth},
we have $p\tilde g(t)\le g(t) \le q \tilde g(t)$, $h(t)$ is increasing
and $h(t)/t$ is decreasing for $t$. Then there exists a concave
function $\psi$ such that $\frac{1}{2}\psi(t)\leq h(t)\leq \psi(t)$,
see \cite[Lemma 2.2]{Ok18}. Considering \eqref{ho.tail.u2} and using
Jensen's inequality with respect to the measure
$\frac{dx}{|x-x_{0}|^{n+s}}$, we find
\begin{align*}
\begin{split}
T_2 &\le c\int_{\mr^{n} \setminus B_{0}} \tilde g\left( \frac{\omega(r_{0})}{|x-x_0|^s} \right)\frac{dx}{|x-x_0|^{n+s}}
+ c\int_{\mr^{n} \setminus B_0} g\left( \frac{|u(x)|}{|x-x_0|^s} \right)\frac{dx}{|x-x_0|^{n+s}}\\
&= c\int_{\mr^{n} \setminus B_{0}} h \left( \left(\frac{\omega(r_{0})}{|x-x_0|^s}\right)^{q-1} \right)\frac{dx}{|x-x_0|^{n+s}}  + c\mbox{Tail}(u;x_{0},r_0)\\
&\le \frac{c}{r_0^{s}} h \left(r_0^{s}\int_{\mr^{n} \setminus B_{0}} \left( \frac{\omega(r_{0})}{|x-x_0|^s} \right)^{q-1}\frac{dx}{|x-x_0|^{n+s}} \right) + c\mbox{Tail}(u;x_{0},r_0)\\
&\le \frac{c}{r_0^{s}} h \left( \left( \frac{\omega(r_{0})}{r_{0}^{s}} \right)^{q-1} \right) + c\mbox{Tail}(u;x_{0},r_0)\\
&\le \frac{c}{r_{0}^{s}} g \left( \frac{\omega(r_{0})}{r_{0}^{s}} \right) + c\mbox{Tail}(u;x_{0},r_0).
\end{split}
\end{align*}
We recall \eqref{omega0} to discover
\begin{align}\label{ho.t2}
T_{2} \le \frac{c}{r_{0}^{s}}g \left( \frac{\omega(r_{0})}{r_{0}^{s}} \right) \le \frac{c}{r_{1}^{s}}g \left( \frac{\omega(r_{0})}{r_{1}^{s}} \right).
\end{align}
Then we combine \eqref{ho.tail.sp}, \eqref{ho.t1}, and \eqref{ho.t2}, and recall \eqref{ri} and \eqref{ori} to have
\begin{align}
\label{ho.tail}
\begin{split}
\mbox{Tail}(u_j;x_0,r_j) &\le \sum_{i=1}^{j} \frac{c}{r_{i}^{s}}g\left( \frac{\omega(r_{i-1})}{r_{i}^{s}} \right) = \sum_{i=1}^{j} \frac{c}{r_{i}^{s}}g\left( \frac{\omega(r_{j})}{r_{j+1}^{s}}\sigma^{(s-\alpha)(j-i+1)} \right)\\
&\le \frac{c}{r_{j+1}^s}g\left( \frac{\omega(r_{j})}{r_{j+1}^{s}} \right)\sum_{i=1}^{j}\sigma^{(sp-\alpha(p-1))(j-i+1)}\\
&\le \frac{c}{r_{j+1}^s}g\left( \frac{\omega(r_{j})}{r_{j+1}^{s}} \right)\frac{\sigma^{sp-\alpha(p-1)}}{1-\sigma^{sp-\alpha(p-1)}}\\
&\le \frac{c}{r_{j+1}^s}g\left( \frac{\omega(r_{j})}{r_{j+1}^{s}} \right)\sigma^{sp-\alpha(p-1)},
\end{split}
\end{align}
by taking $\sigma>0$ sufficiently small so that
\begin{align}\label{sigma2}
\sigma^{sp-\alpha(p-1)}\le  \sigma^{\frac{sp}{2}} \le  \frac{1}{2}.
\end{align}

\smallskip
\textit{Step 2.} In this step, we look at
\begin{align}\label{epsilon}
\frac{|2B_{j+1}\cap\{ u_{j} \le 2\ep\omega(r_j) \}|}{|2B_{j+1}|}, \quad \mbox{where }\ \ep:=\sigma^{\frac{sp-\alpha(p-1)}{q-1}}\le \sigma^{\frac{sp}{2(q-1)}} <1.
\end{align}
For $k>0$ to be determined later, we write
\begin{align*}
v:=\min \left\{\left[ \log \left( \frac{\omega(r_j)/2+\ep\omega(r_j)}{u_j+\ep\omega(r_j)} \right) \right]_{+}, k \right\}.
\end{align*}
Then applying Lemma~\ref{lo} with $u=u_j$, $r=2r_{j+1}$, $R=r_j$, $a \equiv \omega(r_{j})$, $b \equiv \exp{(k)}$
and $d=\ep\omega(r_j)$ and using \eqref{ho.tail}, we find
\begin{align}
\label{vestimate}
\Xint-_{2B_{j+1}}|v-(v)_{2B_{j+1}}|\,dx \le c\left[ 1 + \frac{g(\omega(r_{j})/r_{j+1}^{s})}{g(\ep\omega(r_j)/r_{j+1}^{s})}\sigma^{sp-\alpha(p-1)} \right] \le c.
\end{align}
On the other hand, from \eqref{density} we see
\begin{align*}
k
&= \frac{1}{|2B_{j+1}\cap\{ u_{j} \ge \omega(r_j)/2 \}|}\int_{2B_{j+1}\cap\{ v=0 \}}k \, dx\le \frac{2}{|2B_{j+1}|}\int_{2B_{j+1}}(k-v)\, dx = 2[k-(v)_{2B_{j+1}}].
\end{align*}
Integrating the above inequality over $2B_{j+1}\cap\{ v=k \}$ and using \eqref{vestimate},
we get
\begin{align*}
\frac{|2B_{j+1} \cap \{ v=k \}|}{|2B_{j+1}|}k &\le \frac{2}{|2B_{j+1}|}\int_{2B_{j+1} \cap \{ v=k \}}(k-(v)_{2B_{j+1}}) \, dx\\
& \le  \frac{2}{|2B_{j+1}|} \int_{2B_{j+1}}|v-(v)_{2B_{j+1}}|\,dx \le c.
\end{align*}
Here we assume $\sigma>0$ is sufficiently small so that
\begin{align}\label{sigma3}
\sqrt{\epsilon}= \sigma^{\frac{sp-\alpha(p-1)}{2(q-1)}}\le  \sigma^{\frac{sp}{4(q-1)}}  \le \frac{1}{6},
\end{align}
and take
\begin{align*}
k = \log{\left( \frac{\omega(r_{j})/2 + \ep \omega(r_{j})}{3\ep \omega(r_{j})} \right)} \ge \log\left(\frac{1}{6\epsilon}\right)\ge \frac{1}{2}\log\left(\frac1 \epsilon\right),
\end{align*}
from which, together with \eqref{epsilon}, we discover
\begin{align*}
\frac{|2B_{j+1}\cap\{ u_{j} \le 2\ep\omega(r_j) \}|}{|2B_{j+1}|} \le \frac{c}{k} \le \frac{c_{4}}{\log{(1/\sigma)}}
\end{align*}
for some $c_{4}>0$ depending on $n,p,q,\lambda$ and $\Lambda$.

\smallskip
\textit{Step 3.} Finally, we prove \eqref{osc} for $i=j+1$. For any $m = 0,1,2,\dots$, we write
\begin{align*}
&\rho_{m}=(1+2^{-m})r_{j+1},\ \  \tilde{\rho}_{m}=\frac{r_{m}+r_{m+1}}{2},\ \ B^{m}=B_{\rho_{m}},\ \ \tilde{B}^{m}=B_{\tilde{\rho}_{m}},\\
&k_{m}=(1+2^{-m})\ep\omega(r_{j})\ \ \mbox{and}\ \ w_{m}=(k_{m}-u_{j})_{+}=(u_{j}-k_m)_-.
\end{align*}
Note that $r_{j+1}<\rho_m\le 2r_{j+1}$ and $\ep\omega(r_{j})< k_m \le 2\ep\omega(r_{j})$. Take cut-off functions
$\phi_{m} \in C_{0}^{\infty}(\tilde{B}^{m})$ such that $0 \le \phi_{m} \le 1$, $\phi_{m} \equiv 1$ in $B^{m+1}$ and $|D\phi_{m}|<c/\rho_{m}$. Applying the Caccioppoli inequality \eqref{caccio} to $w_-=w_m$, $\phi=\phi_{m}$ and $B_r=B^m$, we have
\begin{align}\label{ho.ca}
\begin{split}
&\int_{B^{m+1}}\int_{B^{m+1}}G\left(\frac{|w_{m}(x)-w_{m}(y)|}{|x-y|^s}\right)\frac{dxdy}{|x-y|^{n}}\\
&\quad\le c\int_{B^{m}}\int_{B^{m}}G\left(\frac{|\phi_{m}(x)-\phi_{m}(y)|}{|x-y|^s} \max \{w_{m}(x),w_{m}(y)\} \right)\frac{dxdy}{|x-y|^{n}}\\
&\qquad +c\int_{B^{m}}w_{m}(x)\phi_{m}^{q}(x)dx\left( \sup_{y \in \tilde{B}^{m}}\int_{\mr^{n} \setminus {B^{m}}}
g\left(\frac{w_{m}(x)}{|x-y|^{s}}\right)\frac{dx}{|x-y|^{n+s}}\right).
\end{split}
\end{align}
As in the proof of local boundedness, we use the Sobolev-Poincar\'{e} inequality \eqref{sobopoin},  Jensen's inequality and \eqref{ho.ca}, to find
\begin{align}
\label{ho.sp}
\begin{split}
\rom{1} &:= \left( \Xint-_{B^{m+1}}G^{\theta}\left( \frac{w_{m}}{\rho_{m+1}^s} \right)\,dx \right)^{\frac{1}{\theta}}\\
&\le c\left( \Xint-_{B^{m+1}}G^{\theta}\left( \frac{w_{m}-(w_{m})_{B^{m+1}}}{\rho_{m+1}^s} \right)\,dx \right)^{\frac{1}{\theta}} + c\left( \Xint-_{B^{m+1}}G^{\theta}\left( \frac{(w_{m})_{B^{m+1}}}{\rho_{m+1}^s} \right)\,dx \right)^{\frac{1}{\theta}}\\
&\le c\Xint-_{B^{m+1}}\int_{B^{m+1}}G\left(\frac{|w_{m}(x)-w_{m}(y)|}{|x-y|^{s}}\right)\,\frac{dxdy}{|x-y|^{n}} + cG\left( \frac{(w_{m})_{B^{m+1}}}{\rho_{m+1}^s} \right)\\
&\le c\Xint-_{B^{m}}\int_{B^{m}}G\left(\frac{|\phi_{m}(x)-\phi_{m}(y)|}{|x-y|^{s}} \max \{w_{m}(x),w_{m}(y)\} \right)\,\frac{dxdy}{|x-y|^{n}}\\
&\quad +c\Xint-_{B^{m}}w_{m}(x)\phi_{m}^{q}(x)\,dx\left( \sup_{y \in \tilde{B}^{m}}\int_{\mr^{n} \setminus {B^m}}g\left(\frac{w_{m}(x)}{|x-y|^{s}}\right)\,\frac{dx}{|x-y|^{n+s}}\right)\\
&\quad +c\Xint-_{B^{m+1}}G\left( \frac{w_{m}}{\rho_{m+1}^{s}} \right)\,dx\\
&=: \rom{2} + \rom{3} + \rom{4}.
\end{split}
\end{align}
We write
\begin{align*}
A_{m} := \frac{|B^{m}\cap\{ u_{j} \le k_{m} \}|}{|B^{m}|}.
\end{align*} From the definition of $u_{j}$, $k_{m}$ and $A_{m}$, we
estimate $\rom{1}$ as follows:
\begin{align}\label{ho.rom1}
\begin{split}
\rom{1} &\ge \frac{1}{|B^{m+1}|^{\frac{1}{\theta}}} \left( \int_{B^{m+1}\cap\{ u_j \le k_{m+1} \}}G^{\theta}\left(\frac{k_{m}-k_{m+1}}{\rho_{m+1}^s}\right)\, dx \right)^{\frac{1}{\theta}} = A_{m+1}^{\frac{1}{\theta}}G\left(\frac{k_{m}-k_{m+1}}{\rho_{m+1}^s}\right)\\
&\ge c 2^{-qm}A_{m+1}^{\frac{1}{\theta}}G\left(\frac{\ep \omega(r_j)}{r_{j+1}^s}\right).
\end{split}
\end{align}
Since $|\phi_{m}(x) - \phi_{m}(y)|\le c |x-y|/r_{j+1}$, we find
\begin{align}\label{ho.rom2}
\begin{split}
\rom{2}
&\le c2^{qm}r_{j+1}^{-n}\int_{B^{m}}\int_{B^{m}}G\left( \frac{\max \{w_{m}(x),w_{m}(y)\}}{r_{j+1}^s} \right) \left( \frac{|x-y|}{r_{j+1}} \right)^{(1-s)p}\, \frac{dxdy}{|x-y|^{n}}\\
&\le c2^{qm}r_{j+1}^{-n-(1-s)p}\int_{B^{m} \cap \{ u_{j} \le k_m \}}\int_{B^{m}}G\left( \frac{k_m}{r_{j+1}^s} \right) \,\frac{dydx}{|x-y|^{n-(1-s)p}}\\
&\le c2^{qm}r_{j+1}^{-n}G\left( \frac{\ep\omega(r_j)}{r_{j+1}^s} \right)|B^{m} \cap \{ u_{j} \le k_{m} \}|
\le c2^{qm}G\left( \frac{\ep\omega(r_j)}{r_{j+1}^s} \right)A_{m}.
\end{split}
\end{align}
As for $\rom{3}$,
set
\begin{align*}
\rom{3}_{1} = \Xint-_{B^{m}}w_{m}(x)\phi^{q}(x)\, dx
\quad \mbox{and}\quad
\rom{3}_{2} = \sup_{y \in \tilde{B}^{m}}\int_{\mr^{n} \setminus {B^{m}}}g\left(\frac{w_{m}(x)}{|x-y|^{s}}\right)\, \frac{dx}{|x-y|^{n+s}}.
\end{align*}
Then we have
\begin{align*}
\rom{3}_{1} \le |B^{m}|^{-1}\int_{B^{m} \cap \{ u_{j} \le k_{m} \}}k_{m}dx \le cr_{j+1}^{-n}[\ep\omega(r_j)]|B^{m} \cap \{ u_{j} \le k_{m} \}|.
\end{align*}
Using the fact that
$\frac{|x-x_{0}|}{|x-y|} \le 1 + \frac{|y-x_{0}|}{|x-y|} \le 1 + \frac{\tilde{\rho}_{m}}{\rho_{m} - \tilde{\rho}_{m}} \le c  2^{m}
$
for $x \in \mr^{n} \setminus B^{m}$ and  $y \in \tilde{B}^{m}$,
we have
\begin{align*}
\rom{3}_{2} \le \int_{\mr^{n} \setminus B_{j+1}}g\left(\frac{w_{m}(x)}{|x-x_0|^{s}}2^{sm}\right)2^{(n+s)m}\frac{dx}{|x-x_0|^{n+s}}
\le c2^{(n+sq)m}\mbox{Tail}(w_{m};x_0,r_{j+1}).
\end{align*}
Moreover, since $u_j\ge 0$ in $B_j$, we have $w_m \le k_m \le 2 \ep \omega(r_j)$ in $B_j$ and $w_m\le k_m+|u_j|
\le  2 \ep \omega(r_j)+|u_j|$ in $\R^n\setminus B_j$. Then from \eqref{ho.tail}, we see
\begin{align*}
&\mbox{Tail}(w_{m};x_0,r_{j+1})
\le c\int_{\mr^{n} \setminus B_{j+1}}g\left( \frac{\ep \omega(r_j)}{|x-x_0|^s} \right)\frac{dx}{|x-x_0|^{n+s}} + c\mbox{Tail}(u_{j};x_0,r_{j})\\
&\le c\int_{\mr^{n} \setminus B_{j+1}}g\left( \frac{\ep\omega(r_{j})}{r_{j+1}^{s}} \right) \left(\frac{r_{j+1}}{|x-x_0|}\right)^{(p-1)s} \, \frac{dx}{|x-x_0|^{n+s}} + \frac{c}{r_{j+1}^s}g\left( \frac{\omega(r_{j})}{r_{j+1}^{s}} \right)\sigma^{sp-\alpha(p-1)}\\
&\le \frac{c}{r_{j+1}^s}g\left( \frac{\ep\omega(r_{j})}{r_{j+1}^{s}} \right).
\end{align*}
Therefore we obtain
\begin{align}\label{ho.rom3}
\rom{3} \le c 2^{(n+sq)m} r_{j+1}^{-n}|B^{m} \cap \{ u_{j} \le k_{m} \}|  \frac{\ep\omega(r_j)}{r_{j+1}^s}g\left( \frac{\ep\omega(r_j)}{r_{j+1}^s} \right) \le 2^{(n+sq)m} G\left( \frac{\ep\omega(r_j)}{r_{j+1}^s} \right)A_{m}.
\end{align}
We recall the notation for $\rom{4}$ to find
\begin{align}\label{ho.rom4}
\rom{4} \le c G\left( \frac{\ep\omega(r_j)}{r_{j+1}^{s}} \right)A_{m}.
\end{align}
We finally combine \eqref{ho.sp},\eqref{ho.rom1},\eqref{ho.rom2},\eqref{ho.rom3}, and \eqref{ho.rom4}, to discover
\begin{align*}
A_{m+1} \le c2^{(n+sq+2q)\theta m}A_{m}^{1+\beta}, \quad \mbox{where }\ \beta=\theta -1.
\end{align*}
Recall that
\begin{align*}
A_0=\frac{|2B_{j+1} \cap \{ u_j \le 2\ep\omega(r_j) \}|}{|2B_{j+1}|}
\le \frac{c_{4}}{\log(1/\sigma)}
\end{align*}
and choose $\sigma>0$ sufficiently large such that
\begin{align}\label{sigma3}
\frac{c_{4}}{\log(1/\sigma)} \le c^{-1/\beta}2^{-[n+sq+2q]\theta/\beta^{2}}.
\end{align}
Here, we notice that the constant $\sigma$ is determined from \eqref{sigma1}, \eqref{sigma2}, and \eqref{sigma3}, hence depends only on $n,s,p,q,\lambda$ and $\Lambda$.
Then we apply Lemma~\ref{tech} to see that $\lim_{m\to
\infty}A_{m}=0$, which implies
\begin{align*}
u_j > \epsilon \omega(r_j)  \quad \text{in }\ B_{j+1},
\end{align*}
and recall our notations for $u_j$, $\omega(r_{j+1})$ and
$\epsilon$, to conclude
\begin{align*}
\underset{B_{j+1}}{\mathrm{osc}}\, u \le (1-\epsilon)\omega(r_j) =
\frac{1-\sigma^{\frac{sp-\alpha(p-1)}{q-1}}}{\sigma^{\alpha}}\omega(r_{j+1})
\le \frac{1-\sigma^{\frac{sp}{q-1}}}{\sigma^{\alpha}}\omega(r_{j+1})
\le \omega(r_{j+1}),
\end{align*}
by taking $\alpha=\alpha(n,s,p,q,\lambda,\Lambda)>0$ sufficiently small so that
$$
\sigma^{\alpha} \ge 1-\sigma^{\frac{sp}{q-1}}.
$$
This completes the proof.
\end{proof}

\section{Existence and Uniqueness of Weak Solution}
\label{sec5}

In this section we prove the existence and uniqueness of weak
solution to \eqref{pde} with a Dirichlet boundary condition. The
proof is based on a direct method in the calculus of variations. We
refer the reader to \cite{G,DKP1,FS} for the details. Before
introducing the compact embedding in $W^{s,G}(\Omega)$, we need the
following definition.

\begin{definition}
Let $A$ and $B$ be two Young functions.
We say B grows essentially more slowly near infinity than $A$ if
\begin{align}\label{ess}
\lim_{t \rightarrow \infty} \frac{B(\lambda t)}{A(t)} = 0
\end{align}
for every $\lambda > 0$.
\end{definition}
Note that the condition \eqref{ess} is equivalent to
\begin{align*}
\lim_{t \rightarrow \infty} \frac{A^{-1}(t)}{B^{-1}(t)} = 0.
\end{align*}
Let $s \in (0,1)$ and let $A$ be a Young function such that
\begin{align}\label{target}
\int_{0}^{1} \left( \frac{t}{A(t)} \right)^{\frac{s}{n-s}} dt < \infty
\quad\mbox{and}\quad
\int_{1}^{\infty} \left( \frac{t}{A(t)} \right)^{\frac{s}{n-s}} dt = \infty.
\end{align}
Then $A_{\frac{n}{s}}$ is given by
\begin{align}\label{t.space}
A_{\frac{n}{s}}(t) := A(H^{-1}(t)) \quad  \mbox{for }\  t \ge 0,
\end{align}
where the function $H:[0,\infty) \rightarrow [0,\infty)$ obeys
\begin{align*}
H(t) := \left( \int_{0}^{t} \left( \frac{\tau}{A(\tau)} \right)^{\frac{s}{n-s}}d\tau\right)^{\frac{n-s}{n}} \quad  \mbox{for }\ t \ge 0.
\end{align*}

\begin{lemma}(\cite[Theorem 3.5]{ACPS})
(Compact embedding) \label{cpt} Let $s \in (0,1)$ and let $A$ be a
Young function fulfilling \eqref{target}. Let $A_{\frac{n}{s}}$ be
the Young function defined as in \eqref{t.space}. Assume that $B$ is
a Young function. Then the following properties are equivalent.
\begin{enumerate}
\item B grows essentially more slowly near infinity than $A_{\frac{n}{s}}$.
\item The embedding
\begin{align*}
W^{s,A}(U) \rightarrow L^{B}(U)
\end{align*}
is compact for every bounded domain $U \subset \mr^{n}$ with Lipschitz boundary.
\end{enumerate}
\end{lemma}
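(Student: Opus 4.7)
The plan is to prove the two implications separately, using a Fréchet--Kolmogorov type argument in the Orlicz setting together with the continuous fractional Orlicz--Sobolev embedding $W^{s,A}(U)\hookrightarrow L^{A_{n/s}}(U)$, which I would establish first. That continuous embedding can be obtained by combining the pointwise fractional Riesz-potential representation (analogous to \eqref{fractionalpotential} in Lemma~\ref{lem.sobopoin}) with an Orlicz O'Neil--type inequality for the convolution with $|x|^{-(n-s)}$, using the growth condition \eqref{target} to identify the optimal target N-function $A_{n/s}$ defined via \eqref{t.space}.

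For the direction (1)$\Rightarrow$(2), take a bounded sequence $\{f_k\}\subset W^{s,A}(U)$. By the continuous embedding, $\{f_k\}$ is bounded in $L^{A_{n/s}}(U)$. I would then verify the two hypotheses of a Vitali--Fréchet--Kolmogorov criterion in $L^B(U)$: first, precompactness in $L^1(U)$ (or in measure), obtained from a translation-continuity estimate $\|f_k(\cdot+h)-f_k\|_{L^A(U_h)}\le c\,[f_k]_{s,A,U}\,|h|^s$ derived from the Gagliardo seminorm via a Fubini argument (here the $\Delta_2$ condition on $A$ is used to pass between the Luxemburg norm and the modular); and second, equi-integrability of $\{B(|f_k|)\}$, which follows from the hypothesis that $B$ grows essentially more slowly than $A_{n/s}$: given $\varepsilon>0$, the condition $B(\lambda t)/A_{n/s}(t)\to 0$ produces a truncation level $T$ such that $B(|f_k|)\chi_{\{|f_k|>T\}}\le\varepsilon\,A_{n/s}(|f_k|)$, whose integral is uniformly bounded. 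Combining the two yields a subsequence convergent in $L^B(U)$.

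For (2)$\Rightarrow$(1) I would argue by contradiction. If (1) fails, there exist $\lambda_0>0$, $\delta>0$ and $t_k\to\infty$ with $B(\lambda_0 t_k)\ge \delta\,A_{n/s}(t_k)$. Using the scaling properties of $A$ and $A_{n/s}$ and the definition \eqref{t.space}, I would construct an explicit sequence of suitably rescaled and translated bumps $\{\varphi_k\}\subset C_c^\infty(U)$, supported on disjoint balls of radii $\rho_k\to 0$, normalized so that $\|\varphi_k\|_{W^{s,A}(U)}$ stays bounded while $\int_U B(|\varphi_k|)\,dx\ge c\delta$; disjointness of supports then prevents the existence of an $L^B$-convergent subsequence, contradicting compactness.

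The main obstacle will be the translation-continuity step for the $L^1$-precompactness: in the $L^p$ setting this is an immediate Fubini/Jensen computation from the Gagliardo seminorm, but in the Orlicz framework one must carefully switch between the modular $\int A(|f(x+h)-f(x)|/|h|^s)\,dx$ and the Luxemburg norm, and extract an explicit $|h|^s$ modulus of continuity; this is where the $\Delta_2$ and $\nabla_2$ conditions on $A$ become essential. The construction of the extremal bumps in the converse direction also requires a nontrivial choice of amplitude and scale adapted to $A_{n/s}$ rather than a simple power, so that the Gagliardo seminorm remains controlled while the $L^B$ mass does not vanish.
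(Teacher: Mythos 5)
The paper itself offers no proof of Lemma~\ref{cpt}: it is quoted verbatim from \cite{ACPS} (their Theorem~3.5) and then used as a black box to derive Lemma~\ref{lem.embedding}. So there is no internal argument to compare your proposal against; the relevant comparison is with the source reference.

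Taken on its own terms, your outline follows the classical Fr\'echet--Kolmogorov/Vitali route, and the overall shape (sharp continuous embedding, then translation continuity plus equi-integrability for (1)$\Rightarrow$(2), then disjointly supported extremal bumps for (2)$\Rightarrow$(1)) is very likely close in spirit to what \cite{ACPS} do. Two caveats are worth making explicit. First, the continuous embedding $W^{s,A}(U)\hookrightarrow L^{A_{n/s}}(U)$, which you propose to ``establish first,'' is in fact the central and hardest theorem of that same reference: identifying the optimal target \eqref{t.space} from the Riesz potential representation requires the full Orlicz rearrangement/O'Neil machinery, and your one-sentence sketch does not discharge that burden, so as written the argument is conditional on the very result being cited. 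Second, Lemma~\ref{cpt} as stated assumes only that $A$ is a Young function satisfying \eqref{target}, with no $\Delta_2$ or $\nabla_2$ hypothesis, whereas several of your steps invoke $\Delta_2$ for $A$ (and implicitly for $A_{n/s}$): the translation-continuity bound $\|\tau_hf-f\|_{L^A(U_h)}\lesssim|h|^s[f]_{s,A,U}$ does not come from substituting $y=x+h$ into the Gagliardo modular (that restricts to a measure-zero set of pairs) but needs an averaging over $z\in B_{|h|}$ combined with Jensen and $\Delta_2$; and in the equi-integrability step you need a uniform bound on the $A_{n/s}$-\emph{modular}, not merely the Luxemburg norm, which again uses a doubling property. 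None of this is a fatal flaw for the present paper, since $G$ does satisfy $\Delta_2$ and $\nabla_2$, but you should recognize the proposal as a strategy outline for a slightly restricted statement rather than a self-contained proof of the lemma as cited.
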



From the above lemma, we see the following compact embedding result.

\begin{lemma}\label{lem.embedding}  
Suppose an N-function $G$ satisfies \eqref{growth} and $0<s<1$. The embedding 
\begin{align*}
W^{s,G}(U) \rightarrow L^{G}(U)
\end{align*}
is compact for any bounded domain $U \subset \mr^{n}$ with Lipschitz boundary.
\end{lemma}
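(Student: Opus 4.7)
The plan is to apply Lemma~\ref{cpt} with $A=B=G$. This requires verifying two conditions: (i) that $G$ satisfies the integrability condition \eqref{target}, which is precisely what makes the Sobolev conjugate $G_{n/s}$ in \eqref{t.space} a well-defined Young function on $[0,\infty)$; and (ii) that $G$ grows essentially more slowly near infinity than $G_{n/s}$.

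For (i), I would substitute the two-sided power estimates in \eqref{Gineq} (equivalent to \eqref{growth}) into the integrand $(t/G(t))^{s/(n-s)}$: on $(0,1]$, $G(t)\ge G(1)t^q$ gives $(t/G(t))^{s/(n-s)}\le c\,t^{s(1-q)/(n-s)}$, which is integrable at $0$; on $[1,\infty)$, $G(t)\le G(1)t^q$ gives $(t/G(t))^{s/(n-s)}\ge c\,t^{s(1-q)/(n-s)}$, whose integral diverges. For (ii), using the lower bound $G(\tau)\ge G(1)\tau^p$ for $\tau\ge 1$ inside the defining integral of $H$, one finds $H(t)\lesssim t^{(n-sp)/n}$ and hence $H^{-1}(t)/t\to\infty$ as $t\to\infty$. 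Combined with the $\Delta_2$ condition \eqref{del2}, which absorbs any dilation $\lambda>0$ into a fixed multiplicative constant on $G$, this yields
\[
\frac{G(\lambda t)}{G_{n/s}(t)}=\frac{G(\lambda t)}{G(H^{-1}(t))}\longrightarrow 0\quad\text{as } t\to\infty,
\]
which is exactly the ``essentially more slowly'' condition \eqref{ess}. Lemma~\ref{cpt} then delivers the compact embedding.

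The main obstacle is the bookkeeping between the $p$- and $q$-exponents when extracting the strictly faster asymptotic growth of $G_{n/s}$ compared to $G$, since the two-sided power estimate coming from \eqref{growth} is the only quantitative handle available. In any super-critical edge case where \eqref{target} would fail and $G_{n/s}$ is not a genuine Young function on $[0,\infty)$, the compact embedding can be obtained instead from the chain $W^{s,G}(U)\hookrightarrow W^{s,p}(U)\hookrightarrow C^{0,\alpha}(\overline{U})\hookrightarrow\hookrightarrow L^G(U)$ via the fractional Morrey embedding (when $sp>n$) followed by Arzel\`a--Ascoli on the bounded Lipschitz domain $U$.
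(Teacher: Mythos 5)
Your treatment of the sub-critical case is essentially the paper's: both verify the integrability condition \eqref{target} with $A=G$ (which, as your own computation shows via the exponent $s(1-q)/(n-s)>-1$, holds exactly when $s<n/q$) and then confirm that $G$ grows essentially more slowly near infinity than $G_{n/s}$ before invoking Lemma~\ref{cpt}. The calculation $H(t)\lesssim t^{(n-sp)/n}$ and the use of $\Delta_2$ to absorb the dilation are fine.

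The gap is in your fallback for $s\ge n/q$. You propose the chain $W^{s,G}(U)\hookrightarrow W^{s,p}(U)\hookrightarrow C^{0,\alpha}(\overline{U})\hookrightarrow\hookrightarrow L^G(U)$, but the Morrey step requires $sp>n$, whereas the full range you need to cover is $s\in[n/q,1)$, which for $p<q$ contains values with $sp\le n$ (indeed the whole interval $[n/q,\min\{1,n/p\}]$). So the chain breaks down precisely on the part of the super-critical range where $n/q\le s\le n/p$, which is not an ``edge case'': if $q>n$ and $p\le n$ it is the entire super-critical range. There is also a secondary issue with the first link: $W^{s,G}(U)\hookrightarrow W^{s,p}(U)$ with no loss of smoothness is not clear, because on the near-diagonal region where $|v(x)-v(y)|/|x-y|^s\le 1$ the lower bound $G(t)\ge G(1)t^q$ controls the $q$-th power rather than the $p$-th; one only gets $W^{s,G}(U)\hookrightarrow W^{\tilde s,p}(U)$ for $\tilde s<s$. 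The paper avoids both problems by proving instead that the embedding $W^{s,G}(U)\to W^{\tilde s,G}(U)$ is continuous for every $\tilde s\in(0,s)$ (by the technique of \cite[Proposition 2.1]{DPV1}), then picking $\tilde s<n/q$ and applying the sub-critical compactness at level $\tilde s$; this covers the whole range $s\ge n/q$ uniformly. You should replace the Morrey detour with this lowering-of-$s$ argument.
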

\begin{proof}
We note that $G$ grows essentially more slowly near infinity than $G_{\frac{n}{s}}$, and that $G$ satisfies \eqref{target} with $A=G$ if $0 < s < \frac{n}{q}$.  Therefore, Lemma \ref{cpt} directly implies the compact embedding from $W^{s,G}(U)$ to $L^{G}(U)$ when $s < \frac{n}{q}$.  Moreover, by taking advantage of the technique in \cite[Proposition 2.1]{DPV1}, one can see that the embedding $W^{s,G}(U)\to W^{\tilde s,G}(U)$ is continuous, i.e.,  $\|u\|_{W^{\tilde s,G}(U)}\le c \|u\|_{W^{s,G}(U)}$, for every $\tilde s\in (0,s)$. Therefore, the embedding $W^{s,G}(U) \rightarrow L^{G}(U)$ is also compact when $s \ge  \frac{n}{q}$.
\end{proof}

We next recall the following Poincar\'e type inequality.
\begin{lemma}(\cite[Corollary 6.2]{FS})
\label{poin} Let $U \subset \mr^{n}$ be a bounded open set with
Lipschitz boundary. Suppose $G$ is an N-function satisfying
\eqref{growth}. Then there exists a constant $c>0$ depending on
$n,s,p,q$ and $U$ such that
\begin{align*}
\int_{U}G(|u|)dx \le c\int_{\mr^{n}}\int_{\mr^{n}}G\left( \frac{|u(x)-u(y)|}{|x-y|^{s}} \right)\frac{dxdy}{|x-y|^{n}}
\end{align*}
for every $s \in (0,1)$ and $u \in W^{s,G}(U)$.
\end{lemma}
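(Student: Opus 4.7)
The plan is to interpret the statement as applying to $u$ extended by zero outside $U$---otherwise a nonzero constant function would contradict the inequality, and in Section~\ref{sec5} the lemma is applied precisely to admissible functions satisfying $\eta\equiv 0$ on $\mr^n\setminus\Omega$. So I treat $u$ as defined on $\mr^n$ with $u\equiv 0$ outside $U$, and aim to reduce the $L^G$-bound on $u$ to the Gagliardo-type double integral appearing on the right.

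First I would fix an open ball $B$ with $\bar U\subset B$ and set $D:=\mathrm{diam}(B)$, so that $|B\setminus U|>0$. Since $u$ vanishes on $B\setminus U$, for every $x\in U$ the identity
\[
u(x) \;=\; \frac{1}{|B\setminus U|}\int_{B\setminus U}(u(x)-u(y))\,dy
\]
holds, and Jensen's inequality applied to the convex function $G$ gives
\[
G(|u(x)|) \;\le\; \frac{1}{|B\setminus U|}\int_{B\setminus U}G(|u(x)-u(y)|)\,dy.
\]
I would then insert the factor $|x-y|^s$ into the argument of $G$: since $|x-y|\le D$ for $x\in U,\ y\in B\setminus U$, a case split according to whether $|x-y|^s \le 1$ or $|x-y|^s > 1$ combined with \eqref{Gineq} yields the uniform bound
\[
G(|u(x)-u(y)|) \;\le\; \max\{1,D^{sq}\}\,G\!\left(\frac{|u(x)-u(y)|}{|x-y|^s}\right).
\]
Combining this with $|x-y|^{-n}\ge D^{-n}$ on $U\times(B\setminus U)$, integrating over $x\in U$, and finally enlarging the domain of the resulting double integral from $U\times(B\setminus U)$ to $\mr^n\times\mr^n$ (valid since the integrand is nonnegative) produces the claimed inequality, with a constant depending on $n,s,p,q$ and $U$ through $D$ and $|B\setminus U|$. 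The Lipschitz hypothesis on $\partial U$ is used only to guarantee the existence of such a ball $B$ with a quantitative gap.

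The step I expect to be most delicate is not any single calculation but the interpretive choice itself, since the statement as literally written (with $u\in W^{s,G}(U)$ and no zero-trace assumption) admits nonzero constant counterexamples. An alternative route avoiding the explicit averaging is a compactness--contradiction argument: if the inequality failed, one would obtain a sequence $\{u_k\}$ normalized so that $\int_U G(|u_k|)\,dx=1$ while the Gagliardo double integral tends to zero; the compact embedding of Lemma~\ref{lem.embedding} then extracts an $L^G$-convergent subsequence whose limit must be constant (since its Gagliardo semi-norm vanishes) and hence zero (by the zero extension), contradicting the normalization. This alternative is conceptually cleaner but hinges on an $L^G$-lower semicontinuity of the Gagliardo functional that has to be checked separately, which is why the direct averaging proof sketched above seems preferable.
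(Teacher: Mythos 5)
The paper does not actually prove this lemma; it cites it externally as \cite[Corollary 6.2]{FS}, so there is no internal argument to compare against. Your interpretive point is the key one and you handle it correctly: as literally stated, the right-hand integral over $\mr^n\times\mr^n$ is undefined for $u$ living only on $U$, so the intended meaning is $u$ extended by zero (a zero-trace Poincar\'e inequality), which is exactly how the lemma is invoked in Section~\ref{sec5}, where the test functions vanish outside $\Omega$. That is also the setting of FS Corollary 6.2, which is stated for $W^{s,G}_0(U)$.

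Your direct averaging argument is correct in all its steps. The identity $u(x)=\frac{1}{|B\setminus U|}\int_{B\setminus U}(u(x)-u(y))\,dy$ uses the zero extension; Jensen plus monotonicity of $G$ gives the pointwise bound; the case split on $|x-y|^s\lessgtr 1$ with \eqref{Gineq} yields $G(|u(x)-u(y)|)\le \max\{1,D^{sq}\}\,G\bigl(|u(x)-u(y)|/|x-y|^s\bigr)$; the crude bound $|x-y|^{-n}\ge D^{-n}$ inserts the singular measure; and enlarging the domain of the (nonnegative) double integral finishes the job. Two small remarks. First, since $\max\{1,D^{sq}\}\le\max\{1,D^{q}\}$ for $s\in(0,1)$, your constant is in fact uniform in $s$, which reconciles the somewhat awkward phrasing ``there exists $c$ depending on $s$ \dots for every $s\in(0,1)$.'' Second, your proof never uses the Lipschitz hypothesis on $\partial U$: any bounded open $U$ and any ball $B\supset\bar U$ with $|B\setminus U|>0$ suffice. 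That hypothesis belongs to the more general statement in FS (whose proof proceeds through fractional Sobolev embeddings rather than your elementary averaging), not to what you actually need here. Your alternative compactness--contradiction route via Lemma~\ref{lem.embedding} is also viable and closer in spirit to the standard textbook argument, though as you say it requires checking lower semicontinuity of the modular $v\mapsto\int\!\int G\bigl(|v(x)-v(y)|/|x-y|^s\bigr)|x-y|^{-n}\,dx\,dy$ under $L^G$-convergence (which follows from Fatou after passing to an a.e.-convergent subsequence). The direct proof you sketch is cleaner and self-contained.
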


To prove the existence and uniqueness of weak solution to
\eqref{pde}, we consider the following the energy functional
\begin{align}\label{energy}
\mathcal I[v]:=\iint_{C_{\Omega}}G \left( \frac{|v(x)-v(y)|}{|x-y|^{s}} \right)K(x,y)\,dxdy,
\end{align}
where
\begin{align*}
v \in \mathcal{A}_{f}(\Omega):= \{v\in \mathbb W^{s,G}(\Omega):\ v=f \quad \mbox{in }\ \R^n\setminus\Omega\}.
\end{align*}
We say $u \in \mathcal{A}_{f}(\Omega)$ is a minimizer of $\mathcal I$ over $\mathcal{A}_{f}(\Omega)$
if $\mathcal I[u] \le \mathcal I[v]$ for all $v \in \mathcal{A}_{f}(\Omega)$.

\begin{theorem}
Let $\Omega\subset\R^n$ be a bounded domain, the operator
$\mathcal L$ and an N-function $G$ be given as in
Section~\ref{sec1}, and $f \in \mathbb{W}^{s,G}(\Omega)$. Then there
exists a unique minimizer $u$ of $\mathcal I$ over
$\mathcal{A}_{f}(\Omega)$. Moreover, a function $u \in
\mathcal{A}_{f}(\Omega)$ is the minimizer of $\mathcal I$ over
$\mathcal{A}_{f}(\Omega)$ if and only if it is the weak solution to
\begin{align}
\label{pde1}
\begin{cases}
\mathcal L u=0 &\mbox{in}\ \ \Omega, \\
u=f &\mbox{in}\ \ \mr^{n} \setminus \Omega.
\end{cases}
\end{align}
\end{theorem}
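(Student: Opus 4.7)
The plan is to apply the direct method of the calculus of variations to $\mathcal{I}$, and then identify minimizers with weak solutions via a first-variation computation. Since $f \in \mathbb{W}^{s,G}(\Omega) \subset \mathcal{A}_{f}(\Omega)$, the infimum $I_0 := \inf_{\mathcal{A}_f(\Omega)} \mathcal{I}$ is finite. I would take a minimizing sequence $\{u_k\} \subset \mathcal{A}_f(\Omega)$, so $u_k \equiv f$ on $\R^n \setminus \Omega$ and $\mathcal{I}[u_k] \to I_0$. The lower bound in \eqref{kernel} gives that the Gagliardo functional of $u_k$ over $C_\Omega$ is uniformly bounded, and splitting $u_k = (u_k - f) + f$ and applying Lemma \ref{poin} to $u_k - f$ (which vanishes outside $\Omega$) yields a uniform bound on $\|u_k\|_{L^G(\Omega)}$. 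Hence $\{u_k|_{\Omega}\}$ is bounded in $W^{s,G}(\Omega)$, and the compact embedding of Lemma \ref{lem.embedding} produces a subsequence (still denoted $u_k$) converging in $L^G(\Omega)$, and by passing to a further subsequence, a.e.\ in $\R^n$, to a limit $u$. By construction $u = f$ on $\R^n \setminus \Omega$, and Fatou's lemma applied to the non-negative integrand $G(|u_k(x) - u_k(y)|/|x-y|^s)K(x,y)$ on $C_\Omega$ yields $\mathcal{I}[u] \le \liminf_k \mathcal{I}[u_k] = I_0$, so that $u$ is a minimizer.

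For uniqueness, given two minimizers $u_1, u_2$, the function $w := (u_1 + u_2)/2$ lies in $\mathcal{A}_f(\Omega)$, and the convexity of $G$ and of $|\cdot|$ combined with $K > 0$ gives $\mathcal{I}[w] \le \tfrac{1}{2}(\mathcal{I}[u_1] + \mathcal{I}[u_2]) = I_0$. The strict convexity of $G$ on $[0,\infty)$ (which follows from $1 < p \le tg(t)/G(t)$ in \eqref{growth}) then forces $u_1(x) - u_1(y) = u_2(x) - u_2(y)$ a.e.\ on $C_\Omega$, and since $u_1 = u_2 = f$ on $\R^n \setminus \Omega$, this implies $u_1 = u_2$ a.e.\ in $\R^n$.

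For the equivalence with \eqref{pde1}, I would carry out the standard first-variation argument. For any test function $\eta \in \mathbb{W}^{s,G}(\Omega)$ with $\eta \equiv 0$ on $\R^n \setminus \Omega$ and any $t \in \R$, the perturbation $u + t\eta$ lies in $\mathcal{A}_f(\Omega)$, and the convex function $j(t) := \mathcal{I}[u + t\eta]$ achieves its minimum at $t = 0$ if and only if $u$ is a minimizer. Differentiating under the integral sign, and using $G'(s) = g(s)$ with the chain rule for $s \mapsto |s|$ where $s \ne 0$, one obtains
\[
j'(0) = \iint_{C_\Omega} g\!\left(\tfrac{|u(x) - u(y)|}{|x-y|^s}\right) \tfrac{u(x)-u(y)}{|u(x)-u(y)|}\,\tfrac{\eta(x) - \eta(y)}{|x-y|^s}\,K(x,y)\,dx\,dy,
\]
which is precisely the weak formulation from Definition \ref{sol} (the set $\{u(x) = u(y)\}$ contributing zero). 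Thus $j'(0) = 0$ for all admissible $\eta$ if and only if $u$ is a weak solution, and by convexity of $j$ this is equivalent to $u$ being a minimizer.

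The main technical obstacle is justifying the differentiation under the integral sign in the presence of the general $G$-growth: I would produce an $L^1$-majorant for the difference quotient $t^{-1}[G(|a + tb|/\rho) - G(|a|/\rho)]$ on $C_\Omega$ (with $a = u(x) - u(y)$, $b = \eta(x) - \eta(y)$, $\rho = |x-y|^s$), using convexity together with the bound $sg(s) \le qG(s)$ from \eqref{growth} and Young's inequality \eqref{Young} to dominate the quotient by a constant multiple of $G(|a|/\rho) + G(|b|/\rho)$, both of which are integrable against $K(x,y)\,dx\,dy$ on $C_\Omega$ by the assumption $u, \eta \in \mathbb{W}^{s,G}(\Omega)$ and \eqref{kernel}. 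A secondary point is the weak lower semicontinuity on the unbounded set $C_\Omega$, which I would handle by Fatou together with the a.e.\ convergence produced from the compact embedding, rather than through weak compactness of nonlocal gradients.
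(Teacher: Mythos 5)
Your proposal follows the paper's strategy (direct method, compact embedding, Fatou, strict convexity, first variation) closely, and the main steps are correct; two differences are worth noting. First, for the implication that a weak solution is a minimizer, the paper avoids any differentiability issues by a direct algebraic argument: it tests the weak formulation with $\eta = u - v$ and applies Young's inequality \eqref{Young0} together with \eqref{Young1} to the cross term $g\bigl(\tfrac{|u(x)-u(y)|}{|x-y|^s}\bigr)\tfrac{u(x)-u(y)}{|u(x)-u(y)|}\tfrac{v(x)-v(y)}{|x-y|^s}$, which gives $\mathcal I[v]\ge\mathcal I[u]$ immediately with no differentiation at all. You instead argue through convexity of $j(t)=\mathcal I[u+t\eta]$ and $j'(0)=0$; this is equally valid, and your proposed $L^1$-majorant for the difference quotient (bounding the derivative of $\tau\mapsto G(|a+\tau b|/\rho)$ by $g((|a|+|b|)/\rho)\,|b|/\rho$ and controlling that by $G(|a|/\rho)+G(|b|/\rho)$ via \eqref{Young} and \eqref{Young1}) does close a differentiation-under-the-integral gap that the paper in fact glosses over even in its own forward implication, so on this point you are being more careful than the source. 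Second, and this is a genuine though easily repaired gap: you run the compactness argument on $\Omega$ itself, bounding $\|u_k\|_{W^{s,G}(\Omega)}$ and invoking Lemma~\ref{lem.embedding} on $\Omega$, but both Lemma~\ref{lem.embedding} and Lemma~\ref{poin} require a domain with Lipschitz boundary, while the theorem assumes only that $\Omega$ is a bounded domain. The paper circumvents this by passing to $v_m:=u_m-f$, which vanishes outside $\Omega$ and therefore lies in $W^{s,G}(B)$ for a fixed ball $B\supset\Omega$; the Poincar\'e and compact embedding lemmas are then applied on $B$, which is Lipschitz. You should make the same move to cover general bounded $\Omega$.
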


\begin{proof}
\textit{Step 1.} We first prove the existence and uniqueness of 
minimizer of $\mathcal I$. Since $f \in \mathcal{A}_{f}(\Omega)$,
$\mathcal{A}_{f}(\Omega)$ is nonempty. We now choose a minimizing
sequence $\{u_{m}\}_{m \ge 1}$ in $\mathcal{A}_{f}(\Omega)$ so that
$\mathcal I[u_{m}]$ is non-increasing and $\ds \lim_{m \rightarrow
\infty} \mathcal I[u_{m}] = \underset{w \in
\mathcal{A}_{f}(\Omega)}{\inf}\mathcal{I}[w]$. Set $v_{m}:=u_{m}-f$. Then
$\{v_{m}\}_{m \ge 1} \subset \mathbb{W}^{s,G}(\Omega)$ and $v_m=0$
in $\R^n\setminus \Omega$. Choose a ball $B=B_R(0)$ such that
$B\supset\Omega$. In order to use the compactness argument, we need
to show that $\|v_{m}\|_{W^{s,G}(B)}$ is bounded for $m$. Using
\eqref{lux}, \eqref{gar} and Lemma~\ref{poin}, and the fact that
$v_m=0$ in $\R^n\setminus \Omega$, we find
\begin{align*}
\|v_{m}\|_{W^{s,G}(B)} &\le \int_{B}G\left( |v_{m}| \right)dx +
\int_{B}\int_{B}G\left( \frac{|v_{m}(x)-v_{m}(y)|}{|x-y|^{s}} \right)\frac{dxdy}{|x-y|^{n}} + 2\\
&\le c\left[ \iint_{C_{\Omega}}G\left( \frac{|v_{m}(x)-v_{m}(y)|}{|x-y|^{s}} \right)K(x,y)\,dxdy + 2 \right]\\
&\le c(\mathcal I[u_{m}]+ \mathcal I[f]+2) \le c.
\end{align*}
Since $\mathcal I[u_{m}]$ is bounded, so is
$\|v_{m}\|_{W^{s,G}(B)}$. By the compactness result in
Lemma~\ref{lem.embedding}, there exist a subsequence $\{v_{m_{j}}\}_{j \ge
1}$ and $v \in W^{s,G}(B)$ such that
\begin{align*}
\begin{cases}
v_{m_{j}} \rightharpoonup v &\mbox{weakly in}\  \ W^{s,G}(B), \\
v_{m_{j}} \rightarrow v &\mbox{in}\ \ L^{G}(B),\\
v_{m_{j}} \rightarrow v &\mbox{a.e. in}\ \ B,
\end{cases}
\quad \text{as} \quad j\to \infty.
\end{align*}
Now extend $v$ by zero outside $B$ and set $u=v+f$. Then we see that
$u\in \mathcal A_f(\Omega)$ and $\ds \lim_{j\to \infty}u_{m_{j}} =u$
a.e. in $\mr^{n}$. Therefore $v \in \mathbb{W}^{s,G}(\Omega)$ such
that $v=0$ in $\mr^{n} \setminus \Omega$ and so $v+f \in
\mathcal{A}_{f}(\Omega)$. Then Fatou's lemma implies
\begin{align*}
\mathcal I[u] \le \liminf_{j \rightarrow \infty} \mathcal I[u_{m_{j}}] =\underset{w \in \mathcal{A}_{f}}{\inf}\mathcal{I}[w].
\end{align*}
The uniqueness directly follows from the convexity of $G$. Indeed,
to prove this, we first suppose that $u,v \in
\mathcal{A}_{f}(\Omega)$ are two different minimizers of $\mathcal
I$. Then $\mathcal{I}[u] = \mathcal{I}[v]$. Since $G$ is strictly convex, we have
\begin{align*}
\mathcal I[u] \le\mathcal I\left[ \frac{u+v}{2} \right] <
\frac{\mathcal I[u]+\mathcal I[v]}{2} =\mathcal I[u],
\end{align*}
which is a contradiction.

\textit{Step 2.} We next show the equivalence between the minimizer
of \eqref{energy} and a weak solution to \eqref{pde1}. Suppose $u$
is the minimizer of \eqref{energy}. Then for any $\eta\in \mathbb
W^{s,G}(\Omega)$ with $\eta=0$ in $\R^n\setminus \Omega$, $\mathcal
I[u+\tau \eta]$ has a critical point at $\tau=0$. Thus
\begin{align*}
0 &= \frac{d}{d\tau}\mathcal{I}[u+\tau \eta]\Big |_{\tau=0}\\
&= \iint_{C_{\Omega}}\frac{d}{d\tau}G\left( \frac{|u(x)-u(y)+\tau(\eta(x)-\eta(y))|}{|x-y|^{s}} \right)\Big |_{\tau=0}K(x,y)\, dxdy\\
&= \iint_{C_{\Omega}}g\left( \frac{|u(x)-u(y)|}{|x-y|^{s}} \right)\frac{u(x)-u(y)}{|u(x)-u(y)|}(\eta(x)-\eta(y))K(x,y)\,\frac{dxdy}{|x-y|^{s}}.
\end{align*}
Therefore $u$ is a weak solution to \eqref{pde1}.

On the other hand, suppose $u$ is a weak solution to \eqref{pde1}.
Then for any $v \in \mathcal{A}_{f}(\Omega)$, we see that $u-v \in
\mathbb{W}^{s,G}(\mr^{n})$ and that $u-v=0$ in $\mr^{n} \setminus
\Omega$. We then test $u-v$ in the weak formulation of \eqref{pde1}
to discover
\begin{align}\label{eu.test}
\begin{split}
0&= \iint_{C_{\Omega}}g\left(\frac{|u(x)-u(y)|}{|x-y|^s}\right)\frac{u(x)-u(y)}{|u(x)-u(y)|}(\eta(x)-\eta(y))
K(x,y)\frac{dxdy}{|x-y|^{s}}\\
&= \iint_{C_{\Omega}}g\left(\frac{|u(x)-u(y)|}{|x-y|^s}\right)|u(x)-u(y)|K(x,y)\frac{dxdy}{|x-y|^{s}}\\
&\qquad -\iint_{C_{\Omega}}g\left(\frac{|u(x)-u(y)|}{|x-y|^s}\right)
\frac{u(x)-u(y)}{|u(x)-u(y)|}(v(x)-v(y))K(x,y)\frac{dxdy}{|x-y|^{s}}.
\end{split}
\end{align}
Let us look at the integrand of the second term with respect to the
measure $K(x,y)\,dxdy$ on the right-hand side. From \eqref{Young0} and
\eqref{Young1}, we see
\begin{align}
\label{eu.young}
\begin{split}
&g\left(\frac{|u(x)-u(y)|}{|x-y|^{s}}\right)\frac{u(x)-u(y)}{|u(x)-u(y)|}\frac{v(x)-v(y)}{|x-y|^{s}}\le
g\left(\frac{|u(x)-u(y)|}{|x-y|^{s}}\right)\frac{|v(x)-v(y)|}{|x-y|^{s}}\\
&\le G^{*}\left( g\left( \frac{|u(x)-u(y)|}{|x-y|^s} \right) \right) + G\left( \frac{|v(x)-v(y)|}{|x-y|^{s}} \right)\\
&= \frac{|u(x)-u(y)|}{|x-y|^s}g\left( \frac{|u(x)-u(y)|}{|x-y|^{s}}
\right) - G\left( \frac{|u(x)-u(y)|}{|x-y|^{s}} \right) + G\left(
\frac{|v(x)-v(y)|}{|x-y|^{s}} \right).
\end{split}
\end{align}
We combine \eqref{eu.young} and \eqref{eu.test} to conclude that
$\mathcal I[v] \ge \mathcal I[u]$. Therefore $u$ is the minimizer of $\mathcal I$.
\end{proof}

\begin{bibdiv}
\begin{biblist}

\bib{ACPS}{article}{
      author={Alberico, A.},
      author={Cianchi, A.},
      author={Pick, L.},
      author={Slav\'{\i}kov\'{a}, L.},
       title={On fractional {O}rlicz-{S}obolev spaces},
        date={2021},
     journal={Anal. Math. Phys.},
      volume={11},
      number={2},
       pages={Paper No. 84, 21},
}

\bib{BOS}{article}{
  title={H\"older regularity for weak solutions to nonlocal double phase problems},
  author={Byun, S.},
   author={Ok, J.}
   author={Song, K.},
  journal={arXiv:2108.09623},
  year={2021}
}

\bib{CCV1}{article}{
  title={Regularity theory for parabolic nonlinear integral operators},
  author={Caffarelli, L.},
  author={Chan, C. H.},
author={Vasseur, A.},
        date={2011},
     journal={J. Amer. Math. Soc.},
      volume={24},
      number={3},
       pages={849\ndash 869},
}

\bib{CK1}{article}{
      author={Chaker, J.},
      author={Kim, M.},
      title={Local regularity for nonlocal equations with variable exponents}, 
      journal={arXiv.2107.06043},
      year={2021},
}

\bib{CKW1}{article}{
      author={Chaker, J.},
      author={Kim, M.},
      author={Weidner, M.},
       title={Regularity for nonlocal problems with non-standard growth},
     journal={arXiv:2111.09182},
  year={2021}
}

\bib{Coz}{article}{
   author={Cozzi, M.},
   title={Regularity results and Harnack inequalities for minimizers and
   solutions of nonlocal problems: a unified approach via fractional De
   Giorgi classes},
   journal={J. Funct. Anal.},
   volume={272},
   date={2017},
   number={11},
   pages={4762--4837},
}

\bib{CS}{article}{
   author={Caffarelli, L.},
   author={Silvestre, L.},
   title={Regularity theory for fully nonlinear integro-differential
   equations},
   journal={Comm. Pure Appl. Math.},
   volume={62},
   date={2009},
   number={5},
   pages={597\ndash 638},
}

\bib{DE}{article}{
      author={Diening, L.},
      author={Ettwein, F.},
       title={Fractional estimates for non-differentiable elliptic systems with
  general growth},
        date={2008},
     journal={Forum Math.},
      volume={20},
      number={3},
       pages={523\ndash 556},
}

\bib{DKP1}{article}{
   author={Di Castro, A.},
   author={Kuusi, T.},
   author={Palatucci, G.},
   title={Nonlocal Harnack inequalities},
   journal={J. Funct. Anal.},
   volume={267},
   date={2014},
   number={6},
   pages={1807--1836},
}

\bib{DKP2}{article}{
      author={Di Castro, A.},
      author={Kuusi, T.},
      author={Palatucci, G.},
       title={Local behavior of fractional $p$-minimizers},
        date={2016},
     journal={Ann. Inst. H. Poincar\'{e} Anal. Non Lin\'{e}aire},
      volume={33},
      number={5},
       pages={1279\ndash 1299},
}

\bib{DLO}{article}{
      author={Diening, L.},
      author={Lee, M.},
      author={Ok, J.},
      title={Parabolic weighted sobolev-poincar\'e type inequalities},
      journal={arXiv:2108.00656},
      year={2021},
}

\bib{DSV1}{article}{
      author={Diening, L.},
      author={Stroffolini, B.},
      author={Verde, A.},
      title={Everywhere regularity of functionals with $\varphi$-growth},
      journal={Manuscripta Math.},
   volume={129},
   date={2009},
   number={4},
   pages={449--481},
}

\bib{DPV1}{article}{
      author={Di~Nezza, E.},
      author={Palatucci, G.},
      author={Valdinoci, E.},
      title={Hitchhiker's guide to the fractional Sobolev spaces},
      journal={Bull. Sci. Math.},
   volume={136},
   date={2012},
   number={5},
   pages={521--573},
}

\bib{DP}{article}{
   author={De Filippis, C.},
   author={Palatucci, G.},
   title={H\"{o}lder regularity for nonlocal double phase equations},
   journal={J. Differential Equations},
   volume={267},
   date={2019},
   number={1},
   pages={547--586},
}

\bib{DZZ}{article}{
   author={Ding, M.},
   author={Zhang, C.},
   author={Zhou, S.},
   title={Local boundedness and H\"{o}lder continuity for the parabolic
   fractional $p$-Laplace equations},
   journal={Calc. Var. Partial Differential Equations},
   volume={60},
   date={2021},
   number={1},
   pages={Paper No. 38, 45},
}

\bib{FS}{article}{
      author={Fern\'{a}ndez Bonder, J.},
      author={Salort, A. M.},
       title={Fractional order {O}rlicz-{S}obolev spaces},
        date={2019},
     journal={J. Funct. Anal.},
      volume={277},
      number={2},
       pages={333\ndash 367},
}

\bib{FSV1}{article}{
  title={Interior and up to the boundary regularity for the fractional $g$-Laplacian: the convex case},
  author={Fern\'{a}ndez Bonder, J.},
  author={Salort, A.},
 author={Vivas, H.},
  journal={arXiv:2008.05543},
  year={2020}
}

\bib{FSV2}{article}{
  title={Global Hölder regularity for eigenfunctions of the fractional g-Laplacian},
  author={Fern\'{a}ndez Bonder, J.},
  author={Salort, A.},
 author={Vivas, H.},
  journal={arXiv:2112.00830},
  year={2021}
}

\bib{FZ}{article}{
      author={Fang, Y.},
      author={Zhang, C.},
      title={On weak and viscosity solutions of nonlocal double phase equations},
      journal={arXiv:2106.04412},
      year={2021},
}

\bib{G}{book}{
      author={Giusti, E.},
       title={Direct methods in the calculus of variations},
   publisher={World Scientific Publishing Co., Inc., River Edge, NJ},
        date={2003},
}

\bib{GK}{article}{
      author={Garain, P.},
      author={Kinnunen, J.},
      title={On the regularity theory for mixed local and nonlocal quasilinear elliptic equations},
      journal={arXiv2102.13365},
      year={2021},
}

\bib{GKS}{article}{
      author={Giacomoni, J.},
      author={Kumar, D.},
      author={Sreenadh, K.},
      title={Interior and boundary regularity results for strongly nonhomogeneous $p,q$-fractional problems},
      journal={arXiv:2102.06080},
      year={2021},
}

\bib{HH}{book}{
      author={Harjulehto, P.},
      author={H\"{a}st\"{o}, P.},
       title={Orlicz spaces and generalized {O}rlicz spaces},
      series={Lecture Notes in Mathematics},
   publisher={Springer, Cham},
        date={2019},
      volume={2236},
}

\bib{HHL1}{article}{
 author={Harjulehto, P.},
   author={H\"{a}st\"{o}, P.},
   author={Lee, M.},
   title={H\"older continuity of quasiminimizers and $\omega$-minimizers of functionals with generalized Orlicz growth},
   journal={Ann. Sc. Norm. Super. Pisa Cl. Sci.},
   volume={XXII},
   date={2021},
   number={2},
   pages={549--582},
}

\bib{HO}{article}{
   author={H\"{a}st\"{o}, P.},
   author={Ok, J.},
   title={Maximal regularity for local minimizers of non-autonomous functionals},
   journal={J. Eur. Math. Soc., DOI: 10.4171/JEMS/1118},
   }


\bib{Kas1}{article}{
   author={Kassmann, M.},
   title={The theory of De Giorgi for non-local operators},
   language={English, with English and French summaries},
   journal={C. R. Math. Acad. Sci. Paris},
   volume={345},
   date={2007},
   number={11},
   pages={621--624},
}

\bib{Kas2}{article}{
   author={Kassmann, M.},
   title={A priori estimates for integro-differential operators with
   measurable kernels},
   journal={Calc. Var. Partial Differential Equations},
   volume={34},
   date={2009},
   number={1},
   pages={1--21},
}

\bib{KKL}{article}{
   author={Korvenp\"{a}\"{a}, J.},
   author={Kuusi, T.},
   author={Lindgren, E.},
   title={Equivalence of solutions to fractional $p$-Laplace type equations},
   language={English, with English and French summaries},
   journal={J. Math. Pures Appl. (9)},
   volume={132},
   date={2019},
   pages={1\ndash 26},
}

\bib{KKP1}{article}{
   author={Korvenp\"{a}\"{a}, J.},
   author={Kuusi, T.},
   author={Palatucci, G.},
   title={Fractional superharmonic functions and the Perron method for
   nonlinear integro-differential equations},
   journal={Math. Ann.},
   volume={369},
   date={2017},
   number={3-4},
   pages={1443--1489},
}

\bib{KKP2}{article}{
   author={Korvenp\"{a}\"{a}, J.},
   author={Kuusi, T.},
   author={Palatucci, G.},
   title={The obstacle problem for nonlinear integro-differential operators},
   journal={Calc. Var. Partial Differential Equations},
   volume={55},
   date={2016},
   number={3},
   pages={Art. 63, 29},
}


\bib{KMS1}{article}{
   author={Kuusi, T.},
   author={Mingione, G.},
   author={Sire, Y.},
   title={Nonlocal equations with measure data},
   journal={Comm. Math. Phys.},
   volume={337},
   date={2015},
   number={3},
   pages={1317\ndash 1368},
}

\bib{KMS2}{article}{
   author={Kuusi, T.},
   author={Mingione, G.},
   author={Sire, Y.},
   title={Nonlocal self-improving properties},
   journal={Anal. PDE},
   volume={8},
   date={2015},
   number={1},
   pages={57\ndash 114},
}

\bib{Lie1}{article}{
      author={Lieberman, G. M.},
       title={The natural generalization of the natural conditions of Ladyzhenskaya and Ural’tseva for elliptic equations},
        date={1991},
     journal={Comm. Partial Differential Equations},
      volume={16},
      number={2-3},
       pages={311\ndash 361},
}

\bib{Lin}{article}{
   author={Lindgren, E.},
   title={H\"{o}lder estimates for viscosity solutions of equations of
   fractional $p$-Laplace type},
   journal={NoDEA Nonlinear Differential Equations Appl.},
   volume={23},
   date={2016},
   number={5},
   pages={Art. 55, 18},
}

\bib{MSY}{article}{
   author={Mengesha, T.},
   author={Schikorra, A.},
   author={Yeepo, S.},
   title={Calderon-Zygmund type estimates for nonlocal PDE with H\"{o}lder
   continuous kernel},
   journal={Adv. Math.},
   volume={383},
   date={2021},
   pages={Paper No. 107692, 64},
}

\bib{MR}{article}{
      author={Mih\u{a}ilescu, M.},
      author={R\u{a}dulescu, V.},
       title={Neumann problems associated to nonhomogeneous differential
  operators in {O}rlicz-{S}obolev spaces},
        date={2008},
     journal={Ann. Inst. Fourier (Grenoble)},
      volume={58},
      number={6},
       pages={2087\ndash 2111},
}

\bib{Now1}{article}{
   author={Nowak, S.},
   title={Higher H\"{o}lder regularity for nonlocal equations with irregular
   kernel},
   journal={Calc. Var. Partial Differential Equations},
   volume={60},
   date={2021},
   number={1},
   pages={Paper No. 24, 37},
}

\bib{Now2}{article}{
      author={Nowak, S.},
      title={Regularity theory for nonlocal equations with VMO coefficients},
      journal={arXiv:2101.11690},
      year={2021},
}

\bib{Ok18}{article}{
      author={Ok, J.},
       title={Partial Hölder regularity for elliptic systems with non-standard growth},
        date={2018},
     journal={J. Funct. Anal.},
      volume={274},
      number={3},
       pages={723\ndash 768},
}

\bib{Ok1}{article}{
      author={Ok, J.},
       title={Local {H}\"older regularity for nonlocal equations with
  variable powers},
     journal={arXiv:2107.06611},
     year={2021} 
}

\bib{Sil}{article}{
   author={Silvestre, L.},
   title={H\"{o}lder estimates for solutions of integro-differential equations
   like the fractional Laplace},
   journal={Indiana Univ. Math. J.},
   volume={55},
   date={2006},
   number={3},
   pages={1155--1174},
}

\end{biblist}
\end{bibdiv}

\end{document}